\theoremstyle{definition}
\numberwithin{equation}{section}
\newcommand{\dv}{\mathrm{div}\,}
\newtheorem{Theorem}{Theorem}[section]
\newtheorem{Lemma}{Lemma}[section]
\newtheorem{Proposition}{Proposition}[section]
\newtheorem{Def}{Definition}[section]
\newtheorem{Remark}{Remark}[section]
\newtheorem{Corollary}{Corollary}[section]
\newtheorem{Conv}{Convention}[section]
\newtheorem{Example}{Example}[section]
\newcommand{\R}{\mathbb{R}}
\newcommand{\C}{\mathcal{C}}
\newcommand{\D}{\mathbb{D}}
\newcommand{\U}{\mathscr{U}}
\newcommand{\PP}{\mathcal{P}}
\newcommand{\BB}{\mathcal{B}}
\newcommand{\CC}{\mathcal{C}}
\newcommand{\HH}{\mathscr{H}}
\newcommand{\LL}{\mathcal{L}}
\newcommand{\s}{\mathcal{S}}
\newcommand{\pv}{\text{P.V.}}
\newcommand{\lap}{\Delta}
\newcommand{\rcapw}{{\rm Cap}_{p,|t|^\theta}}
\newcommand{\rcapb}{{\rm Cap}_{B^s_p}}
\newcommand{\rcap}{{\rm Cap}}
\newcommand{\diam}[1]{\textrm{diam}({#1})}
\newcommand{\LHMD}{\rm LHMD}
\newcommand{\LHMDs}{{\rm LHMD}^s}
\newcommand{\GHMDs}{{\rm GHMD}^s}
\newcommand{\mnote}[1]{ \marginpar{\color{red}\tiny\em #1}}
\begin{document}
\title[H\"older extension for fractional Laplacian]
 {H\"older extension for fractional Laplacian}

\author{Feng LI}

\address{Department of Mathematics, Uppsala University, Sweden}
\address{Department of Mathematics, University of Cincinnati, USA}
\email{lifenggoo@gmail.com}

\begin{abstract}
In this note, we characterize the sharp boundary condition such that the fractional harmonic extensions with H\"older regularity up to the boundary is globally H\"older continuous. The proofs are based on estimates of fractional harmonic measure decay and uniform fractional fatness of the complement of the domain.
\end{abstract}

\keywords{ }

\maketitle

\tableofcontents

\section{Motivation and purpose}

Let $s\in(0,1)$, $N\geq 2$, and consider the fractional elliptic equation
\begin{equation}\label{equ:main}
    \left\{
    \begin{split}
        &(-\lap)^su=0\quad in\ \Omega\\
        &u=g\quad in\ \R^N\setminus\Omega,
    \end{split}
    \right.
\end{equation}
in a bounded domain $\Omega\in\R^N$, where 
$$
(-\lap)^su(x)=c_{N,s}\pv\int_{\R^N}\frac{u(x)-u(y)}{|x-y|^{N+2s}}\,dy,
$$
where $c_{N,s}$ is a normalization constant.

The solution to the Dirichlet boundary value problem in a bounded domain corresponds with the minimizer of the variational integral
$$
J(u)=\frac{1}{2}\int_{\R^N}\int_{\R^N}\frac{\left|v(x)-v(y)\right|^2}{|x-y|^{N+2s}}\,dxdy
$$
among all functions $v-g\in W^{s,2}_0(\Omega)$, which is defined as a completion of $C^{\infty}_0(\Omega)$ under the norm $W^{s,2}(\R^N)$. 
As a non-local operator, the Dirichlet boundary data for \ref{equ:main} are prescribed on the complement $\R^N\setminus\Omega$, rather than just on the boundary $\partial\Omega$.

Given a function $g$ on $\R^N\setminus\Omega$, we denote by $P_\Omega g$ the Perron Dirichlet solution of $g$ over $\Omega$. 
A boundary point $\xi\in\partial\Omega$ is called {\it regular} if $\lim\limits_{\Omega\ni x\rightarrow\xi}P_\Omega(x)=g(\xi)$ if for every function $g\in C(\R^N)\cap L^{1}_{2s}(\R^N)$, in which the space $L^{p-1}_{sp}(\R^N)$ is the Tail space in nonlocal problems (see e.g. \cite{KKP}). 
Thus if $\Omega$ is regular, then $P_\Omega$ maps $C(\R^N\setminus\Omega)\cap L^{1}_{2s}(\R^N)$ to $\HH^{2s}(\Omega)\cap C(\overline{\Omega})$, where $\HH^{2s}(\Omega)$ is the family of fractional harmonic functions on $\Omega$ with respect to operator $\left(-\Delta\right)^s$. 
From the point of view of the Wiener criterion, the regularity of a boundary point can be characterized using capacity. For the Wiener criterion in a non-local setting, one can refer to very recent results in \cite{Bjorn, KLL}.

Then it's natural to consider whether the better regularity of a boundary function $g$ triggers the better regularity of $P_\Omega g$ and vice versa. If the answer is positive, what would the regularity of boundary look like as the continuity criterion in the Wiener sense? 

In quasilinear elliptic equations, Maz`ya in \cite{Mazya2} and Gariepy $\&$ Ziemer in \cite{GZ} proved that the uniform capacity density condition on regular points would yield the H\"older regularity up to these regular points. For the weighted measure with respect to $p$-harmonic functions one can refer to Theorem 6.44 in \cite{HKM}, in which only the sufficiency of uniformly $p$-fat was considered. While in \cite{LZLH} the authors gave a sufficient condition for boundary H\"older regularity via Lebesgue measure density for elliptic equations and fractional Laplacians. For the almost optimal boundary regularity of fractional operators on smooth enough $\Omega$, we refer the reader to \cite{RS, RS2, IMS}. In our note, we give the critical regularity condition of $\Omega$ to guarantee the H\"older extension on $\Omega$ with H\"older regular boundary of fractional harmonic functions via the equivalence between fractional harmonic measure decay and uniform fractional capacity density (or capacity fatness) of the regular boundary points.

In fact, for classical harmonic functions, Aikawa proved some equivalent conditions for the H\"older extension on regular Euclidean domains in \cite{Aikawa}, and then in \cite{AS} Aikawa and Shanmugalingam extended this property to $p$-harmonic functions  in metric spaces. In this note, we extend the results to the fractional case. More
precisely we study the H\"older extension property in the context of fractional Laplacians with H\"older boundary data $g$. Let $0<\beta\leq\alpha\leq\sigma_0<1$. 
Consider the family $\Lambda_\alpha(\Omega)$ of all bounded $\alpha$-H\"older continuous functions $h$ on $\Omega$ with norm
$$
\|h\|_{\Lambda_\alpha(\Omega)}:=\sup\limits_{x\in\Omega}|h(x)|+\mathop{\sup\limits_{x,y\in\Omega}}\limits_{x\neq y}\frac{|h(x)-h(y)|}{|x-y|^\alpha}<\infty.
$$
We are concerned about the finiteness of the operator norm:
\begin{equation}\label{op:fracmap}    \|P_\Omega\|_{\alpha\rightarrow\beta}:=\mathop{\sup_{h\in\Lambda_\alpha(\R^N\setminus\Omega)}}\limits_{\|h\|_{\Lambda_\alpha(\R^N\setminus\Omega)}\neq 0}\frac{\|P_\Omega h\|_{\Lambda_\beta(\Omega)}}{\|h\|_{\Lambda_{\alpha}(\R^N\setminus\Omega)}}.
\end{equation}

\quad

\section{(Fractional) Perrons solutions and (fractional) harmonic measures}
In this note, we utilize Perron solutions and Caffarelli-Silvestre extension to define fractional harmonic measure. For recent results on Perron solutions for fractional $p$-Laplacians one can refer to \cite{KKP, LL}. However it's more direct to define fractional harmonic measure by Poisson kernel (see e.g. \cite{Bogdan, Wu, CaSi}), we don't use this approach in this note.

\subsection{Weighted Sobolev spaces, Perron solutions, and harmonic measure}
In this section, we mainly discuss the Perron solution associated with degenerate weighted elliptic equations, also the weighted $p$-harmonic measure, derived from Perron solutions (refer to \cite{AS, HKM}). As we know, when $p\neq 2$, the harmonic measure is already not a measure due to the nonlinearity. Although we only use the case of $p=2$, the properties on general $p$ have their own interests.

Let $E\subset\R^{N+1}$ be open. Let Radon measure $\mu$ be $p$-admissible (see \cite{HKM}, section 1.1), associated with a locally integrable, nonnegative function $w$ in $\R^{N+1}$, define by
\begin{equation}\label{def-mu}
    \mu(E)=\int_E w(x)\,dx.
\end{equation}
Thus $d\mu(x)=w(x)\,dx$. 

In the main topics of this note, we are concerned with the case that $w$ is a {\it Muckenhoupt $A_p$ weight} on $\R^{N+1}$, which satisfies for all balls $\BB\subset\R^{N+1}$, 
$$
\int_\BB w(x)\,dx\left(\int_\BB w(x)^{1/(1-p)}\,dx\right)^{p-1}\leq C|\BB|^p,
$$
where $|\BB|$ denotes the $(N+1)$-dimensional Lebesgue measure of $\BB$ and $C>0$ independent of $\BB$. We know that $A_p$ weights are $p$-admissible for the degenerate elliptic equations (\cite{HKM}, Chapter 20). 

Following \cite{HKM}, we denote the weighted Sobolev spaces by $H^{1,p}(E;\mu)$, defined as the completion of $C^\infty(E)$ under the norm
$$
\|h\|_{H^{1,p}(E;\mu)}=\left(\int_E|h|^p\,d\mu\right)^{1/p}+\left(\int_E|\nabla h|^p\,d\mu\right)^{1/p}.
$$
Denote by $H^{1,p}_0(E;\mu)$ the closure of $C^\infty_0(E)$ in $H^{1,p}(E;\mu)$.
Here and in the following contents, for simplicity we write $H^{1,p}(E;\mu)$ and $H^{1,p}_0(E;\mu)$ by $H^{1,p}(E)$ and $H^{1,p}_0(E)$ unless otherwise specified.

On $E\subset\R^{N+1}$ we consider the {\it weighted $p$-Laplace equation}
\begin{equation}\label{equ:gwequ}
    -\lap_{p,w}u=-\dv\left(w(x)|\nabla u|^p\nabla u\right)=0, 
\end{equation}
in which $w(x)$ is $p$-admissible and $1<p<\infty$.
The solutions of equations \ref{equ:gwequ} can be recognized as minimizers of the weighted $p$-Dirichlet integral
$$
\int_E|\nabla u|^p\,d\mu.
$$

A function $h: E\rightarrow\R$ is said to be weighted $p$-harmonic in $E$ if it is a continuous weak solution of \ref{equ:gwequ} in $E$. We denote
$$
\HH^p(E)=\{ h:h\ \text{is} \ \text{weighted}\ p\text{-harmonic}\ \text{in}\ E\}.
$$
By $H^p_E f$ we denote the solution of the weighted $p$-Dirichlet problem on $E$ with boundary data $f\in H^{1,p}(E)$.
We say that a lower semi-continuous function $u$ on $E$ is {\it $p$-superharmonic} in $E$ if $-\infty<u\leq+\infty$, $u$ is not identically $\infty$ in any component of $E$, and $H^p_{D}v\leq u$ in $D$ for every non-empty open set $D\Subset E$ and for all functions $h\in C(\overline{D})\cap \HH^p(D)$ 
the inequality $u\geq h$ on $\partial D$ implies $u\geq h$ in $D$. If $-u$ is $p$-superharmonic, then we say $u$ is $p$-subharmonic.

\begin{Def}
    Given a function $g$ on $\partial E$, we let $\U^p_g$ be the class of all $p$-superharmonic functions $u$ on $E$ bounded below such that $\liminf\limits_{E\ni x\rightarrow \xi}u(x)\geq g(\xi)$ for each $\xi\in\partial E$. The upper Perron solution of $g$ is defined by
    $$
    \overline{P}^p_Eg(x)=\inf_{u\in\U^p_g}u(x)\quad for \ x\in E.
    $$
    We define the lower Perron solution by
    $$
    \underline{P}^p_Eg(x)=\sup_{u\in\LL^p_g}u(x)\quad for\ x\in E,
    $$
    where $\LL^p_g=-\U^p_{-g}$ is the set of all $p$-subharmonic functions $u$ on $E$ bounded above such that $\limsup\limits_{E\ni x\rightarrow\xi}u(x)\leq g(\xi)$ for each $\xi\in\partial E$. If $\overline{P}^p_E g=\underline{P}^p_Eg$, we then say $g$ is {\it resolutive} and write $P^p_Eg$ for this function.
\end{Def}

Now we define $p$-harmonic measure based on Perron solutions.
\begin{Def}\label{def:whm}
   Let $\mu$ be defined as in formula \ref{def-mu}. Given an open set $F\subset\R^{N+1}$ and a Borel set $E\subset\partial F$, by the $p$-{\it harmonic measure} $\omega_p(E,F;\mu)$ we mean the upper Perron solution $\overline{P}^p_F\chi_E$ of the boundary function $\chi_E$ in $F$; see [\cite{AS}, Page 24], [\cite{HKM}, Page 201].
\end{Def}

We know that from the comparison principle, if $h\in\HH^p(E)$ is bounded such that $g(y)=\lim_{x\rightarrow y}h(x)$ exists for each $y\in\partial E$, then $\overline{P}^p_Eg=h=\underline{P}^p_Eg$; then the Perron solution agrees with the classical solution to the $p$-Dirichlet problem provided the latter exists. It's known that every (lower semi-) continuous function on $\partial E$ is resolutive. However, if $h$ is not bounded, then the existence of $g(y)=\lim_{x\rightarrow y}h(x)$ for $y\in\partial E$ is not enough to guarantee that $\overline{P}^p_Eg=h$; a classical counterexample we consider a singular $p$-harmonic function in the punctured ball $B(0,1)\setminus\{0\}$ if $\rcap_{p,\mu}\{0\}=0$.
We say that $\xi\in\partial E$ is {\it $p$-regular} if 
$$
\lim_{E\ni x\rightarrow\xi}P^p_Eg(x)=g(\xi)\quad {\rm for\ all}\ g\in C(\partial E).
$$
Then based on the statement above we get if $\xi\in\partial E$ is $p$-regular and $g$ is bounded on $\partial E$ and continuous at $\xi$, then
$$
\lim_{E\ni x\rightarrow\xi}\overline{P}^p_Eg(x)=\lim_{E\ni x\rightarrow\xi}\underline{P}^p_Eg(x)=g(\xi).
$$

\subsection{Fractional harmonic measure}\label{sec:fhm}
In this section, we use fractional Perron solutions and Caffarelli-Silvestre extension to give the representations of fractional harmonic measures respectively and show their equivalence in the case of $p=2$.  Of course, we can also use the classical Poisson kernel to define harmonic measure in fractional context as in \cite{Bogdan, Wu, CaSi} etc, but it's not our purpose in this note.

It's important to point out that, since our problem is non-local in nature, one does need to a {\it global} assumption in order to obtain such a result. In our case, the global assumption is hidden in the setting that either we consider the boundary data in the whole of $\R^N$ ($\R^{N+1}$) including points at $\infty$. We would see that these assumptions are necessary. One can also refer to similar settings in fractional obstacle problems in e.g. \cite{CSS, BFR}.

First, we define {\it fractional upper Perron solutions}.
\begin{Def}[\cite{KKP}, Definition 2]\label{def:fracperronsol}
    Let $\Omega$ be an open set, and let $0<s<1<p<\infty$. Assume that $g\in L^{p-1}_{sp}(\R^N)$. The upper class $\U^{s,p}_{g,\Omega}$ of $g$ consists of all functions $u$ such that
    \begin{itemize}
        \item[(i)] $u$ is $(s,p)$-superharmonic in $\Omega$,
        \item[(ii)] $u$ is bounded from below in $\Omega$,
        \item[(iii)] $\liminf\limits_{\Omega\ni y\rightarrow x}u(y)\geq$ ess$\limsup\limits_{\R^N\setminus\Omega\ni y\rightarrow x}g(y)$ for all $x\in\partial\Omega$,
        \item[(iv)] $u=g$ almost everywhere in $\R^N\setminus\Omega$.
    \end{itemize}
\end{Def}
The function $\overline{P}^{s,p}_\Omega g:=\inf\{u:u\in\U^{s,p}_{g,\Omega}\}$ is the {\it fractional-$p$ upper Perron solutions} with boundary datum $g$ in $\Omega$.

As in the weighted local framework, we construct the upper class on $\Omega$ of characteristic functions $\U^{s,p}_{\chi_G,\Omega}$ (denoted as $\U^{s,p}_{G,\Omega}$) to introduce the fractional harmonic measure.

\begin{Def}\label{def:fracindexp}
    Let $\Omega\subset\R^N$ be an open set. Let $G\subset\R^N\setminus\Omega$, such that, $G$ or its complement $G^c$ be a bounded set on $\R^N$. Let $\chi_G$ be the characteristic function of $G$. And let $0<s<1<p<\infty$. The {\it upper class} $\U^{s,p}_{G,\Omega}$ of $\chi_G$ consists of all functions $u$ such that
    \begin{itemize}
        \item[(i)] $u$ is $(s,p)$-superharmonic in $\Omega$,
        \item[(ii)] $u\geq 0$ on $\R^N$,
        \item[(iii)] $\liminf\limits_{\Omega\ni y\rightarrow x}u(y)\geq$ ess$\limsup\limits_{\R^N\setminus\Omega\ni y\rightarrow x}\chi_G(y)$ for all $x\in\partial\Omega$,
        \item[(iv)] $u=\chi_G$ almost everywhere in $\R^N\setminus\Omega$.
    \end{itemize}
\end{Def}
We recall that the points at infinity are always considered as part of the ``boundary" in non-local settings in the sense that if $G$ or $\R^N\setminus G$ is unbounded then we set $\chi_G(\infty)=1$ or $0$ (see Remark \ref{rem-boundedassump}). Also if $\Omega$ is unbounded, condition $(iii)$ is satisfied when $x=\infty$. However, through this note, we are concerned with bounded $\Omega$.

\begin{Def}\label{def:frachm}
    The function 
    $$
        \omega_p\left(G,\Omega;s\right)=\overline{P}^{s,p}_{\Omega}\chi_G=\inf\U^{s,p}_{G,\Omega},
    $$
    where the infimum is taken pointwise in $\R^N$.
\end{Def}

 It's easy to see this definition is well-defined by results in e.g. \cite{KKP}, since $\chi_G\in L^{\infty}(\R^N)\subset L^{p-1}_{sp}(\R^N)$. Also, it's obvious that $1\in\U^{s,p}_{G,\Omega}$, then we conclude the following result directly from Theorem $2$ in \cite{KKP}.
 \begin{Theorem}
     $\omega_p\left(G,\Omega;s\right)$ is $(s,p)$-harmonic in $\Omega$.
 \end{Theorem}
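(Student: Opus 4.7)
The plan is to reduce this to Theorem 2 of \cite{KKP}, which asserts that for any $g\in L^{p-1}_{sp}(\mathbb{R}^N)$ the upper Perron solution $\overline{P}^{s,p}_\Omega g$ is $(s,p)$-harmonic in $\Omega$ whenever the upper class $\U^{s,p}_{g,\Omega}$ of Definition \ref{def:fracperronsol} is non-empty. So the proof is essentially a verification that the setup of Definition \ref{def:fracindexp} meets those hypotheses with $g=\chi_G$.

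First, I would check admissibility of the boundary datum: since $0\leq\chi_G\leq 1$, the tail integral $\int_{\mathbb{R}^N}\chi_G(y)^{p-1}(1+|y|)^{-N-sp}\,dy$ is trivially finite, so $\chi_G\in L^\infty(\mathbb{R}^N)\subset L^{p-1}_{sp}(\mathbb{R}^N)$. Next, I would reconcile Definitions \ref{def:fracperronsol} and \ref{def:fracindexp}: the only ``extra'' requirement in \ref{def:fracindexp} over what \ref{def:fracperronsol} with $g=\chi_G$ gives is condition (ii), $u\geq 0$ on all of $\mathbb{R}^N$. But condition (iv) forces $u=\chi_G\geq 0$ a.e.\ outside $\Omega$, and the minimum principle for $(s,p)$-superharmonic functions (which takes into account the tail, i.e.\ values on $\R^N\setminus\Omega$) then propagates this non-negativity into $\Omega$. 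Consequently the two upper classes coincide and so do their infima.

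Third, I would verify that $\U^{s,p}_{G,\Omega}$ is non-empty so that the infimum is not $+\infty$. The constant function $1$ is $(s,p)$-superharmonic, satisfies (i)--(iii), and a suitable modification (e.g.\ set $u=1$ on $\Omega$ and $u=\chi_G$ on $\R^N\setminus\Omega$) produces an admissible comparison function; its $(s,p)$-super\-harmonicity follows from boundedness and the pointwise non-positivity of $(-\Delta)^s u$ on $\Omega$. Having a finite upper barrier (in fact $\omega_p\leq 1$) together with the lower barrier $0$ makes the infimum a bounded Borel function.

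With these ingredients in place, Theorem 2 of \cite{KKP} applies verbatim: the infimum of the non-empty upper class is an $(s,p)$-harmonic function in $\Omega$. The only genuinely non-routine point I anticipate is the careful handling of the minimum principle in step two, because the fractional setting requires controlling the behaviour of $u$ on all of $\R^N\setminus\Omega$ (including the point at infinity, cf.\ Remark \ref{rem-boundedassump}) rather than merely on $\partial\Omega$; but once one appeals to the comparison principle from \cite{KKP} with the constant $0$ as sub-solution, the argument is immediate. Everything else is bookkeeping, which is why the author states this as a corollary rather than a standalone result.
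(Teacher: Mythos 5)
Your proposal follows the same route as the paper: reduce to Theorem 2 of \cite{KKP} after checking $\chi_G\in L^\infty(\R^N)\subset L^{p-1}_{sp}(\R^N)$ and that the relevant upper class is non-empty. That is exactly the paper's (very terse) argument, so the approach matches. Where you go further is in being more careful on two points the paper glosses over, and both additions are genuine and correctly reasoned. First, the constant function $1$ does \emph{not} literally lie in $\U^{s,p}_{G,\Omega}$ when $G\subsetneq\R^N\setminus\Omega$, since condition (iv) forces $u=\chi_G$ a.e.\ off $\Omega$; the paper's ``$1\in\U^{s,p}_{G,\Omega}$'' must be read as the modification you write down (equal to $1$ on $\Omega$ and to $\chi_G$ on $\R^N\setminus\Omega$). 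Second, you explicitly reconcile Definition \ref{def:fracindexp} with Definition \ref{def:fracperronsol}: since (iv) gives $u\geq 0$ off $\Omega$ and the comparison principle with the subsolution $0$ propagates non-negativity into $\Omega$, the extra constraint (ii) in Definition \ref{def:fracindexp} is vacuous and the two upper classes (hence their infima) coincide. This step is actually needed to quote Theorem 2 of \cite{KKP} directly, and the paper simply identifies $\inf\U^{s,p}_{G,\Omega}$ with $\overline{P}^{s,p}_\Omega\chi_G$ without comment.

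One small slip: for the modified barrier $u$ (with $u\equiv 1$ on $\Omega$ and $u=\chi_G\leq 1$ off $\Omega$), for $x\in\Omega$ one has $u(x)-u(y)=1-u(y)\geq 0$, so $(-\Delta)^s u(x)\geq 0$ and likewise $\splap u(x)\geq 0$ since the integrand $|1-u(y)|^{p-2}(1-u(y))$ is non-negative. Under the paper's sign convention, $(s,p)$-superharmonicity corresponds to non-\emph{negativity} of $\splap u$, so your ``non-positivity'' has the wrong sign, although the conclusion (that the modified function is $(s,p)$-superharmonic, hence admissible) is correct.
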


 \begin{Conv}
     When $p=2$, we just omit $p$. For simplicity, we write $\omega_2\left(G,\Omega;s\right)$ as $\omega(G,\Omega;s)$, and $\omega_2\left(G,\Omega;w\right)$ as $\omega(G,\Omega;w)$, where $w$ is an admissible weight defined in context.
 \end{Conv}

\quad

Now we turn to the definition of fractional harmonic measure through the approach of the Caffarelli-Silvestre extension.

Let $s\in(0,1)$ and $\theta=1-2s$. Denote $d\mu:=|t|^\theta\,dtdx$ with $x\in\R^N$ and $t\in\R$. 

In \cite{CS}, the authors established that the operator $(-\Delta)^s$ coincides with the Dirichlet to Neumann operator in the upper half space of $\R^{N+1}$. More precisely, given $u(x)$ defined in $\R^N$, extended it to $u^*(x,t)$ in $\R^{N+1}_+$. Then 
\begin{equation}\nonumber
    \left\{
    \begin{split}
         u^*(x,0)=u(x)&\quad\rm in\ \R^N,\\
         L_\theta u^*(x,t)=0&\quad\rm in\ \R^{N+1}_+,
    \end{split}
    \right.
\end{equation}
where $\R^{N+1}_+=\R^N\times(0,\infty)$ and 
$$
    L_\theta u^*(x,t):=-\dv_{x,t}\left(t^\theta\nabla_{x,t}u^*\right).
$$
This function $u^*$ can be obtained by minimizing the energy 
$$
    \min\left\{\int_{\R^{N+1}_+}t^\theta|\nabla_{x,t}v|^2\,dxdt:v(x,0)=u(x)\right\},
$$
and satisfies
$$
    \lim_{t\downarrow 0}t^\theta u^*(x,t)=(-\Delta)^su(x)\quad\rm in\ \R^N.
$$
By even reflection $u^*$ can be extended to $\R^{N+1}$, that is, $u^*(x,t)=u^*(x,-t)$, then  we have equivalent forms to Definition \ref{def:fracindexp} and Definition \ref{def:frachm} as elements of the class $\U^{|t|^\theta}_{G,\Omega^*}$ satisfying
\begin{equation}\label{equ-cs-global}
    \left\{
    \begin{split}
        &\text{(i)}\ L_\theta u^*(x,t)\geq 0\quad \text{in}\ \Omega^*,\\
        &\text{(ii)}\ u^*(x,t)\geq 0\quad \text{on}\ \R^{N+1},\\
        &\text{(iii)}\ \liminf\limits_{\Omega^*\ni (y,k)\rightarrow (x,0)}u^*(y,k)\geq\chi_{G}(x,0)\quad \text{for\ all}\ (x,0)\in\partial\Omega^*,\\
        &\text{(iv)}\ \lim\limits_{|(x,t)|\rightarrow\infty}u^*(x,t)=\chi_{G}(x,0),
    \end{split}
    \right.
\end{equation}
where $\Omega^*:=\R^{N+1}\setminus(\Omega^c\times \{0\})$, $\Omega^c:=\R^N\setminus\Omega$.

For any set $G\subset\R^N$ in this note, we don't distinguish $G$ and $G\times\{0\}$ unless otherwise specified.

\begin{Def}\label{def:frachmbyc-s}
    The function
    $$
        \omega(G,\Omega^*; |t|^\theta)=\overline{P}^{|t|^\theta}_{\Omega^*}\chi_{G}=\inf\U^{|t|^\theta}_{G,\Omega^*},
    $$
    where the infimum is taken pointwise in $\Omega^*\subset\R^{N+1}$, and where $\overline{P}^{|t|^\theta}_{\Omega^*}\chi_{G}$ is the upper Perron solution with weight $|t|^\theta$.
\end{Def}
Then obviously we know $\omega(G,\Omega^*; |t|^\theta)$ is harmonic in $\Omega^*$ from Chapter 11 in \cite{HKM}. 
 Then by Caffarelli-Silvestre equivalence, we have the following theorem.
\begin{Theorem}\label{thm-cs-frachm}
    $\omega\left(G,\Omega;s\right)=\omega(G,\Omega^*; |t|^\theta)$ on $\Omega\subset\R^N$.
\end{Theorem}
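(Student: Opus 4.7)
The strategy is to exhibit a value-preserving correspondence between the upper class $\U^{s,2}_{G,\Omega}$ defining $\omega(G,\Omega;s)$ and the upper class $\U^{|t|^\theta}_{G,\Omega^*}$ defining $\omega(G,\Omega^*;|t|^\theta)$, implemented by the Caffarelli--Silvestre extension and even reflection. On the one hand, given $u \in \U^{s,2}_{G,\Omega}$, its Caffarelli--Silvestre extension $u^*$ (solving $L_\theta v = 0$ in $\R^{N+1}_+$ with trace $v(\cdot,0) = u$, and then even-reflected across $\{t=0\}$) will be shown to belong to $\U^{|t|^\theta}_{G,\Omega^*}$: condition (i) in \eqref{equ-cs-global} follows from the identification $\lim_{t \downarrow 0}(-t^\theta \partial_t u^*) = c_{N,s}\,(-\Delta)^s u \geq 0$ on $\Omega$, so that the even reflection yields $L_\theta u^* \geq 0$ distributionally on $\Omega^*$; condition (ii) follows from the weighted maximum principle applied to the nonnegative trace; and conditions (iii)--(iv) are inherited from the corresponding conditions in Definition \ref{def:fracindexp} together with the identity $u^*(\cdot,0)=u$ and the asymptotic behavior of $u^*$ at infinity (tending to $0$ or $1$ according as $G$ or $G^c$ is bounded). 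Conversely, given $v \in \U^{|t|^\theta}_{G,\Omega^*}$, I will show that the trace $u(x):=v(x,0)$ lies in $\U^{s,2}_{G,\Omega}$: weighted superharmonicity of $v$ in $\Omega^*$ transfers by the same Dirichlet-to-Neumann identification to $(s,2)$-superharmonicity of $u$ on $\Omega$, while the boundary and tail conditions pass to the trace directly.

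Since the two constructions preserve the values on $\Omega\times\{0\}$ and are mutual inverses, taking pointwise infima gives, for every $x \in \Omega$,
\begin{equation*}
\omega(G,\Omega^*;|t|^\theta)(x,0) \;=\; \inf_{v \in \U^{|t|^\theta}_{G,\Omega^*}} v(x,0) \;=\; \inf_{u \in \U^{s,2}_{G,\Omega}} u(x) \;=\; \omega(G,\Omega;s)(x),
\end{equation*}
which is the claimed identity.

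The main obstacle is the forward direction for genuinely superharmonic (as opposed to harmonic or continuous) data: a function $u \in \U^{s,2}_{G,\Omega}$ is only lower semicontinuous and may be unbounded, so the classical Caffarelli--Silvestre extension of \cite{CS} does not apply verbatim. I would handle this by approximating $u$ from below by an increasing sequence of bounded, continuous $(s,2)$-superharmonic functions (for instance the truncations $\min(u,k)$), extending each one by Caffarelli--Silvestre, and taking the increasing pointwise limit, using that monotone increasing limits of weighted superharmonic functions are weighted superharmonic. The delicate point is condition (iv) of \eqref{equ-cs-global}: because the operator is non-local, the point at $\infty$ is genuinely part of $\partial \Omega^*$, and one must invoke the $L^{1}_{2s}(\R^N)$-tail control on $u$ to ensure the behavior of $u^*$ at infinity (decay to $0$, or approach to $1$) is preserved in the limit, so that the limiting extension really does sit inside $\U^{|t|^\theta}_{G,\Omega^*}$.
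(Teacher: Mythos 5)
The paper offers no proof of Theorem~\ref{thm-cs-frachm}: it is asserted immediately after display~\eqref{equ-cs-global} as following ``by Caffarelli--Silvestre equivalence,'' so you are genuinely filling a gap, not paraphrasing a written argument. Your overall strategy --- produce a value-preserving map from $\U^{s,2}_{G,\Omega}$ into $\U^{|t|^\theta}_{G,\Omega^*}$ and a trace map back, then pass to infima --- is the natural one, and it is essentially the only way to compare two Perron-type harmonic measures. Note, though, that the phrase ``mutual inverses'' overstates what you have and is not needed: the Caffarelli--Silvestre extension of the trace of a generic $L_\theta$-superharmonic $v$ is $L_\theta$-\emph{harmonic} and therefore lies below $v$, not equal to it. The inequality between infima only needs the two one-sided inclusions, which is what your final display actually uses; you should drop the inverse-bijection framing.

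The genuine gap is the backward direction. ``Weighted superharmonicity of $v$ transfers by the same Dirichlet-to-Neumann identification to $(s,2)$-superharmonicity of $v(\cdot,0)$'' is not a Dirichlet-to-Neumann statement: the identity $\lim_{t\downarrow 0}(-t^\theta\partial_t v)=c\,(-\Delta)^s v(\cdot,0)$ holds only when $v$ is $L_\theta$-\emph{harmonic} off $\{t=0\}$, and a general element of $\U^{|t|^\theta}_{G,\Omega^*}$ is merely $L_\theta$-superharmonic there, possibly with positive Riesz mass away from $\{t=0\}$ and possibly not even symmetric in $t$. What you actually need is a comparison-principle argument: first symmetrize $v$ (the class is closed under $v\mapsto \frac12(v(x,t)+v(x,-t))$); then, to verify $(s,2)$-superharmonicity of $u:=v(\cdot,0)$, take a competitor $h$ that is $(s,2)$-harmonic in some $D\Subset\Omega$ and satisfies $h\le u$ on $\R^N\setminus D$, extend $h$ by Caffarelli--Silvestre to an $L_\theta$-harmonic $h^*$ on $D^*$, and compare $h^*\le v$ on $D^*$ using the weighted comparison principle --- which, because $D^*$ is unbounded, requires controlling both functions at infinity via condition~(iv) of~\eqref{equ-cs-global} and the tail integrability of $h$. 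None of this is automatic, and it is precisely the place where the nonlocal-to-local translation has content. Separately, in the forward direction you claim condition~(iii) of~\eqref{equ-cs-global} is ``inherited directly,'' but~\eqref{equ-cs-global}~(iii) demands $\liminf u^*\ge\chi_G$ at \emph{every} point of $\partial\Omega^*=\Omega^c\times\{0\}$, whereas Definition~\ref{def:fracindexp}~(iii) only constrains $x\in\partial\Omega$. At a point $x\in\partial G$ interior to $\Omega^c$, the Caffarelli--Silvestre extension of a function equal to $\chi_G$ a.e.\ near $x$ need not have $\liminf\ge 1$. This mismatch originates in an imprecision of the paper's own formulation of~\eqref{equ-cs-global}, so at minimum you should flag it rather than pass it through silently; the fix is to interpret the boundary inequality in the essential/quasi-everywhere sense, as in the fractional definition, and argue that capacity-zero exceptional sets do not affect the Perron infimum. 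Your identification of the approximation issue for unbounded, merely lower-semicontinuous $u$ is correct and the truncation scheme you propose is sound, provided the tail condition is carried through the monotone limit as you indicate.
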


However, one of the great advantages of the Caffarelli-Silvestre extension lies in the fact that we can transform the non-local problem to be a localized problem proposed in a higher dimension space $\R^{N+1}$ in the sense of Lemma 4.1 in \cite{CS}. 
In the context below, we will use this feature frequently. For preparation, we need to define some abbreviated symbols. 
Now let $G,\Omega$ be any bounded set in $\R^N$ such that $G\subset\Omega^c$ and $\sup\limits_{x\in\Omega,y\in G}\rm dist(x,y)<\infty$. Let 
 $R$ be any real number larger than $\frac{\sup\limits_{x\in\Omega,y\in G}\rm dist(x,y)}{2}$. We write
 \begin{itemize}
     \item[] $B_R(x)$ be any ball in $\R^N$ with center $x\in\R^N$ and radius $R>0$;
     \item[] $\BB_R(x,0)$ be the extension ball of $B_R$ in $\R^{N+1}$ with same centre $(x,0)\in\R^{N+1}$ and same radius;
     \item[] $S_R(x)$ be the sphere of $B_R(x)$, and 
     \item[] $\s_R(x,0)$ be the sphere of $\BB_R(x,0)$.
 \end{itemize}

\begin{Remark}\label{rem-boundedassump}
    Throughout this note, we only consider the bounded case of $G$ or its complement $G^c$ on $\R^N$, since this concerns the way how to define boundary data in the extension approach. However, it's reasonable and interesting to consider the unbounded case of both $G$ and $G^c$, which is open.
\end{Remark}

\begin{Conv}\label{conv-setext}
     Let $\Omega$ be any set in $\R^N$. Then through this note, we write 
     $$
        \Omega^*=\R^{N+1}\setminus (\Omega\times\{0\}).
     $$

 \end{Conv}

\subsection{Weighted $p$-capacity in $\R^{N+1}$ and Besov capacity in $\R^N$}

A domain $E$ with no $p$-irregular boundary point is called a $p$-{\it regular domain}. And from the {\it Kellogg property}, we know the set of all $p$-irregular points on $\partial E$ is of $p$-Capacity zero. See Chapters 8 and 9 in \cite{HKM} for these accounts.

\begin{Proposition}[\cite{AS}, Proposition 2.1]\label{prop:rgset}
    Suppose $\|P^p_E\|_{\alpha\rightarrow\beta}<\infty$ for some $0<\beta\leq\alpha$. Then $E$ is a $p$-regular domain if and only if $\partial E$ has no $p$-trivial points.
\end{Proposition}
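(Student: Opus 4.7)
The equivalence splits into two implications, and the natural strategy in both is to convert between ``Perron convergence at every boundary point'' (the defining property of a $p$-regular domain) and ``continuous extension of H\"older data up to $\partial E$'' (what finiteness of $\|P^p_E\|_{\alpha\to\beta}$ provides). The role of the no-trivial-points hypothesis is to allow free passage between functions defined genuinely pointwise on $\partial E$ and functions defined only modulo sets of $p$-capacity zero; this is where Kellogg's property alone is not enough.

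For the ($\Leftarrow$) direction I would fix an arbitrary $\xi\in\partial E$ and $g\in C(\partial E)$, and uniformly approximate $g$ by $\alpha$-H\"older continuous boundary data $g_n$ obtained by a Whitney-type extension from a compact piece of $\partial E$ into the complement. Non-triviality at $\xi$ guarantees that $E^c$ carries positive $p$-capacity in every neighborhood of $\xi$, so there is no polar obstruction forcing the extension to be multi-valued. The operator-norm hypothesis then gives $\|P^p_E g_n\|_{\Lambda_\beta(E)}\leq\|P^p_E\|_{\alpha\to\beta}\|g_n\|_{\Lambda_\alpha}$, so each $P^p_E g_n$ extends continuously to $\overline{E}$ and attains the value $g_n(\xi)$ at $\xi$. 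The comparison principle for weighted Perron solutions yields $\|P^p_E g - P^p_E g_n\|_{L^\infty(E)}\leq \|g-g_n\|_{L^\infty(\partial E)}$, and passing $n\to\infty$ delivers $\lim_{E\ni x\to\xi}P^p_E g(x)=g(\xi)$, i.e. $p$-regularity at $\xi$.

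For the ($\Rightarrow$) direction I would argue by contrapositive: suppose $\xi_0\in\partial E$ is $p$-trivial, so that there is a ball $\BB$ around $\xi_0$ with $\rcap_p(\BB\cap E^c)=0$. Removability of polar sets for weighted $p$-superharmonic functions (Chapter~7 of \cite{HKM}) lets any bounded Perron solution $P^p_E g$ extend weighted-$p$-harmonically across $\BB\cap E^c$, and the comparison principle identifies it on $E\cap \BB$ with the Perron solution on the enlarged open set $E\cup \BB$, for which $\xi_0$ is an interior point rather than a boundary point. Choosing an $\alpha$-H\"older datum $g$ with $g(\xi_0)=1$ but $g\equiv 0$ on $\partial(E\cup \BB)$ and extended H\"older to the rest of the complement, I obtain $P^p_E g(x)\to 0$ as $E\ni x\to\xi_0$, contradicting $p$-regularity at $\xi_0$.

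The principal obstacle is the ($\Rightarrow$) step: simultaneously constructing a boundary datum that isolates the trivial point $\xi_0$ while retaining globally finite $\alpha$-H\"older norm on the complement, and rigorously identifying the original Perron solution with the extended one by combining removability of polar sets with the comparison principle, rather than merely with a pointwise comparison. The converse direction is by contrast almost a formal consequence of the bounded operator hypothesis, the Whitney extension, and the $L^\infty$-comparison principle for Perron solutions.
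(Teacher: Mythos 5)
Your proposal has the right high-level decomposition, but the key step in the $(\Leftarrow)$ direction is missing. When you write that the bound $\|P^p_E g_n\|_{\Lambda_\beta(E)}\leq\|P^p_E\|_{\alpha\to\beta}\|g_n\|_{\Lambda_\alpha}$ implies that the continuous extension of $P^p_E g_n$ to $\overline E$ ``attains the value $g_n(\xi)$ at $\xi$,'' you are asserting exactly what has to be proved: H\"older continuity on the open set $E$ gives a continuous extension to $\overline E$, but it gives no a priori reason for the \emph{boundary value} of that extension at $\xi$ to coincide with $g_n(\xi)$. That coincidence is the definition of regularity at $\xi$, so the argument is circular at this point. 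The missing ingredient is Kellogg's property: the set of $p$-irregular points of $\partial E$ has $p$-capacity zero, and since $\xi$ is non-trivial, every ball $B_r(\xi)$ contains a subset of $\partial E$ of positive $p$-capacity and hence a regular boundary point $b'$. At $b'$ the Perron solution does attain the boundary datum, and then the $\Lambda_\beta$-bound on $P^p_E g_n$ together with the $\Lambda_\alpha$-bound on $g_n$ lets one transfer this to $\xi$ by letting $r\to 0$. This is precisely what the paper does in the proof of its fractional analogue (Theorem \ref{thm-regtrivialbdr}), working directly with a barrier-type datum $\phi_{a,\sigma}$ instead of approximating a general $g\in C(\partial E)$. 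Your approximation layer is not wrong, but it adds work without supplying the essential Kellogg step, and once Kellogg is in place the barrier argument makes the approximation unnecessary. The $(\Rightarrow)$ direction of your proposal (removability of the polar set $\BB\cap E^c$, identification with the Perron solution on $E\cup\BB$, and a datum isolating $\xi_0$) is sound and is the standard argument; the paper dismisses it as obvious.
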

\begin{Remark}\label{rem:wsneq}
    As shown in section A.2 in \cite{BB} and \cite{Shanmug}, weighted Sobolev spaces with $p$-admissible weights in Euclidean setting are included in Newtonian spaces in metric spaces setting bearing doubling measure and $(1,p)$-Poincar\'e inequality, so in this note, we can directly use the results established in such Newtonian spaces. We will regularly use this property mainly in section \ref{sec-dgclass} concerning the H\"older regularity.
\end{Remark}
Then in light of Proposition \ref{prop:rgset} we get a $p$-regular domain by adding all $p$-trivial points. We may assume that $E$ concerned is $p$-regular in the sequel without any confusion.

Here we pause a little and insert the definition of weighted capacity, of which we borrow the settings in [\cite{HKM}, Chapter 2].

\begin{Def}\label{def:acap}
    Let $\BB\subset\R^{N+1}$ be a ball and $K\subset \BB$ be a compact set. The {\it weighted variational (condenser) capacity} of $K$ with respect to $\BB$ is
    $$
        \rcap_{p}(K,\BB)=\inf_v\int_\BB|\nabla v|^p\,d\mu,
    $$
    where the infimum is taken over all $v\in C^\infty_0(\BB)$ such that $v\geq 1$ on $K$.
\end{Def}

While for all Borel set $E\subset \BB$, we can define
$$
    \rcap_p(E,\BB)=\sup_{\rm compact\ K\subset E}\rcap_p(K,\BB).
$$
A set $E\subset\R^{N+1}$ is of zero $p$-Capacity if for all ball $\BB\subset\R^{N+1}$, there holds
$$
\rcap_p(E\cap \BB, \BB)=0.
$$

Then based on the Definitions \ref{def:acap}, we can directly define the $|t|^\theta$-Muckenhoupt capacity just by letting $w(x)$ be the weights $|t|^\theta$ with $\theta$ to be defined later. So let $\BB\subset\R^{N+1}$ be a ball and $K\subset\BB$ be a compact set. We denote the $|t|^\theta$-Muckenhoup weighted capacity of $K$ with respect to $\BB$ by
$$
    \rcap_{p,|t|^\theta}(K,\BB)=\inf_v\int_\BB|\nabla v|^p\,d\mu_\theta,\quad\text{with\ }\,d\mu_\theta=|t|^\theta\,dtdx,\  t\in\R,\  x\in\R^N,
$$
where the infimum is taken over all $v\in C^\infty_0(\BB)$ such that $v\geq 1$ on $K$ and $v=0$ q.e. on $\R^{N+1}\setminus\BB$.

As we aim to formulate the boundary regularity of fractional Laplacians $(-\Delta)^s$ in $\R^N$ via Caffarelli-Silvestre extension, we need the capacity associated with the Besov spaces $B^s_p(\R^N)$. Following Maz'ya \cite{Mazya3} (p.512) we define the Besov semi-norm by
$$
\|v\|_{B^s_p(\R^N)}:=\left(\int_{\R^N}\int_{\R^N}\frac{|v(x)-v(y)|^p}{|x-y|^{N+sp}}\,dxdy\right)^{1/p}, \quad 0<s<1<p,
$$
and Theorem 1 in \cite{Mazya3} (p.512) asserts that for all $v\in C^\infty_0(\R^N)$,
\begin{equation}\label{global-besov}
    \|v\|_{B^s_p(\R^N)}\simeq\inf_v\left\||t|^{1-s-1/p}\nabla\Tilde{v}\right\|_{L^p(\R^{N+1})},
\end{equation}
where the infimum is taken over all extensions $\Tilde{v}\in C^\infty_0(\R^{N+1})$ of $v$.

\begin{Def}\label{def-besovcap}
    Let $B\subset\R^N$ be a ball and $K\subset B$ be a compact set. The {\it variational (condenser) Besov capacity} of $K$ with respect to $B$ is
    $$
    \rcap_{B^s_p}(K,B)=\inf_v\|v\|^p_{B^s_p(\R^N)},
    $$
    where the infimum is taken over all $v\in C^\infty_0(B)$ such that $v\geq 1$ on $K$. For a Borel set $\Omega\subset B$, we let
    $$
    \rcap_{B^s_p}(\Omega,B)=\sup_{\rm compact\ K\subset\Omega}\rcap_{B^s_p}(K,B).
    $$
    We define a set $\Omega\subset\R^{N}$ of zero $B^s_p$-capacity if for all ball $B\subset\R^{N}$, there holds
    $$
        \rcapb(\Omega\cap B, B)=0.
    $$
\end{Def}

Here and in what follows, we use the notation $X\simeq Y$ if there is a positive constant $C$ independent of $X$ and $Y$ such that $X/C\leq Y\leq CX$. We define the notations of one-sided inequalities $\lesssim$ and $\gtrsim$ in a similar way.

The following lemma relates the weighted capacity $\rcapw$ in $\R^{N+1}$ to a Besov capacity $\rcap_{B^s_p}$ in $\R^N$. Here we identify $K\subset\R^N$ with $K\times\{0\}\subset\R^{N+1}$. For the proof details one can refer to \cite{Bjorn, Mazya3}.
\begin{Lemma}[\cite{Bjorn}, Lemma 2.2]\label{lem-besovcap}
    Let $z_0=(x_0,0)\in\R^{N+1}$ and $K\subset\overline{B_r(x_0)}\subset\R^N$ be a compact set. Then
    $$
    \rcapw(K,B_{2r}(z_0))\simeq\inf_v\|v\|^p_{B^s_p(\R^N)},\quad where\ s=1-\frac{1}{p}-\frac{\theta}{p},
    $$
    and the infimum is taken over all $v\in C^\infty_0(B(x_0,2r))$ such that $v\geq 1$ on $K$.
\end{Lemma}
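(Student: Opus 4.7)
With $s = 1 - \tfrac{1}{p} - \tfrac{\theta}{p}$ we have $|t|^{p(1-s-1/p)} = |t|^\theta$, so Maz'ya's global equivalence \eqref{global-besov} takes the convenient form
\[
\|v\|^p_{B^s_p(\R^N)} \simeq \inf_{\tilde v}\int_{\R^{N+1}} |\nabla \tilde v|^p |t|^\theta\,dt\,dx,
\]
the infimum being taken over all $\tilde v \in C^\infty_0(\R^{N+1})$ with $\tilde v(\cdot,0)=v$. The plan is to deduce both directions of the claim from this single identity, combined with a standard capacity comparison at different scales.

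\emph{The easy direction} $\inf_v\|v\|^p_{B^s_p(\R^N)} \lesssim \rcapw(K,B_{2r}(z_0))$: any test function $V\in C^\infty_0(B_{2r}(z_0))$ with $V\geq 1$ on $K$ produces a trace $v := V(\cdot,0) \in C^\infty_0(B(x_0,2r))$ with $v\geq 1$ on $K$, and the ``$\lesssim$'' half of \eqref{global-besov} applied to the single extension $V$ gives
\[
\|v\|^p_{B^s_p(\R^N)} \lesssim \int_{B_{2r}(z_0)}|\nabla V|^p|t|^\theta\,dt\,dx.
\]
Infimising over $V$ closes this half.

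\emph{The hard direction} $\rcapw(K,B_{2r}(z_0))\lesssim \inf_v\|v\|^p_{B^s_p(\R^N)}$: fix $v\in C^\infty_0(B(x_0,2r))$ with $v\geq 1$ on $K$. I would construct a compactly supported extension $\tilde v \in C^\infty_c(\R^{N+1})$ of $v$ whose support lies in some ball $B_{Cr}(z_0)$, with $C=C(N,s,p)$, and which satisfies $\int_{\R^{N+1}}|\nabla \tilde v|^p|t|^\theta \lesssim \|v\|^p_{B^s_p(\R^N)}$. A convenient explicit choice is the mollifier-type extension
\[
\tilde v(x,t) := \eta(t/r)\,(v * \psi_{|t|})(x),
\]
with $\psi_\tau(y) = \tau^{-N}\psi(y/\tau)$ an $L^1$-normalised approximate identity and $\eta \in C^\infty_c(-1,1)$ equal to $1$ near $0$; then $\operatorname{supp}\tilde v \subset B(x_0,3r)\times[-r,r] \subset B_{Cr}(z_0)$, $\tilde v(\cdot,0)=v$, and the energy bound follows from the standard Besov-extension computations of \cite{Mazya3}. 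Thus $\tilde v$ is admissible for $\rcapw(K,B_{Cr}(z_0))$, yielding $\rcapw(K,B_{Cr}(z_0)) \lesssim \|v\|^p_{B^s_p(\R^N)}$. To conclude, I invoke the standard comparability $\rcapw(K,B_{Cr}(z_0))\simeq \rcapw(K,B_{2r}(z_0))$ for $K\subset \overline{B_r(x_0)}$, valid because $|t|^\theta$ is a Muckenhoupt $A_p$-weight and the underlying measure is doubling with a $(1,p)$-Poincar\'e inequality (see \cite{HKM}, Chapter~2).

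The main obstacle is the hard direction: producing a Maz'ya-type extension that simultaneously achieves the energy bound and has support of radius comparable to $r$. The mollifier construction accomplishes this, but one must separately control (i) the core term $\eta(t/r)(\nabla v * \psi_{|t|})$, via the standard Besov gradient estimate, and (ii) the cutoff term $\eta'(t/r)\,r^{-1}(v*\psi_{|t|})$ localised to the annulus $|t|\sim r$, which requires a weighted Poincar\'e (or Hardy-type) inequality in the $t$-direction. Once this local version of \eqref{global-besov} with controlled support is in hand, all remaining steps reduce to routine facts about $A_p$-weighted capacities.
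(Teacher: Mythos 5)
The paper does not actually prove this lemma --- it states it as borrowed from \cite{Bjorn} and \cite{Mazya3} with the remark ``For the proof details one can refer to \cite{Bjorn, Mazya3}.'' So there is no proof in this paper for you to match; I can only assess your sketch on its own terms, and against the strategy such references employ.

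Your overall plan is the standard and correct one: the trace half of Maz'ya's identity \eqref{global-besov} gives $\inf_v\|v\|^p_{B^s_p}\lesssim\rcapw(K,\BB_{2r}(z_0))$ by restricting a capacitary test function to the slice $t=0$, while the extension half must be \emph{localized}, since Maz'ya's infimum is over all of $C^\infty_0(\R^{N+1})$ and a minimizing extension has no reason to be supported in a ball of radius comparable to $r$. You correctly identify the need for an explicit extension with controlled support followed by the scale-invariance of weighted capacity (which is precisely Lemma \ref{lem-capest}(ii) in the paper, valid because $|t|^\theta$ is $A_p$ and hence doubling with a Poincar\'e inequality). The mollifier-plus-cutoff construction $\tilde v(x,t)=\eta(t/r)(v*\psi_{|t|})(x)$ is a legitimate way to achieve this, and your decomposition into a core term and a cutoff term is the right one.

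Two small points to sharpen. First, the estimate for the cutoff term $r^{-1}\eta'(t/r)(v*\psi_{|t|})$ does \emph{not} come from a Poincar\'e or Hardy inequality in the $t$-direction; it comes from Young's inequality $\|v*\psi_{|t|}\|_{L^p}\leq\|v\|_{L^p}$ combined with the fractional Friedrichs/Poincar\'e inequality $\|v\|^p_{L^p(\R^N)}\lesssim r^{sp}\|v\|^p_{B^s_p(\R^N)}$ for $v\in C^\infty_0(B_{2r}(x_0))$. The exponents then balance exactly: the cutoff term is bounded by $r^{-p}\cdot r^{\theta+1}\cdot\|v\|^p_{L^p}\lesssim r^{-p+\theta+1+sp}\|v\|^p_{B^s_p}=\|v\|^p_{B^s_p}$, since $-p+\theta+1+sp=-p+p(1-s)-1+1+sp=0$. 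A Hardy inequality in $t$ does not apply cleanly here because $\tilde v$ does not vanish at $t=0$. Second, the extension $\tilde v$ is only Lipschitz (not $C^\infty$) across $\{t=0\}$ because of the $|t|$-dependence; this is harmless since the capacity infimum over $C^\infty_0$ functions agrees with the infimum over locally Lipschitz admissible functions, but a careful write-up should say so. With these adjustments, the argument is complete and matches what \cite{Bjorn} and \cite{Mazya3} do.
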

Then we conclude the following equivalence lemma.
\begin{Lemma}[\cite{Bjorn}, Lemma 1.3]\label{lem-equivalentcap}
    Let $\Omega\subset\overline{B_r(x_0)}\subset\R^N$ be a Borel set and $z_0=(x_0,0)\in\R^{N+1}$. Then for $1<p<\infty$ and $s\in(0,1)$ we have
    $$
        \rcapb\left(\Omega, B_{2r}(x_0)\right)\simeq\rcapw\left(\Omega\times\{0\}, \BB_{2r}(z_0)\right),\quad\text{where}\ \theta=p(1-s)-1.
    $$
\end{Lemma}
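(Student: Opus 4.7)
The plan is to reduce the statement for general Borel sets to the compact case already supplied by Lemma \ref{lem-besovcap}, using the inner regularity built into both capacity definitions.

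First I would verify exponent compatibility: the relation $\theta=p(1-s)-1$ in the target is the same as $s=1-\tfrac{1}{p}-\tfrac{\theta}{p}$ appearing in Lemma \ref{lem-besovcap}, so that lemma applies with the same $(s,\theta)$ pair. Then, for any compact set $K\subset\overline{B_r(x_0)}$, Lemma \ref{lem-besovcap} gives
$$
\rcapw\!\bigl(K\times\{0\},\BB_{2r}(z_0)\bigr)\;\simeq\;\inf_v\|v\|^p_{B^s_p(\R^N)},
$$
with the infimum taken over $v\in C^\infty_0(B_{2r}(x_0))$ with $v\geq 1$ on $K$. By Definition \ref{def-besovcap}, this right-hand side is exactly $\rcapb(K,B_{2r}(x_0))$, so the compact case of the desired equivalence is immediate, and the comparison constants depend only on $N$, $p$, $s$ (not on $K$).

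To promote this to a general Borel set $\Omega\subset\overline{B_r(x_0)}$, I would use the inner-regular definitions
$$
\rcapb(\Omega,B_{2r}(x_0))=\sup_{K\subset\Omega}\rcapb(K,B_{2r}(x_0)),\qquad\rcapw(\Omega\times\{0\},\BB_{2r}(z_0))=\sup_{K'\subset\Omega\times\{0\}}\rcapw(K',\BB_{2r}(z_0)),
$$
where the suprema range over compact subsets. The key point is the bijection $K\leftrightarrow K\times\{0\}$ between compact subsets of $\Omega\subset\R^N$ and compact subsets of $\Omega\times\{0\}\subset\R^{N+1}$; since $\{0\}$ is closed in $\R$ and the embedding $\R^N\hookrightarrow\R^{N+1}$ is a homeomorphism onto its image, compactness is preserved in both directions. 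Applying the compact-case equivalence to each $K$ with uniform constants and taking the supremum on both sides yields the stated $\simeq$ relation.

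I do not expect a genuine obstacle here, since the analytical content lies in Lemma \ref{lem-besovcap}; the only thing to confirm is that the constants in the compact-case equivalence are independent of $K$, which one reads off directly from the proof in \cite{Bjorn} (the extension/restriction operators involved in \eqref{global-besov} are universal). If one wanted a self-contained argument without invoking inner regularity at the very end, one could alternatively approximate $\Omega$ from inside by an increasing sequence of compact sets $K_j\nearrow\Omega$ and pass to the limit in both capacities simultaneously, using monotonicity and the uniform constant from Step 1.
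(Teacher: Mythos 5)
Your proof is correct, and there is nothing in the paper to compare it against: the paper does not prove Lemma~\ref{lem-equivalentcap} but simply cites it as Björn's Lemma~1.3. Your reduction — algebraic check that $\theta=p(1-s)-1$ is the same relation as $s=1-\tfrac1p-\tfrac\theta p$ in Lemma~\ref{lem-besovcap}, then identifying the right side of Lemma~\ref{lem-besovcap} with $\rcapb(K,B_{2r}(x_0))$ via Definition~\ref{def-besovcap}, then passing from compact $K$ to Borel $\Omega$ using the inner-regular definitions of both capacities together with the bijection $K\leftrightarrow K\times\{0\}$ and the $K$-independence of the comparison constant — is the natural route and is exactly how one would expect Björn's Lemma~1.3 to follow from her Lemma~2.2. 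No gaps.
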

We also give the following estimated results for Besov capacities.
\begin{Lemma}[\cite{Bjorn}, Lemma 2.4]\label{lem-besovcapest}
    Let $0<\rho\leq r$, $x_0\in\R^N$, $z_0=(x_0,0)$, and $s=1-1/p-1/a$, then
    \[
    \begin{split}
        \rcapw\left(\BB_{\frac{1}{2}\rho}(z_0), \BB_{2r}(z_0)\right)
        &\lesssim\rcapb\left(\overline{B_\rho(x_0)},B_{2r}(x_0)\right)\\
        &\lesssim\rcapw\left(\BB_{\rho}(z_0), \BB_{2r}(z_0)\right).
    \end{split}
    \]
    In particular, $\rcapb\left(\overline{B_{cr}(x_0)},B_{2r}(x_0)\right)\simeq r^{N-sp}$ with comparison constant depending only on $N$, $s$, $p$, and $c\in(0,1]$.
\end{Lemma}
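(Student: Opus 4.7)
The plan is to use Lemma \ref{lem-besovcap} as a bridge that identifies the Besov capacity of $\overline{B_\rho(x_0)}$ with the $|t|^\theta$-weighted capacity of the flat $N$-dimensional disc $\overline{B_\rho(x_0)}\times\{0\}$ sitting inside $\BB_{2r}(z_0)$. Since $\rho\le r$, applying Lemma \ref{lem-besovcap} with $K=\overline{B_\rho(x_0)}\subset\overline{B_r(x_0)}$ yields $\rcapw(\overline{B_\rho(x_0)}\times\{0\},\BB_{2r}(z_0))\simeq\rcapb(\overline{B_\rho(x_0)},B_{2r}(x_0))$. The right-hand inequality of the lemma then follows at once from the monotonicity of the weighted capacity, since $\overline{B_\rho(x_0)}\times\{0\}\subset\BB_\rho(z_0)$.

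For the left-hand inequality $\rcapw(\BB_{\rho/2}(z_0),\BB_{2r}(z_0))\lesssim\rcapb(\overline{B_\rho(x_0)},B_{2r}(x_0))$, my approach is to produce an admissible test function for $\BB_{\rho/2}(z_0)$ from the capacitary potential of the flat disc. Let $u$ be the $|t|^\theta$-capacitary potential of $\overline{B_\rho(x_0)}\times\{0\}$ in $\BB_{2r}(z_0)$; after the standard truncation $u\mapsto\min(u,1)$ (which preserves admissibility and does not increase the weighted energy) we may assume $0\le u\le 1$ with $u=1$ q.e.\ on the disc. The crux is to show $u\ge c_0$ on $\BB_{\rho/2}(z_0)$ for some $c_0=c_0(N,p,s)>0$. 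To see this, let $w$ solve the weighted Dirichlet problem $L_\theta w=0$ in $\BB_\rho(z_0)\setminus(\overline{B_\rho(x_0)}\times\{0\})$ with $w=1$ on the disc and $w=0$ on $\partial\BB_\rho(z_0)$. Since $u\ge 0=w$ on $\partial\BB_\rho(z_0)$ and $u=1=w$ q.e.\ on the disc, the comparison principle for weighted harmonic functions gives $u\ge w$ throughout $\BB_\rho(z_0)$. By the dilation invariance $(x,t)\mapsto((x-x_0)/\rho,t/\rho)$ of $L_\theta$, $w$ reduces to a single $\rho$-independent profile $W$ on the unit configuration, and $c_0:=\inf_{\BB_{1/2}(0)}W>0$ by the strong maximum principle and the continuity of $W$. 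Consequently $u/c_0$ is admissible for $\rcapw(\BB_{\rho/2}(z_0),\BB_{2r}(z_0))$, whence $\rcapw(\BB_{\rho/2},\BB_{2r})\le c_0^{-p}\rcapw(\overline{B_\rho}\times\{0\},\BB_{2r})\simeq\rcapb(\overline{B_\rho},B_{2r})$.

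The ``in particular'' statement follows from scaling. Under $v\mapsto v(r\,\cdot)$, the Besov semi-norm transforms as $\|v(r\,\cdot)\|^p_{B^s_p(\R^N)}=r^{N-sp}\|v\|^p_{B^s_p(\R^N)}$, so the correspondence between admissible test functions gives $\rcapb(\overline{B_{cr}(x_0)},B_{2r}(x_0))=r^{N-sp}\rcapb(\overline{B_c(0)},B_2(0))$, and $\rcapb(\overline{B_c(0)},B_2(0))$ is a finite positive constant depending only on $N,s,p,c$. The main obstacle is the comparison step above: justifying the comparison principle for $L_\theta$ across the codimension-one singular set $\overline{B_\rho(x_0)}\times\{0\}$ rests on the standard potential theory for the Muckenhoupt $A_p$ weight $|t|^\theta$ developed in \cite{HKM}, while extracting a positive, $\rho$-independent lower bound $c_0$ requires the dilation invariance of $L_\theta$ together with the strong maximum principle.
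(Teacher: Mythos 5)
The paper itself does not prove this lemma; it cites it directly from \cite{Bjorn}, so there is no internal proof to compare against. Your argument is a reasonable and essentially correct route. The right-hand inequality is an immediate consequence of Lemma~\ref{lem-besovcap} plus monotonicity of the weighted capacity, exactly as you say. The left-hand inequality via the capacitary potential of the flat disc and the locally defined auxiliary function $w$ is a sensible potential-theoretic approach, and the ``in particular'' statement indeed follows from dilation scaling.

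Two points need tightening. First, the crux is, as you flag, $c_0:=\inf_{\BB_{1/2}(0)}W>0$. But $W$ is defined only in $\BB_1(0)\setminus\bigl(B_1(0)\times\{0\}\bigr)$, so $\overline{\BB_{1/2}(0)}\setminus\bigl(B_{1/2}(0)\times\{0\}\bigr)$ is not compact in the domain of definition, and ``strong maximum principle plus continuity'' is not by itself enough. You additionally need that $W$ attains its boundary value $1$ continuously at the interior points of the flat disc $B_1(0)\times\{0\}$, i.e.\ that those are $|t|^\theta$-regular boundary points of $\BB_1(0)\setminus\bigl(B_1(0)\times\{0\}\bigr)$. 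This is true (the flat disc is uniformly $|t|^\theta$-fat, so the Wiener criterion applies, or one can build an explicit barrier), but it should be stated, since it is precisely what converts the qualitative positivity of $W$ into the uniform bound $c_0>0$; without it the argument is incomplete. Second, a cosmetic but real error: if $u(x)=v(rx)$ then $\|u\|^p_{B^s_p}=r^{sp-N}\|v\|^p_{B^s_p}$, not $r^{N-sp}$; your final identity $\rcapb\bigl(\overline{B_{cr}(x_0)},B_{2r}(x_0)\bigr)=r^{N-sp}\rcapb\bigl(\overline{B_c(0)},B_2(0)\bigr)$ is nonetheless correct, because it is $v\mapsto v((\cdot-x_0)/r)$ that maps the unit-scale admissible class to the $r$-scale one.
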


For some geometrical characterization of higher boundary regularity, we need information on measuring the boundary itself.

\begin{Def}\label{def:capdensity}
    Set $G$ satisfies the {\it uniformly fractional $p$-fat} or the {\it fractional $p$-capacity density condition} if there exist constants $C_0>0$ and $r_0>0$ such that
    \begin{equation}\label{equ:fat}
        \frac{\rcapb(G\cap B_r(a), B_{2r}(a))}{\rcapb(B_r(a), B_{2r}(a))}\geq C_0
    \end{equation}
    whenever $a\in G$ and $0<r<r_0$.
\end{Def}
See \cite{Lewis} for more details on uniform fatness in the Euclidean setting, and \cite{BMS} on metric spaces.

\begin{Remark}
    In \cite{LZLH} the authors gave a sufficient condition for boundary H\"older regularity via volume density condition for elliptic equations and fractional Laplacians, i.e., once the boundary point $a\in\partial G$ satisfies
$$
    \frac{\mu(G\cap B_r(a))}{\mu(B_r(a))}\geq C>0, \quad for\ r_0>r>0,
$$
there holds the H\"older regularity up to point $a$ for harmonic functions in $E$. We can see the volume density condition satisfies the capacity density condition (see Proposition \ref{prop-lebeguetocap}), but the inverse is not the case. 
\end{Remark}

\begin{Proposition}\label{prop-lebeguetocap}
    Let $\R^N\setminus\Omega$ satisfy the volume density condition, then it is fractional uniformly fat, and hence $\|P_\Omega\|_{\alpha\rightarrow\alpha}<\infty$ for some $\alpha\in(0,\sigma_0]$, where $\sigma_0$ is defined in Definition \ref{def-sigma0}.
\end{Proposition}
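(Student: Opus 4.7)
The plan is to split the statement into two parts: first, derive the uniform fractional fatness of $G:=\R^N\setminus\Omega$ in the sense of Definition \ref{def:capdensity} from the volume density hypothesis; second, invoke the equivalence between fractional fatness and boundedness of $P_\Omega$ on H\"older classes established earlier in this note (the fractional analogue of \cite{AS}, Theorem 2.5).

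For the first step, fix $a\in G$ and $0<r<r_0$. Since $p=2$, Lemma \ref{lem-besovcapest} gives $\rcapb(\overline{B_r(a)},B_{2r}(a))\simeq r^{N-2s}$, so it suffices to produce a matching lower bound $\rcapb(G\cap B_r(a),B_{2r}(a))\gtrsim r^{N-2s}$ with a constant depending only on the density constant, $N$, and $s$. My plan is to establish and apply a Besov capacity-measure inequality
$$
\rcapb(E,B_{2r}(a))\gtrsim |E|^{(N-2s)/N},\qquad E\subset\overline{B_r(a)},
$$
valid whenever $2s<N$, which is automatic here since $N\geq 2$ and $s\in(0,1)$. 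To derive it I take any admissible $v\in C^\infty_0(B_{2r}(a))$ with $v\geq 1$ on $E$, use the Sobolev embedding $B^s_2(\R^N)\hookrightarrow L^{2N/(N-2s)}(\R^N)$ to estimate
$$
|E|^{(N-2s)/(2N)}\leq \|v\|_{L^{2N/(N-2s)}(\R^N)}\lesssim \|v\|_{B^s_2(\R^N)},
$$
square and take the infimum over $v$. Inserting the volume density bound $|G\cap B_r(a)|\geq C\,r^N$ gives the required lower bound, so the ratio in \eqref{equ:fat} is bounded below by a positive constant independent of $a$ and $r$. An equivalent and perhaps cleaner route is to pass through the Caffarelli-Silvestre extension: by Lemma \ref{lem-equivalentcap} the Besov capacity is comparable to the weighted Muckenhoupt capacity $\rcapw(G\times\{0\},\BB_{2r}(a,0))$ with $\theta=1-2s$, and the analogous capacity-measure inequality for the $A_2$ weight $|t|^\theta$ is available from the theory in \cite{HKM}.

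Once uniform fractional fatness is in hand, the assertion $\|P_\Omega\|_{\alpha\to\alpha}<\infty$ for some $\alpha\in(0,\sigma_0]$ follows from the main equivalence theorem of this note, which upgrades fractional fatness to a fractional harmonic measure decay estimate and thence to a H\"older extension estimate with exponent controlled by $\sigma_0$ from Definition \ref{def-sigma0}. I expect the main obstacle to lie in the capacity-measure step: although the Sobolev embedding supplies the correct scaling, one must verify the inequality for the variational capacity with compactly supported admissible test functions and confirm that the comparability constants depend only on $N$ and $s$; the weighted-extension reformulation via Lemma \ref{lem-equivalentcap} is helpful for making this rigorous, since the $|t|^\theta$-weighted Sobolev and capacity-measure inequalities are already known in the literature.
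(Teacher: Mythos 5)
Your argument is correct, and it fills in a genuine gap that the paper's own proof leaves implicit. The paper's proof is a one-liner pointing to Corollary~2.6 in \cite{AS}, Lemma~\ref{lem-fatequal}, and Theorem~\ref{thm:main2}. Corollary~2.6 of \cite{AS} is the metric-measure-space statement that measure density implies capacity density, where the measure entering the density ratio is the same one that builds the Newtonian $p$-capacity. Applied in $\R^{N+1}$ with $d\mu_\theta=|t|^\theta\,dx\,dt$, it does \emph{not} directly apply here, because the lifted complement $(\R^N\setminus\Omega)\times\{0\}$ has zero $\mu_\theta$-measure; and applied in $\R^N$ with Lebesgue measure, it produces Newtonian $W^{1,2}$-fatness rather than $B^s_2$-fatness. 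What the proposition actually needs is exactly the Besov capacity-measure inequality $\rcapb(E,B_{2r})\gtrsim |E|^{(N-2s)/N}$ for $E\subset\overline{B_r}$, which you derive from the fractional Sobolev embedding $B^s_2(\R^N)\hookrightarrow L^{2N/(N-2s)}(\R^N)$ (valid since $N\geq 2>2s$), combined with the two-sided normalization $\rcapb(B_r,B_{2r})\simeq r^{N-2s}$ from Lemma~\ref{lem-besovcapest}. Your derivation of this inequality, the scaling check against the volume density assumption, and the final appeal to Theorem~\ref{thm:main2} together with the last clause of Theorem~\ref{thm:main1} (to bring the exponent into $(0,\sigma_0]$) are all sound. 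This is a more explicit and arguably more defensible proof than the paper's.

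One caveat: your suggested ``cleaner alternative route'' through the Caffarelli--Silvestre extension --- transferring via Lemma~\ref{lem-equivalentcap} and then invoking a weighted $A_2$ capacity-measure inequality from \cite{HKM} --- is not as straightforward as you indicate. The capacity-measure inequality in \cite{HKM} for $\rcapw$ bounds capacity from below by $\mu_\theta$-measure, and $\mu_\theta(K\times\{0\})=0$ for every $K\subset\R^N$, so it yields nothing. What you would need is a trace-type estimate bounding $\rcapw(K\times\{0\},\BB_{2r})$ from below by a power of the $N$-dimensional Lebesgue measure $|K|$, and establishing that is essentially equivalent (via Lemma~\ref{lem-equivalentcap}) to the Besov embedding argument you already ran. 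So the alternative route is a detour that circles back to your primary argument; it is not an independent shortcut. Since you present the Sobolev-embedding route as primary and carry it out in full, this does not affect the correctness of the proposal.
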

We give the proof of this proposition at the end of this note.

\medskip

However, before going on, we should exclude some singular phenomena. We say that a property holds q.e. (quasi-everywhere) if it holds outside a polar set, whose capacity is zero.
As we all know, in a punctured ball $\Tilde{B}=B_1(0)\setminus\{0\}$, the original point is irregular, and yet $\|P_{\Tilde{B}}\|_{\alpha}<\infty$ for all $0<\alpha<1$. There is the isolated boundary point that triggered this strange phenomenon. That's why we should exclude these (essentially) isolated points.

Now let's go back to our original motivation, i.e., whether better continuity of $g$ on the complement of $\Omega$ results in a better regularity of $P_\Omega g$, also, whether there is a similar equivalence between the finiteness of $\|P_\Omega g\|_{\alpha\rightarrow\beta}$ (defined as in \ref{op:fracmap}, here $0<\alpha\leq\beta\leq 1$) and regularity of $\partial\Omega$ as in the sense of classical Wiener criterion. In fact, by \cite{Aikawa, AS} this is not always true. Then to avoid such pathological cases we need to introduce {\it fractional trivial boundary point}, i.e., for $a\in\partial\Omega$ being a  fractional trivial boundary point of $\Omega$, if there is $r>0$ such that $\rcapb\left(\partial\Omega\cap B_r(a)\right)=0$.

\begin{Def}
    The point $a\in\partial\Omega$ is a {\it trivial boundary point} if there is $r>0$ such that $B_r(a)\setminus\Omega$ is polar.
\end{Def}
We can easily see that the original point in the example above is a trivial boundary point. We have the following description of the trivial boundary point in the same sense as Proposition 1 in \cite{Aikawa}. We omit the proof, which is just by the definition of regular and irregular points.
\begin{Proposition}\label{prop-trivialbdpoint}
    Let $G$ be the set of all trivial boundary points of $\Omega$. Then
    \begin{itemize}
        \item[(i)] $G$ is polar;
        \item[(ii)] $\Tilde{\Omega}=\Omega\cup G$ is a domain without trivial boundary points;
        \item[(iii)] $\Tilde{\Omega}=\{x\in\R^N: \text{there is $r>0$ such that $B_r(x)\setminus\Omega$ is polar}\}$;
        \item[(iv)] $\partial\Tilde{\Omega}\subset\partial\Omega$;
        \item[(v)] $P_\Omega f=P_{\Tilde{\Omega}}\Tilde{f}$ on $\Omega$, where $\Tilde{f}=f|_{\R^N\setminus\Tilde{\Omega}}$.
    \end{itemize}
\end{Proposition}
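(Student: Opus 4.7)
The plan is to dispatch the items in the order (iii), (i), (ii), (iv), (v), because (iii) yields a convenient characterization of $\Tilde{\Omega}$ that streamlines the rest. For (iii), the forward inclusion is immediate: $x\in\Omega$ admits some $B_r(x)\subset\Omega$, and $x\in G$ is the definition. For the reverse inclusion, suppose $B_r(x)\setminus\Omega$ is polar; if $x\notin\Omega$, then $x$ must lie in $\partial\Omega$, since otherwise a small ball around $x$ would itself be polar, contradicting the lower bound $\rcapb(\overline{B_\rho(x)},B_{2\rho}(x))\simeq\rho^{N-sp}$ from Lemma \ref{lem-besovcapest}; hence $x\in G$. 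For (i), I would cover $G$ by balls $\{B_{r_a/2}(a)\}_{a\in G}$ with $r_a$ chosen so that $B_{r_a}(a)\setminus\Omega$ is polar, extract a countable subcover by the Lindel\"of property of $\R^N$, and invoke countable subadditivity of the Besov capacity (inherited via Lemma \ref{lem-equivalentcap} from the $|t|^\theta$-weighted Muckenhoupt capacity).

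For (ii), openness of $\Tilde{\Omega}$ is immediate from (iii) since $B_{r-|x-y|}(y)\setminus\Omega\subset B_r(x)\setminus\Omega$, and connectedness follows from $G\subset\partial\Omega\subset\overline{\Omega}$, which sandwiches $\Tilde{\Omega}$ between the connected $\Omega$ and $\overline{\Omega}$. The absence of trivial boundary points of $\Tilde{\Omega}$ is by contradiction: if $x\in\partial\Tilde{\Omega}$ were trivial with $B_r(x)\setminus\Tilde{\Omega}$ polar, then
$$
B_r(x)\setminus\Omega=(B_r(x)\setminus\Tilde{\Omega})\cup(B_r(x)\cap G)
$$
is a union of two polar sets by (i), hence polar, which by (iii) forces $x\in\Tilde{\Omega}$, a contradiction. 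Item (iv) then follows from $\overline{\Tilde{\Omega}}=\overline{\Omega\cup G}=\overline{\Omega}$ combined with $\Omega\subset\Tilde{\Omega}$, giving $\partial\Tilde{\Omega}=\overline{\Tilde{\Omega}}\setminus\Tilde{\Omega}\subset\overline{\Omega}\setminus\Omega=\partial\Omega$.

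For (v), the essential ingredient is removability of polar sets for bounded $(s,2)$-superharmonic functions: since $G$ is polar by (i) and has Lebesgue measure zero (as sets of zero $B^s_2$-capacity have Hausdorff dimension at most $N-2s$), the function $P_\Omega f$ extends uniquely across $G$ to an $(s,2)$-harmonic function on $\Tilde{\Omega}$ whose almost-everywhere boundary values on $\R^N\setminus\Tilde{\Omega}$ agree with $\Tilde{f}$; the comparison principle from \cite{KKP} then identifies this extension with $P_{\Tilde{\Omega}}\Tilde{f}$. This is also the main obstacle: items (i)--(iv) are essentially topology combined with subadditivity and the positive size of the Besov capacity, whereas (v) genuinely requires a removability theorem for $(s,2)$-superharmonic functions across polar sets, which must be drawn carefully from \cite{KKP, LL} because the nonlocal boundary data on $\R^N\setminus\Omega$ and $\R^N\setminus\Tilde{\Omega}$ differ precisely on the polar set $G$, and one has to ensure that the distinct Perron upper classes $\U^{s,2}_{f,\Omega}$ and $\U^{s,2}_{\Tilde{f},\Tilde{\Omega}}$ produce the same infimum on $\Omega$.
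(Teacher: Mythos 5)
Items (i)--(iv) are argued correctly and essentially by the definitions, in the same spirit as Aikawa's Proposition 1 to which the paper defers: (iii) is topology plus the lower bound $\rcapb(\overline{B_\rho},B_{2\rho})\simeq\rho^{N-2s}>0$; (i) is Lindel\"of plus countable subadditivity (which indeed transfers to $\rcapb$ via Lemma~\ref{lem-equivalentcap} from the HKM relative capacity); (ii) and (iv) are the standard topological consequences, correctly executed. Note also that the disjoint decomposition $\partial\Omega=\partial\Tilde{\Omega}\sqcup G$, which you implicitly use, is worth recording since it is needed again in (v).

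For (v) there is a genuine gap. You invoke removability and then say the ``comparison principle identifies this extension with $P_{\Tilde{\Omega}}\Tilde{f}$,'' but the paper itself warns, via the punctured-ball example immediately before this proposition, that a bounded harmonic function with the ``right'' boundary values need not be the Perron solution. Matching $\Tilde{f}$ a.e.\ on $\R^N\setminus\Tilde{\Omega}$ is therefore not, on its own, a ground for identifying the extended function with $P_{\Tilde{\Omega}}\Tilde{f}$. The correct argument compares the two Perron upper classes directly. One inclusion, $\U^{s,2}_{\Tilde{f},\Tilde{\Omega}}\subset\U^{s,2}_{f,\Omega}$, is straightforward: the superharmonicity and lower bound restrict from $\Tilde{\Omega}$ to $\Omega\subset\Tilde{\Omega}$; condition (iv) transfers because $G$ has Lebesgue measure zero; and condition (iii) at $x\in\partial\Tilde{\Omega}$ transfers because the $\liminf$ over the smaller set $\Omega$ only increases, while the essential $\limsup$ over $\R^N\setminus\Omega$ and over $\R^N\setminus\Tilde{\Omega}$ coincide (they differ on the null set $G$), and at $x\in G$ the right-hand side of (iii) is vacuous since $\R^N\setminus\Omega$ has measure zero near a trivial point. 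This gives $\overline{P}^{s,2}_\Omega f\leq\overline{P}^{s,2}_{\Tilde{\Omega}}\Tilde{f}$ on $\Omega$. The reverse inequality is where the real work lies: one must take a bounded $u\in\U^{s,2}_{f,\Omega}$, extend it across the relatively closed polar set $G$ to an $(s,2)$-superharmonic function on $\Tilde{\Omega}$ (the removability theorem), and verify that the extension still belongs to $\U^{s,2}_{\Tilde{f},\Tilde{\Omega}}$. Your write-up names removability as ``the essential ingredient'' and raises the upper-class issue in the last sentence, but does not actually carry out either the extension or the verification; in particular the claim that ``the comparison principle identifies'' the extension with $P_{\Tilde{\Omega}}\Tilde{f}$ skips precisely the step that makes (v) nontrivial. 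The removability step itself also needs a citation or proof in the fractional setting: it can be obtained either from the nonlocal Perron theory in~\cite{KKP,LL} or, as the rest of this paper does systematically, by passing through the Caffarelli--Silvestre extension and the $|t|^\theta$-weighted removability theorem in~\cite{HKM}, but this should be made explicit rather than gestured at.
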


Then we have the following result.
\begin{Theorem}\label{thm-regtrivialbdr}
    Let $\|P_\Omega\|_\alpha<\infty$ for some $\alpha>0$. Then $\Omega$ is regular if and only if $\Omega$ has no trivial boundary point.
\end{Theorem}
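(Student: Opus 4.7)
My plan is to establish both implications separately. The direction \emph{regular $\Rightarrow$ no trivial boundary points} admits a short contrapositive argument, while \emph{no trivial boundary points $\Rightarrow$ regular} is where the standing hypothesis $\|P_\Omega\|_\alpha<\infty$ does the heavy lifting; I would chain through fractional fatness and the fractional Wiener criterion to close it.

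For the forward direction, suppose $a\in\partial\Omega$ is a trivial boundary point, so there is $r>0$ with $B_r(a)\setminus\Omega$ polar. I would select a bump $g\in C(\R^N)\cap L^1_{2s}(\R^N)$ with $g(a)=1$, $0\leq g\leq 1$, and $\mathrm{supp}(g)\subset B_{r/2}(a)$. Restricted to $\R^N\setminus\Omega$, the functions $g$ and $0$ differ only on the polar set $B_{r/2}(a)\setminus\Omega$, so by the standard fact that Perron solutions are insensitive to modifications of the boundary datum on polar sets, $P_\Omega g\equiv P_\Omega 0\equiv 0$ on $\Omega$. Then $\lim_{\Omega\ni x\to a}P_\Omega g(x)=0\neq 1=g(a)$, so $a$ is not a regular point, contradicting the hypothesis. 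This half of the proof does not use $\|P_\Omega\|_\alpha<\infty$ at all.

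For the backward direction, assume $\Omega$ has no trivial boundary points and $\|P_\Omega\|_\alpha<\infty$. My goal is to show that $\R^N\setminus\Omega$ is uniformly fractionally fat at every $a\in\partial\Omega$ in the sense of \eqref{equ:fat}, after which the fractional Wiener criterion of \cite{Bjorn, KLL} yields regularity at $a$. To produce the density bound at a fixed $a$, I would test the operator norm against the Hölder datum $\phi_{a,\alpha}(y)=\min(|y-a|^\alpha,1)$. The operator bound gives $P_\Omega\phi_{a,\alpha}(x)\lesssim |x-a|^\alpha$ on $\Omega$, and comparing with characteristic functions of annular complements $B_{2r}(a)\setminus B_r(a)$ via the maximum principle yields the decay $\omega(\R^N\setminus B_r(a),\Omega;s)(x)\lesssim (|x-a|/r)^\alpha$ for the fractional harmonic measure. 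Through the equivalence between harmonic measure decay and capacity density developed in Section \ref{sec:fhm} together with the Caffarelli--Silvestre transfer in Theorem \ref{thm-cs-frachm} and the capacity comparisons in Lemmas \ref{lem-equivalentcap}--\ref{lem-besovcapest}, this decay converts to the density estimate \eqref{equ:fat} at $a$. The non-triviality of $a$ is precisely what prevents the capacity $\rcapb(B_r(a)\setminus\Omega,B_{2r}(a))$ from vanishing at some small scale, so the lower bound obtained is genuine rather than vacuous.

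The main obstacle I anticipate is the final link in the backward direction: translating the Hölder operator bound — which a priori only controls oscillation of $P_\Omega h$ inside $\Omega$ — into a quantitative density statement about the geometry of $\R^N\setminus\Omega$ at every boundary point. The reduction through annular harmonic measure comparisons must be carried out carefully because the nonlocal tail of $\phi_{a,\alpha}$ contributes to $P_\Omega\phi_{a,\alpha}$ near $a$ through long-range interactions; separating the genuine local contribution of $\Omega^c\cap B_r(a)$ from this tail, and verifying that the resulting lower bound on capacity is uniform in the scale $r$, is where the technical work concentrates.
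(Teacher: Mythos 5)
Your forward direction elaborates the paper's one-line dismissal correctly; the polar-insensitivity argument is sound (it implicitly relies on Perron solutions in the nonlocal setting being unchanged under polar modifications of the exterior datum, which is reasonable but not something the paper records explicitly).

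The backward direction contains a genuine circularity. Your opening move is to claim that the operator bound $\|P_\Omega\|_\alpha<\infty$ ``gives $P_\Omega\phi_{a,\alpha}(x)\lesssim |x-a|^\alpha$.'' It does not. The hypothesis $\|P_\Omega\|_\alpha<\infty$ only controls the H\"older seminorm of $u:=P_\Omega\phi_{a,\alpha}$ inside $\Omega$, i.e. $|u(x)-u(y)|\lesssim|x-y|^\alpha$ for $x,y\in\Omega$. To convert this interior oscillation bound into the pointwise decay $u(x)\lesssim|x-a|^\alpha$, you must already know that $u(y)\to\phi_{a,\alpha}(a)=0$ as $\Omega\ni y\to a$ --- but that is precisely regularity of $a$ for the datum $\phi_{a,\alpha}$, which is what you are trying to prove. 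The paper's proof of (i)$\Rightarrow$(ii) in Theorem \ref{thm:main1} makes this explicit: it lets $y\to a$ and uses the standing hypothesis there that $a$ is a regular point. Indeed, Theorems \ref{thm:main1} and \ref{thm:main2}, and thus the whole $\GHMDs\Rightarrow\LHMDs\Rightarrow$ fatness $\Rightarrow$ Wiener chain you want to ride, are stated only for $s$-regular $\Omega$; they cannot be invoked while regularity is the thing under proof.

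The paper escapes the circularity by using the non-triviality hypothesis and the Kellogg property at an earlier stage and then building a barrier directly, with no detour through capacity density. Fix $a\in\partial\Omega$ and set $u=P_\Omega\phi_{a,\alpha}$. For any $b\in\partial\Omega$ and small $r>0$, the H\"older bound gives $|u(x)-u(y)|\lesssim r^\alpha$ for $x,y\in B_r(b)\cap\Omega$. Because $b$ is non-trivial, $B_r(b)\setminus\Omega$ has positive capacity, so by the Kellogg property there is a regular boundary point $b'\in\partial\Omega\cap B_r(b)$ where $u(y)\to\phi_{a,\alpha}(b')$ is known. Pinning $u$ to that known value and shrinking $r$ forces $\lim_{\Omega\ni x\to b}u(x)=\phi_{a,\alpha}(b)$ for \emph{every} $b\in\partial\Omega$; in particular $u$ vanishes continuously at $a$ while $\phi_{a,\alpha}>0$ on $(\R^N\setminus\Omega)\setminus\{a\}$, so $u$ is a barrier at $a$ and $a$ is regular. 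Note that if you tried to patch your proposal by inserting exactly this Kellogg step to justify $u(x)\lesssim|x-a|^\alpha$, you would already have established regularity at $a$ at that very moment, so the subsequent harmonic-measure-decay, uniform-fatness and Wiener-criterion machinery would be superfluous. The barrier argument is not merely a shortcut --- it is the only non-circular way to use the operator bound here, because fatness and harmonic-measure decay are consequences of the finiteness of $\|P_\Omega\|_\alpha$ \emph{on regular domains}, not substitutes for the regularity itself.
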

\begin{proof}
    Obviously, we have that if $\Omega$ has a trivial point, then $\Omega$ is irregular.

    Conversely, if $\Omega$ is irregular, we suppose that there are no trivial points on $\partial\Omega$. Then for an arbitrary point $a\in\partial\Omega$, we set $u=P_\Omega\phi_{a,\alpha}$, where $\phi_{a,\alpha}$ is defined in \ref{funcdef-phi-bdr}. By the assumption, we would have
    \begin{equation}\label{equ-trivial-1}
        \lim_{\Omega\ni x\rightarrow b}u(x)=\phi_{a,\alpha}(b)\quad \text{for $b\in\partial\Omega$}.
    \end{equation}
    Now let $b\in\partial\Omega$ and $r>0$. Then by assumption, we have that $u\in\Lambda_\alpha(\Omega)$.
    Hence we have
    $$
        |u(x)-u(y)|\leq \CC r^\alpha\quad \text{for $x,y\in B_r(b)\cap\Omega$}.
    $$
    As we know $b$ is non-trivial, by Kellog property we can find some regular point $b^\prime\in\Omega\cap B_r(b)$.
    Then let $y\rightarrow b^\prime$, and it yields that $|u(x)-\phi_{a,\alpha}(b^\prime)|\leq\CC r^\alpha$. Hence by the definition of $\phi_{a,\alpha}$, we obtain
    \[
    \begin{split}
        |u(x)-\phi_{a,\alpha}(b)|&=|u(x)-\phi_{a,\alpha}(b^\prime)+\phi_{a,\alpha}(b^\prime)-\phi_{a,\alpha}(b)|\\
        &\leq\CC r^\alpha+d(b,b^\prime)^\alpha\\
        &\leq\CC r^\alpha+(2r)^\alpha\quad\text{for $x\in B_r(b)$},
    \end{split}
    \]
    which yields \ref{equ-trivial-1} by letting $r\rightarrow 0$.

    As $\phi_{a,\alpha}(a)=0$ and $\phi_{a,\alpha}(b)>0$ for $\R^N\setminus\Omega\ni\forall b\neq a$ by the definition of $\phi_{a,\alpha}$, we get that $u$ is a barrier function at $a$ by \ref{equ-trivial-1}. Hence $a$ is a regular boundary point (see \cite{LL}). Hence $\Omega$ is regular by the arbitrariness of $a\in\partial\Omega$.
\end{proof}

Based on the analysis above, we know a fractional trivial boundary point can be regarded as an interior point from the point of view of potential theory. Adding all these trivial boundary points to the domain, we obtain a domain with no trivial boundary point; the potential theoretical property of the resulting domain is the same as that of the original domain. In light of Proposition \ref{prop-trivialbdpoint} and Theorem \ref{thm-regtrivialbdr},
we may assume that $\Omega\subset\R^N$ is fractional regular in the sequel.

\subsection{Main results}

Let $\Omega\subset\R^N$ with dimension $N\geq 2$. Let operator $P_\Omega$ be the operator defined in \ref{op:fracmap}. We define the boundary test function with H\"older regularity as 
    \begin{equation}\label{funcdef-phi-bdr}
        \phi_{a,\sigma}(x):=\min\{d(x,a)^\sigma,1\}\quad{\rm for}\ a,x\in\R^N\setminus\Omega,
    \end{equation}
    which is bounded and continuous on $\R^N\setminus\Omega$, and so that $P_\Omega\phi_{a,\sigma}$ is well-defined if $\R^N\setminus\Omega$ is regular.

We also need the global and local decay properties for the fractional harmonic measure.
\begin{Def}\label{def-gd}
    Fractional harmonic measure satisfies the {\it Global harmonic measure decay} for some $\alpha>0$ ($\GHMDs_\alpha$), if there exist constants $C_2\geq 1$ and $0<r_0<\diam{\Omega}/2$ such that whenever $a\in\partial\Omega$ and $0<r<r_0$, for every $x\in\Omega\cap B_r(a)$ there holds
        $$
            \omega\left(\Omega^c\setminus B_r(a), \Omega;s\right)(x)\leq C_2\left(\frac{d(x,a)}{r}\right)^\alpha,
        $$
        where $\Omega^c$ denotes the complement of $\Omega$ in $\R^N$.
\end{Def}

\begin{Def}\label{def-ld}
    Fractional harmonic measure satisfies the {\it Local harmonic measure decay} for some $\alpha>0$ ($\LHMDs_\alpha$), if there exist constants $C_3\geq 1$ and $0<r_0<\diam{\Omega}/2$ such that whenever $a\in\partial\Omega$ and $0<r<r_0$, for every $x\in\Omega\cap B_r(a)$ there holds
        $$
            \omega\left((B_r(a))^c, \Omega\cap B_r(a);s\right)(x)\leq C_3\left(\frac{d(x,a)}{r}\right)^\alpha,
        $$
        where $(B_r(a))^c$ denotes the complement of $B_r(a)$ in $\R^N$.
\end{Def}

As we would see in the proof, we would utilize the exponent of interior H\"older regularity in the De Giorgi class (denoted as $\alpha_0<1$, see Lemma \ref{lem-holderdg}) for $p$-Lalacian functions as shown in \cite{AS, KS}. Also as shown in \cite{BLS}, the authors established a higher and almost critical interior H\"older regularity (denoted as $s<\sigma_1:=\min\{\frac{sp}{p-1}, 1\}$ with $p\geq 2$) locally in $\Omega$ for fractional $p$-harmonic functions, we are convinced that the boundary H\"older regularity can not exceed $\sigma_1$. In the meanwhile, Ros-Oton and Serra have established almost critical boundary regularity for fractional Laplacians in \cite{RS, RS1, RS2}. One can also see in \cite{ILPS, IMS} a global regularity via barrier analysis, in which we can find the H\"older exponent is always no larger than $s$. So to be on the safe side, we give the following definition for boundary H\"older exponents. 
\begin{Def}\label{def-sigma0}
    Define $\sigma_0:=\min\{s, \alpha_0\}$.
\end{Def}

\begin{Theorem}\label{thm:main1}
     Let $s\in(0,1)$. Let $\Omega\subset\R^{N}$ be a bounded $s$-regular domain. Suppose $0<\sigma\leq\sigma_0$, where $\sigma_0$ is a positive constant defined in Definition \ref{def-sigma0}. Consider the following four conditions:
    \begin{itemize}
        \item[(i)]$\|P_\Omega\|_{\sigma\rightarrow\sigma}<\infty$.
        \item[(ii)] There exists a constant $C_1$ such that whenever $a\in\partial\Omega$ and for every $x\in\Omega$,
        $$
            P_\Omega\phi_{a,\sigma}(x)\leq C_1d(x,a)^\sigma.
        $$
        \item[(iii)] $GHMD^s_\sigma$ holds on $\Omega$.
        \item[(iv)]  $LHMD^s_\sigma$ holds on $\Omega$.
    \end{itemize}
    Then we have 
    $$
        (i)\Longleftrightarrow(ii)\Longrightarrow(iii)\Longleftrightarrow(iv).
    $$
    If $(iii)$ or, equivalent, $(iv)$ holds with some $\sigma^\prime>\sigma$, then $(i)$ and, equivalently, $(ii)$ hold.
\end{Theorem}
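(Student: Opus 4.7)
\noindent My plan is to mirror the Aikawa--Shanmugalingam scheme (\cite{AS}, Theorem~2.2) for $p$-harmonic functions, adapted to the fractional setting via the Caffarelli--Silvestre extension of Section~\ref{sec:fhm} and the capacity equivalences of Lemmas~\ref{lem-besovcap}, \ref{lem-equivalentcap}. The test function $\phi_{a,\sigma}$ serves as the bridge between H\"older regularity and fractional harmonic measure decay, while the fractional maximum/comparison principle is the main tool.

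\medskip
\noindent For $(i)\Leftrightarrow(ii)$: $(i)\Rightarrow(ii)$ follows by taking $h=\phi_{a,\sigma}$, which lies in $\Lambda_\sigma(\R^N\setminus\Omega)$ with $\phi_{a,\sigma}(a)=0$; since $\Omega$ is $s$-regular, $P_\Omega\phi_{a,\sigma}$ extends continuously to $a$ with value zero and the H\"older bound from $(i)$ yields $(ii)$. Conversely, for $(ii)\Rightarrow(i)$, given $h\in\Lambda_\sigma(\R^N\setminus\Omega)$ and $x\in\Omega$ with nearest $a\in\partial\Omega$, the pointwise bound $|h(y)-h(a)|\leq C_h\,\phi_{a,\sigma}(y)$ on $\R^N\setminus\Omega$ (with $C_h$ depending on $\|h\|_{\Lambda_\sigma}$ and $\|h\|_\infty$) combines with the comparison principle to give $|P_\Omega h(x)-h(a)|\leq C_hC_1d(x,a)^\sigma$; together with interior $\alpha_0$-H\"older regularity (valid for $\sigma\leq\sigma_0\leq\alpha_0$, cf.\ Lemma~\ref{lem-holderdg}) via the distance-to-boundary dichotomy this produces $(i)$. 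The implication $(ii)\Rightarrow(iii)$ is immediate from the pointwise bound $\chi_{\Omega^c\setminus B_r(a)}\leq r^{-\sigma}\phi_{a,\sigma}$ on $\R^N\setminus\Omega$ (for $r\leq 1$) and comparison. For $(iii)\Leftrightarrow(iv)$: $(iv)\Rightarrow(iii)$ holds because $\chi_{\Omega^c\setminus B_r(a)}$ as boundary data on the smaller domain $\Omega\cap B_r(a)$ is dominated by $\chi_{B_r(a)^c}$; the converse $(iii)\Rightarrow(iv)$ is more delicate and I would prove it by passing to the Caffarelli--Silvestre extension and invoking a weighted boundary Harnack comparison between the local and global $|t|^\theta$-harmonic measures on $\Omega^*\cap\BB_r(a,0)$.

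\medskip
\noindent The main obstacle is the final assertion: $(iii)_{\sigma'}$ (equivalently $(iv)_{\sigma'}$) with $\sigma'>\sigma$ implies $(i)_\sigma$. Naive optimization of the bound $P_\Omega\phi_{a,\sigma}(x)\leq r^\sigma+C(d(x,a)/r)^{\sigma'}$ in $r$ yields only the suboptimal exponent $\sigma\sigma'/(\sigma+\sigma')<\sigma$, so a dyadic iteration is needed. Setting $M(\rho):=\sup_{\Omega\cap B_\rho(a)}P_\Omega\phi_{a,\sigma}$, I split the Perron data on $\Omega\cap B_\rho(a)$ as $\phi_{a,\sigma}\leq\rho^\sigma$ on $\Omega^c\cap B_\rho$ and as $\leq CM(2^k\rho)$ on the dyadic shells $A_k=B_{2^k\rho}\setminus B_{2^{k-1}\rho}$; bounding the annular harmonic measure $\omega(A_k,\Omega\cap B_\rho(a);s)(x)$ by $C(d(x,a)/(2^{k-1}\rho))^{\sigma'}$ (via $(iv)_{\sigma'}$ applied on the larger domain $\Omega\cap B_{2^{k-1}\rho}$, together with the domain-monotonicity comparison $\Omega\cap B_\rho\subset\Omega\cap B_{2^{k-1}\rho}$) then yields
\[
P_\Omega\phi_{a,\sigma}(x)\leq\rho^\sigma+C\Big(\frac{d(x,a)}{\rho}\Big)^{\sigma'}\sum_{k\geq 1}M(2^k\rho)\,2^{-(k-1)\sigma'}\qquad(x\in\Omega\cap B_\rho(a)).
\]
Under the inductive hypothesis $M(s)\leq C_*s^\sigma$ for $s\geq\rho$, the geometric sum converges (precisely because $\sigma'>\sigma$) and one obtains $M(\tau\rho)\leq\rho^\sigma(1+CC_*\tau^{\sigma'})$. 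Choosing $\tau$ so small that $C\tau^{\sigma'-\sigma}\leq\tfrac{1}{2}$ and then $C_*\geq\max(2\tau^{-\sigma},r_0^{-\sigma})$ closes the induction and yields $M(\rho)\leq C_*\rho^\sigma$, which is $(ii)_\sigma$. The delicate point is the uniform-in-$k$ control of the annular harmonic measures, for which the equivalence $(iii)\Leftrightarrow(iv)$ established above is essential.
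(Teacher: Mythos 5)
Your overall roadmap matches the paper's scheme (Caffarelli--Silvestre extension, comparison principle, De Giorgi oscillation estimates) and your treatment of $(i)\Leftrightarrow(ii)$, $(ii)\Rightarrow(iii)$, and $(iv)\Rightarrow(iii)$ is essentially the paper's argument. Two points deserve scrutiny.

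First, the genuine gap is $(iii)\Rightarrow(iv)$, which you dispatch in a single clause as ``a weighted boundary Harnack comparison between the local and global $|t|^\theta$-harmonic measures.'' That is not an argument, and it is not what the paper does. The paper's proof proceeds in two stages: it first shows (Lemma~\ref{lem-gtoperf}) that $\GHMDs_\sigma$ forces $\partial\Omega$ to be \emph{uniformly perfect}, by comparing the $|t|^\theta$-capacity of $\BB_\eta(a)\setminus\Omega^*$ from above (using \eqref{equ:gtol2}) and from below (via the fractional Poincar\'e inequality applied to the truncated potential $v=\max\{u_E,1/3\}-1/3$); uniform perfectness supplies a boundary point $z\in S_\rho(a)\cap\partial\Omega$ at a comparable radius. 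The paper then covers $S_\rho(a)$ by a chain of balls $\BB_{cr}(z_j)$ inside an annulus disjoint from $\BB_{\eta r}(a)$, shows the global harmonic measure belongs to the De Giorgi class there, and propagates a strict bound $u\le 1-\varepsilon_0$ around the whole sphere via Lemma~\ref{lem-decaydg} and Corollary~\ref{col-chaindecay}; the complementarity $\omega(\partial\Omega^*\cap\BB_{\eta r}(a),\Omega^*)+\omega(\partial\Omega^*\setminus\BB_{\eta r}(a),\Omega^*)=1$ then converts this into a lower bound for the global measure on $\s_\rho(a)$, which by comparison controls the local measure. None of this is ``boundary Harnack,'' and without supplying the uniform-perfectness step your $(iii)\Rightarrow(iv)$ is a placeholder, not a proof.

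Second, your treatment of the final assertion ($\LHMDs_{\sigma'}\Rightarrow(ii)_\sigma$) is correct in spirit but built on a false premise and overcomplicated. You claim that ``a dyadic iteration is needed'' because one-shot optimization of $r^\sigma+C(d(x,a)/r)^{\sigma'}$ gives the wrong exponent. But the paper simply decomposes the \emph{boundary datum} $\phi_{a,\sigma}$ on $\Omega^c$ into dyadic annular pieces and applies the harmonic-measure decay to each piece once, over the fixed domain $\Omega^*$: with $r=d(x,a)$,
\[
P_{\Omega^*}\phi_{a,\sigma}(x)\le r^\sigma+\sum_{j\ge 1}(2^jr)^\sigma\,\omega\bigl(\partial\Omega^*\setminus\BB_{2^{j-1}r}(a),\Omega^*;|t|^\theta\bigr)(x)\le r^\sigma\Bigl(1+C_3\,2^{\sigma'}\sum_{j\ge 1}2^{-j(\sigma'-\sigma)}\Bigr),
\]
and the geometric series closes immediately since $\sigma'>\sigma$. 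No restriction to $\Omega\cap B_\rho(a)$, no $M(\rho)$, no induction. Your bootstrap works (I checked the closing step), but it is forced on you only because you restrict to the subdomain $\Omega\cap B_\rho(a)$, at which point the exterior data on the shells picks up the unknown Perron values and you must control them recursively. Decomposing the boundary datum directly, as the paper does, avoids this entirely and is the cleaner argument.
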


If we ignore the exact H\"older exponent $\sigma$, we have the following characterization.
\begin{Theorem}\label{thm:main2}
    Let $s\in(0,1)$. Let $\Omega$ be a bounded $s$-regular domain. Then the following five statements are equivalent:
    \begin{itemize}
        \item[(i)]$\|P_\Omega\|_{\sigma\rightarrow\sigma}<\infty$ for some $\sigma>0$.
        \item[(ii)] There holds for some $\sigma>0$
        $$ 
            P_\Omega\phi_{a,\sigma}(x)\leq C_1d(x,a)^\sigma.
        $$
        \item[(iii)] $GHMD^s_\sigma$ holds for some $\sigma>0$.
        \item[(iv)]  $LHMD^s_\sigma$ holds for some $\sigma>0$.
        \item[(v)]  $\R^{N}\setminus\Omega$ is uniformly $s$-fat.
    \end{itemize}
\end{Theorem}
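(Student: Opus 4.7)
The plan is to build on Theorem \ref{thm:main1}, which already establishes $(\text{i})\Leftrightarrow(\text{ii})$, $(\text{iii})\Leftrightarrow(\text{iv})$, $(\text{ii})\Rightarrow(\text{iii})$, and the reverse implication from $(\text{iii})$ with a slightly larger exponent $\sigma'>\sigma$ back to $(\text{i})$, $(\text{ii})$. Once we allow the H\"older exponent to shrink, those four statements collapse into a single equivalence class, so the remaining task is to insert condition $(\text{v})$ — uniform fractional $s$-fatness of $\R^N\setminus\Omega$ — into that class. It therefore suffices to prove, for instance, $(\text{v})\Leftrightarrow(\text{iv})$.

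For $(\text{v})\Rightarrow(\text{iv})$, I would pass through the Caffarelli--Silvestre extension. By Theorem \ref{thm-cs-frachm}, the fractional harmonic measure $\omega(G,\Omega;s)$ agrees on $\Omega$ with the weighted harmonic measure $\omega(G,\Omega^*;|t|^\theta)$ on the extended domain $\Omega^*\subset\R^{N+1}$, and the weight $|t|^\theta$ with $\theta=1-2s$ is an $A_2$ Muckenhoupt weight. Lemma \ref{lem-equivalentcap} then asserts that uniform $s$-fatness of $\Omega^c$ measured by the Besov capacity in $\R^N$ is quantitatively equivalent to uniform $|t|^\theta$-fatness of $\Omega^c\times\{0\}$ measured by the condenser capacity in $\R^{N+1}$. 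This transfers the hypothesis $(\text{v})$ into uniform $|t|^\theta$-fatness of the relevant part of the boundary of $\Omega^*$. The Aikawa--Shanmugalingam characterization (Theorem 2.5 in \cite{AS}) for weighted $p$-harmonic measures in $A_p$-weighted Euclidean spaces then yields an $\LHMD$ bound with some $\sigma>0$ for $\omega(G,\Omega^*;|t|^\theta)$, which, undoing Theorem \ref{thm-cs-frachm}, is precisely $\LHMDs_\sigma$ on $\Omega$.

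The converse $(\text{iv})\Rightarrow(\text{v})$ runs the same reduction backwards: $\LHMDs_\sigma$ translates via Theorem \ref{thm-cs-frachm} into a local decay estimate for the weighted harmonic measure on $\Omega^*$; the Aikawa--Shanmugalingam equivalence again turns this into uniform $|t|^\theta$-fatness of $\Omega^c\times\{0\}$; and Lemma \ref{lem-equivalentcap} converts this back to uniform fractional $s$-fatness of $\Omega^c$ in $\R^N$. Closing the loop with Theorem \ref{thm:main1} finishes the argument.

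The expected obstacles are essentially bookkeeping rather than new ideas: (a) $\Omega^*$ is unbounded, so one has to confirm that the $\LHMD$ and fatness statements are only invoked at scales $r<r_0\ll \diam{\Omega}$ and that the point at infinity (handled by condition $(iv)$ in Definition \ref{def:fracindexp} and equation \ref{equ-cs-global}) does not interfere with the local Aikawa--Shanmugalingam argument; (b) the doubling and $(1,2)$-Poincar\'e inequalities required in \cite{AS} must be cited for $d\mu_\theta=|t|^\theta\,dxdt$, but this is standard since $|t|^\theta\in A_2$; and (c) the comparison constants in Lemma \ref{lem-equivalentcap} must be uniform in the scale $r$, so that the density condition \ref{equ:fat} transfers in both directions with a uniform fatness constant. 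I expect (a) to be the trickiest point, since the non-locality of $(-\Delta)^s$ is exactly what the extended domain encodes at infinity; however, the chain of equivalences reduces this to verifying compatibility between the Perron upper classes already set up in Section \ref{sec:fhm}, which is clean.
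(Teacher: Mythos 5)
Your high-level reduction is the same one the paper uses: Theorem \ref{thm:main1} already collapses (i)--(iv), so the new content of Theorem \ref{thm:main2} is to hook uniform $s$-fatness into that equivalence class, and you correctly note that it suffices to prove $(\text{iv})\Leftrightarrow(\text{v})$. Your idea of passing through the Caffarelli--Silvestre extension and the capacity equivalences (Lemma \ref{lem-equivalentcap} together with Lemmas \ref{lem-besovcapest}, \ref{lem-capest}, which the paper packages as Lemma \ref{lem-fatequal} and Lemma \ref{lem-fatbdr}) is also what the paper does. The divergence is in the middle of the chain: you propose to invoke Aikawa--Shanmugalingam's Theorem 2.5 as a black box applied to $\Omega^*$, whereas the paper reproves the crucial equivalence $\LHMDs_\sigma\Leftrightarrow(\text{v})$ directly (Lemma \ref{lem-capden-lhmd}), using the Maz'ya-type modulus-of-continuity estimate (Lemma \ref{lem-mod-cont}, i.e.\ Lemma 6.4 of \cite{AS}) for one direction and a hands-on potential-function comparison for the other.

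There is a concrete gap in your chain, beyond the unboundedness issue you flag. You assert that the weighted $\LHMD$ for $\Omega^*$ (in the \cite{AS} sense, governing $\omega\bigl(\Omega^*\cap\s_r(a),\,\Omega^*\cap\BB_r(a);|t|^\theta\bigr)$) is, after undoing Theorem \ref{thm-cs-frachm}, ``precisely'' $\LHMDs_\sigma$ on $\Omega$. This is not so: $\LHMDs_\sigma$ controls $\omega\bigl((B_r(a))^c,\Omega\cap B_r(a);s\bigr)$, which by Theorem \ref{thm-cs-frachm} equals the weighted harmonic measure for the domain $(\Omega\cap B_r(a))^*$, and $(\Omega\cap B_r(a))^*$ is the unbounded domain $\Omega^*\cap(B_r(a))^*$, strictly larger than $\Omega^*\cap\BB_r(a)$. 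A maximum-principle comparison on $\Omega^*\cap\BB_r(a)$ only gives $\omega\bigl((B_r(a))^c,(\Omega\cap B_r(a))^*;|t|^\theta\bigr)\le\omega\bigl(\Omega^*\cap\s_r(a),\Omega^*\cap\BB_r(a);|t|^\theta\bigr)$, so a weighted $\LHMD$ on $\Omega^*$ implies $\LHMDs$, but not conversely. Your $(\text{iv})\Rightarrow(\text{v})$ direction therefore does not close as written. The clean translation between $\Omega$ and $\Omega^*$ is at the level of $\GHMD$ rather than $\LHMD$, since $\partial\Omega^*\setminus\BB_r(a)=(\Omega^c\setminus B_r(a))\times\{0\}$ so the two global decay conditions genuinely coincide pointwise; routing through $\GHMD$ and then using $\GHMD\Leftrightarrow\LHMD$ on each side (Theorem \ref{thm:main1} in the fractional setting, \cite{AS} Theorem 2.2 in the weighted setting) would repair this. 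Even then, one still has to verify that \cite{AS} Theorems 2.2/2.5 (proved for bounded domains in PI spaces) apply to the unbounded, thin-boundary domain $\Omega^*$; the paper avoids that verification altogether by giving a direct proof of Lemma \ref{lem-capden-lhmd}.
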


\section{Minimizers and De Giorgi class}\label{sec-dgclass}

The contents in this section are standard. We mainly refer to the results in \cite{HKM, KS, AS}. And \cite{KS, AS} are concerned with more general metric spaces; however, we can utilize their results directly, since weighted Sobolev spaces equipped with Muckenhoupt $A_p$-weights ( even with $p$-admissible weights)  in Euclidean setting are included in Newtonian spaces in metric spaces bearing with doubling measures and so-called $(1,p)$-Poinc\'are inequality (see section A.2 in \cite{BB}, \cite{Shanmug}). 

In this section, we just state the results without proof.
\begin{Def}
    We call a function $u$ on $\Omega$ a $p$-{\it minimizer} in $\Omega$ if $u\in H^{1,p}_{\rm loc}(\Omega)$ and 
    \begin{equation}\label{def-minimizer}
        \int_K|\nabla u|^p \,d\mu\leq \int_K|\nabla (u+\varphi)|^p \,d\mu
    \end{equation}
    for all relatively compact subsets $K$ of $\Omega$ and for every function $\varphi\in H^{1,p}_0(K)$. A   $p$-{\it harmonic} function is a continuous   $p$-minimizer.
\end{Def}

\begin{Def}
    We call a function $u$ a   $p$-{\it superminimizer} in $\Omega$ if $u\in H^{1,p}_{\rm loc}(\Omega)$ and the energy minimizing inequality \ref{def-minimizer} holds for all relatively compact subsets $K$ in $\Omega$ and for every non-negative function $\varphi\in H^{1,p}_0(K)$.
\end{Def}
\begin{Remark}\label{rem-har-mini}
    Let $u$ be a   $p$-superminimizer in $\Omega$. Then the lower regularization ${\rm ess}\liminf_{y\rightarrow x}u(y)$ is a lower semicontinuous representative and it is a   $p$-superharmonic function. Conversely, a bounded   $p$-superharmonic function (resp.   $p$-subharmonic) function is a   $p$-superminimizer (resp. $p$-subminimizer). An unbounded $p$-superharmonic function need not be a $p$-superminimizer; however, the truncation of such a $p$-superharmonic function is a $p$-superminimizer.
\end{Remark}
\begin{Def}
    Given an open set $\Omega$, a function $u\in H^{1,p}_{\rm loc}(\Omega)$ is said to belong to the {\it De Giorgi class} $\rm DG_p(\Omega)$ if there are constants $C>0$ such that
    $$
        \int_{B_\varrho(z)}|\nabla (u-k)_+|^p\,d\mu\leq\frac{C}{(\rho-\varrho)^p}\int_{B_\rho(z)}(u-k)^p_+\,d\mu
    $$
    whenever $k\in\R$, $0<\varrho<\rho<\infty$, and $B_\rho(z)\subset\Omega$.
\end{Def}

\begin{Lemma}[\cite{AS}, Lemma 3.2]\label{lem-subminidg}
    If $u$ is a quasi-subminimizer on $\Omega$, then $u\in DG_p(\Omega)$. If $u$ is a quasi-minimizer on $\Omega$, then both $u$ and $-u$ belong to $DG_p(\Omega)$.
\end{Lemma}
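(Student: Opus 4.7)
The plan is to derive the standard Caccioppoli-type inequality that defines membership of ${\rm DG}_p(\Omega)$ directly from the quasi-subminimality, using a cut-off competitor and closing the argument by a hole-filling plus iteration argument.

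First I would fix $k\in\R$ and $0<\varrho<\rho$ with $B_\rho(z)\subset\Omega$, pick a smooth cut-off $\eta\in C_0^\infty(B_\rho(z))$ with $\eta\equiv 1$ on $B_\varrho(z)$, $0\le\eta\le 1$, and $|\nabla\eta|\le 2/(\rho-\varrho)$, and use the non-negative test function $\varphi:=\eta^p(u-k)_+\in H^{1,p}_0(B_\rho(z))$. Because $u$ is a quasi-subminimizer with some constant $Q\ge 1$, testing against the admissible competitor $u-\varphi$ gives
$$\int_{B_\rho(z)\cap\{u>k\}}|\nabla u|^p\,d\mu\le Q\int_{B_\rho(z)\cap\{u>k\}}|\nabla(u-\varphi)|^p\,d\mu,$$
after the common contribution on $\{u\le k\}$ has cancelled.

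On $\{u>k\}$ one computes $\nabla(u-\varphi)=(1-\eta^p)\nabla u-p\eta^{p-1}(u-k)_+\nabla\eta$. Applying the elementary inequality $|a+b|^p\le(1+\varepsilon)|a|^p+C_\varepsilon|b|^p$ together with the pointwise bound $(1-\eta^p)^p\le 1-\eta^p$ (valid for $p\ge 1$) and rearranging, I would arrive at
$$\int_{B_\rho(z)}\eta^p|\nabla(u-k)_+|^p\,d\mu\le\Theta\int_{B_\rho(z)}(1-\eta^p)|\nabla(u-k)_+|^p\,d\mu+\frac{C}{(\rho-\varrho)^p}\int_{B_\rho(z)}(u-k)_+^p\,d\mu$$
for some $\Theta=\Theta(Q,p,\varepsilon)>0$. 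Since $\eta\equiv 1$ on $B_\varrho$, this specialises to the form
$$\int_{B_\varrho(z)}|\nabla(u-k)_+|^p\,d\mu\le\Theta\int_{B_\rho(z)\setminus B_\varrho(z)}|\nabla(u-k)_+|^p\,d\mu+\frac{C}{(\rho-\varrho)^p}\int_{B_\rho(z)}(u-k)_+^p\,d\mu,$$
and Widman's hole-filling trick (add $\Theta\int_{B_\varrho}|\nabla(u-k)_+|^p\,d\mu$ to both sides and re-normalise) turns it into
$$\int_{B_\varrho(z)}|\nabla(u-k)_+|^p\,d\mu\le\lambda\int_{B_\rho(z)}|\nabla(u-k)_+|^p\,d\mu+\frac{C}{(\rho-\varrho)^p}\int_{B_\rho(z)}(u-k)_+^p\,d\mu,$$
with $\lambda=\Theta/(1+\Theta)\in(0,1)$.

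The final step is to invoke the standard iteration lemma for almost-decreasing non-negative functions of the radius (as in Giaquinta--Giusti) to eliminate the first term on the right, yielding the DG-inequality with a constant depending only on $Q$, $p$, and the data of $\mu$. This proves $u\in{\rm DG}_p(\Omega)$. The second statement then follows at once: a quasi-minimizer is simultaneously a quasi-subminimizer and a quasi-superminimizer, and quasi-superminimality of $u$ is precisely quasi-subminimality of $-u$; applying the first assertion to both signs yields $u,-u\in{\rm DG}_p(\Omega)$. The main technical obstacle is the hole-filling plus iteration step, because when $Q>1$ one cannot absorb the gradient remainder in one shot; a careful choice of $\varepsilon$ together with the iteration lemma is essential to close the argument with a DG-constant that depends only on $Q$, $p$, and $N$.
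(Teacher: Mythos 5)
The paper does not give a proof of this lemma: Section~3 opens by stating that its results are quoted without proof, and the lemma is cited directly from \cite{AS} (Lemma~3.2), whose proof in turn follows the quasi-minimizer regularity theory of \cite{KS} and Giaquinta--Giusti. Your argument---the cutoff competitor $u-\eta^p(u-k)_+$, the Young-type expansion together with the pointwise bound $(1-\eta^p)^p\le 1-\eta^p$, Widman's hole-filling to obtain the contraction factor $\lambda=\Theta/(1+\Theta)\in(0,1)$, and the Giaquinta--Giusti iteration lemma to discard the remaining gradient term on the right---is precisely that standard proof, and it is correct; the reduction of the quasi-minimizer statement to the quasi-subminimizer one by applying the first part to both $u$ and $-u$ is likewise exactly right. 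One point worth making explicit, since this paper defines $p$-(super)minimizers only with $Q=1$ and never defines ``quasi-subminimizer,'' is that your cancellation of the contribution on $\{u\le k\}$ tacitly uses the convention (as in \cite{KS}) that quasi-subminimality compares Dirichlet energies over the set $\{\varphi\ne 0\}$ rather than over all of $K$; under that convention the step is clean, and it is the definition the cited source uses, so there is no gap.
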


We denote by ${\rm osc}_E$ the oscillation $\sup_Eu-\inf_Eu$.

\begin{Lemma}[\cite{KS}, Lemma 3.4]\label{lem-holderdg}
    Suppose that both $u$ and $-u$ belong to $DG_p\left(B_{2\rho}(x)\right)$. Then 
    $$
        {\rm osc}_{B_\varrho(x)}u\leq C\left(\frac{\varrho}{\rho}\right)^{\alpha_0}{\rm osc}_{B_\rho(x)}u\quad{\rm for}\ 0<\varrho\leq\rho
    $$
    for some $0<\alpha_0\leq 1$ and $C\geq 1$ independent of $u$, $x$ and $\rho$.
\end{Lemma}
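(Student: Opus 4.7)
The plan is to establish this classical oscillation decay of De Giorgi type via a one-step reduction that is then iterated dyadically. Since the weighted measure $\mu$ is doubling and supports a $(1,p)$-Poincar\'e inequality (see Remark \ref{rem:wsneq}), the standard De Giorgi machinery is available in this weighted Euclidean setting.

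First I would reduce the claim to proving a one-step oscillation decay of the form
$$
{\rm osc}_{B_{\rho/4}(x)} u \,\leq\, \eta \cdot {\rm osc}_{B_{\rho}(x)} u
$$
for some $\eta \in (0,1)$ depending only on $p$, $N$ and the structural constants (doubling, Poincar\'e, and the constant in the definition of $DG_p$), valid whenever $B_\rho(x) \subset B_{2\rho}(x)$. Once this is in hand, iterating on the dyadic radii $\varrho_j = 4^{-j}\rho$ and interpolating for intermediate radii (a standard Campanato-type lemma) yields the stated estimate with $\alpha_0 = \log(1/\eta)/\log 4$.

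For the one-step decay, put $M = \sup_{B_\rho(x)} u$, $m = \inf_{B_\rho(x)} u$ and $k = (M+m)/2$. A pigeonhole argument produces one of the inequalities $\mu(\{u \geq k\} \cap B_{\rho/2}(x)) \leq \tfrac12 \mu(B_{\rho/2}(x))$ or $\mu(\{u \leq k\} \cap B_{\rho/2}(x)) \leq \tfrac12 \mu(B_{\rho/2}(x))$; since both $u$ and $-u$ lie in $DG_p(B_{2\rho}(x))$, replacing $u$ by $-u$ if necessary I may assume the first. I would then invoke a De Giorgi lemma of the following shape: if $v \in DG_p(B_\rho(x))$ satisfies $v \leq L$ on $B_\rho(x)$ and $\mu(\{v \leq 0\} \cap B_{\rho/2}(x)) \geq \tfrac{1}{2} \mu(B_{\rho/2}(x))$, then $v \leq (1-\delta)L$ on $B_{\rho/4}(x)$ for a universal $\delta \in (0,1)$. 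Applied with $v = u - k$ and $L = (M-m)/2$, this gives $\sup_{B_{\rho/4}(x)} u \leq M - \tfrac{\delta}{2}(M-m)$, and combining with $\inf_{B_{\rho/4}(x)} u \geq m$ yields ${\rm osc}_{B_{\rho/4}(x)} u \leq (1-\delta/2){\rm osc}_{B_\rho(x)} u$, giving $\eta = 1 - \delta/2$.

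The main obstacle is the De Giorgi lemma itself, which requires a two-stage truncation scheme. The inner iteration runs the DG-class Caccioppoli inequality together with the Sobolev--Poincar\'e embedding on a nested sequence of concentric balls with dyadically increasing truncation levels, producing a super-geometric recursive inequality
$$
A_{j+1} \,\leq\, C\, b^{j} A_j^{1+\kappa},\qquad A_j \,=\, \mu\bigl(\{v > k_j\}\cap B_j\bigr)/\mu(B_j),
$$
for some $\kappa>0$, which by a standard fast-convergence lemma forces $A_j \to 0$ provided the initial $A_0$ lies below a universal threshold $A_*$. The outer reduction --- from the coarse measure hypothesis $\mu(\{v \leq 0\}\cap B_{\rho/2}) \geq \tfrac12\mu(B_{\rho/2})$ down to $A_0 < A_*$ --- is a measure shrinkage argument obtained by pairing Caccioppoli with a relative isoperimetric (BV-Poincar\'e) inequality on the intermediate level sets between $\{v \leq 0\}$ and $\{v \geq (1-\epsilon)L\}$. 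Both stages rely crucially on the doubling and $(1,p)$-Poincar\'e conditions, and chaining them yields the universal $\delta$.
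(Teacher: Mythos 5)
The paper does not actually prove this lemma: it is stated with the attribution ``[\cite{KS}, Lemma 3.4]'' and the section opens with ``In this section, we just state the results without proof.'' So there is no in-paper argument to compare against; the relevant proof lives in the cited reference of Kinnunen--Shanmugalingam, and that proof is exactly the standard De Giorgi oscillation-decay machinery in the doubling-plus-$(1,p)$-Poincar\'e setting (the paper's Lemma \ref{lem-decaydg} and Corollary \ref{col-chaindecay}, also imported from [KS], are precisely the measure-to-pointwise and chaining building blocks of that machinery). Your blind reconstruction follows that same route: one-step oscillation reduction via pigeonhole plus a De Giorgi lemma, dyadic iteration, Campanato interpolation, and a two-stage (measure shrinkage followed by super-geometric fast convergence) proof of the De Giorgi lemma, with the measure-shrinkage step powered by the relative isoperimetric inequality available from the $(1,p)$-Poincar\'e assumption. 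In substance your sketch is correct and is the same argument.

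Two small points worth tightening if you were to flesh this out. First, the pigeonhole as written is not quite true: both $\mu(\{u\geq k\}\cap B_{\rho/2})$ and $\mu(\{u\leq k\}\cap B_{\rho/2})$ can simultaneously exceed $\tfrac12\mu(B_{\rho/2})$ when $\mu(\{u=k\})>0$; use the disjoint sets $\{u>k\}$ and $\{u<k\}$ instead, which forces one of them below half measure, and then the corresponding non-strict superlevel set of $\pm(u-k)$ has measure at least half, which is what the De Giorgi lemma actually needs. Second, in the normalization step for the De Giorgi lemma, $v=u-k$ satisfies $-L\leq v\leq L$ rather than $0\leq v\leq L$, so after rescaling to $\tilde u=(v+L)/(2L)\in[0,1]$ the gain $t$ from the De Giorgi lemma translates to $\delta=2t$ in your notation; harmless, but worth writing out so the constants track. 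Neither issue affects the validity of the approach, and your formula $\alpha_0=\log(1/\eta)/\log 4$ with $\eta=1-\delta/2\in(1/2,1)$ correctly produces $\alpha_0\in(0,1)$ as required.
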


\begin{Lemma}[\cite{KS}, Lemma 3.6]\label{lem-decaydg}
    Let $0<\rho_1<\infty$ and $u\in DG_p\left(B_{2\rho_1}(x)\right)$. Suppose $0\leq u\leq 1$ on $B_{2\rho_1}(x)$ and 
    $$
        \frac{\mu\left(\left\{x\in B_{\rho_1}(x):u>1-s\right\}\right)}{\mu\left(B_{\rho_1}(x)\right)}\leq\gamma<1
    $$
    for some $0<s<1$. Then there exists $t=t(p, \gamma, s)>0$ such that
    $$
        u\leq 1-t\quad{\rm on}\ B_{\rho_1/2}(x).
    $$
\end{Lemma}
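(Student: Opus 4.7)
The plan is to prove the lemma by classical De Giorgi iteration on a nested sequence of concentric balls, using the measure hypothesis as the starting density estimate.

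First I would set up the iteration. For a parameter $t\in(0,s)$ to be chosen later, define the decreasing radii $\rho_j:=\rho_1(1+2^{-j})/2$ (so that $\rho_0=\rho_1$ and $\rho_j\downarrow\rho_1/2$) and the increasing levels $k_j:=1-s+(s-t)(1-2^{-j})$ (so that $k_0=1-s$, $k_j\uparrow 1-t$, and $k_{j+1}-k_j=(s-t)2^{-(j+1)}$). Set $A_j:=\mu(\{u>k_j\}\cap B_{\rho_j}(x))$ and the normalized quantity $Y_j:=A_j/\mu(B_{\rho_1}(x))$. The hypothesis reads $Y_0\leq\gamma$, and since $u$ admits a locally H\"older-continuous representative on $B_{2\rho_1}(x)$ by Lemma \ref{lem-holderdg}, the conclusion is equivalent to proving $Y_j\to 0$ as $j\to\infty$.

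Next I would derive a nonlinear recursion for $Y_j$. Applying the De Giorgi class inequality at level $k_{j+1}$ with the pair $(B_{\rho_{j+1}},B_{\rho_j})$, together with the bound $(u-k_{j+1})_+\leq 1$, gives
$$\int_{B_{\rho_{j+1}}}|\nabla(u-k_{j+1})_+|^p\,d\mu\leq \frac{C\,2^{jp}}{\rho_1^p}\,A_j.$$
Combining this with the weighted $(1,p)$-Sobolev--Poincar\'e inequality on $B_{\rho_{j+1}}$, which holds for $A_p$-weighted measures and more generally in the Newtonian framework of Remark \ref{rem:wsneq}, applied to the truncation $(u-k_{j+1})_+$, and then invoking Chebyshev on $\{u>k_{j+1}\}$ via $(u-k_j)_+\geq k_{j+1}-k_j$, yields after normalization a recursion of the form
$$Y_{j+1}\leq \frac{C_1\,b^j}{(s-t)^{\kappa}}\,Y_j^{1+\alpha},$$
with constants $b>1$, $\kappa>0$, $\alpha>0$ depending only on $p$ and the doubling and Poincar\'e constants of $\mu$.

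Finally, by De Giorgi's fast geometric convergence lemma, $Y_j\to 0$ provided $Y_0$ is smaller than a computable threshold $\lambda_0=\lambda_0(C_1,b,\alpha,s-t)$. The main technical obstacle is that the hypothesis supplies only $Y_0\leq\gamma$ and $\gamma$ may be arbitrarily close to $1$, violating the threshold for any fixed $t$. The standard remedy is a preliminary shrinkage step based on the logarithmic Caccioppoli inequality derived from $u\in DG_p$: for any prescribed $\nu>0$ there exist an integer $n=n(\gamma,\nu,p)$ and a level $s_n\in(0,s/2)$, with $s_n=s_n(\gamma,p)>0$, such that $\mu(\{u>1-s_n\}\cap B_{\rho_1}(x))\leq\nu\mu(B_{\rho_1}(x))$. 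Choosing $\nu=\lambda_0$ and running the main iteration with $s_n$ in place of $s$ then yields $u\leq 1-s_n/2$ on $B_{\rho_1/2}(x)$, so the lemma holds with $t=s_n/2=t(p,\gamma,s)$.
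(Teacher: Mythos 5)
The paper does not prove this lemma; it imports it wholesale from \cite{KS} (Lemma~3.6 there), so there is no internal proof to compare against. Evaluating your proposal on its own terms: the overall architecture is right (a measure-shrinking preliminary step to push the initial density below the De Giorgi iteration threshold, followed by geometric fast convergence), and you correctly identify that $\gamma<1$ alone is too weak to start the iteration when $\gamma$ is near~$1$.

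The genuine gap is the tool you invoke for the shrinking step. A ``logarithmic Caccioppoli inequality'' is \emph{not} a consequence of membership in $DG_p$. The defining inequality of the De Giorgi class controls $\int|\nabla(u-k)_+|^p\,d\mu$ by the $L^p$-mass of the truncation $(u-k)_+$; a log-type estimate would require testing the underlying variational or PDE structure with $u^{-1}$-type test functions, which is not encoded in $DG_p$ (indeed $DG_p$ contains all bounded quasi-sub- and quasi-super-minimizers, and a log estimate for the whole class would imply a Harnack inequality, which fails). The correct ingredient --- and the one \cite{KS} actually use, in their companion Lemma~3.5, which is proved precisely to feed into their Lemma~3.6 --- is a De Giorgi isoperimetric-type estimate: in the weighted/metric setting this is obtained from the weak $(1,p)$-Poincar\'e inequality applied between consecutive dyadic levels $1-s/2^i$ and $1-s/2^{i+1}$, with the gradient on the intermediate set controlled by the Caccioppoli inequality available from $DG_p$. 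Iterating this produces, after finitely many steps $n=n(\gamma,p,\nu)$, the arbitrarily small density $\nu$ you need, at the level $s_n=s/2^n$, and then your main iteration closes. A secondary, fixable point: in deriving the recursion for $Y_j$ you apply a Sobolev--Poincar\'e inequality directly to $(u-k_{j+1})_+$ on $B_{\rho_{j+1}}$, which leaves an uncontrolled average term unless the zero set has proportional measure; the standard device is to multiply by a Lipschitz cutoff $\eta_j$ supported in $B_{\rho_j}$ with $\eta_j\equiv1$ on $B_{\rho_{j+1}}$ and apply the Sobolev inequality to $\eta_j(u-k_{j+1})_+\in H^{1,p}_0(B_{\rho_j})$, which yields the recursion you state without the extra hypothesis.
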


\begin{Corollary}[\cite{KS}, Corollary 3.7]\label{col-chaindecay}
    Let $o<\rho<\infty$ and $B_{\rho/2}(x_1)\cap B_{\rho/2}(x_2)\neq\emptyset$. Suppose $u\in DG_p\left(B_{2\rho}(x_2)\right)$ with $0\leq u\leq 1$ in $B_{2\rho}(x_2)$. If $u\leq 1-\varepsilon_1$ on $B_{\rho/2}(x_1)$ for some $\varepsilon_1>0$, then there exists a positive constant $\varepsilon_2=\varepsilon_2(\varepsilon_1)<1$ such that $u\leq 1-\varepsilon_2$ on $B_{\rho/2}(x_2)$.
\end{Corollary}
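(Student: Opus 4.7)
The plan is to reduce the statement to a single application of Lemma \ref{lem-decaydg} centered at $x_2$, with the measure-density hypothesis supplied by the overlap assumption. First, $B_{\rho/2}(x_1)\cap B_{\rho/2}(x_2)\neq\emptyset$ gives $|x_1-x_2|<\rho$, so in particular $B_{\rho/2}(x_1)\subset B_{3\rho/2}(x_2)\subset B_{2\rho}(x_2)$, placing the hypothesis region inside the ball where the De Giorgi control on $u$ is available. I would then take $\rho_1:=\rho$ in the notation of Lemma \ref{lem-decaydg}, so that $B_{2\rho_1}(x_2)=B_{2\rho}(x_2)$ and the desired conclusion is exactly an estimate on $B_{\rho_1/2}(x_2)=B_{\rho/2}(x_2)$.

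Next, I would set $s:=\varepsilon_1$. The hypothesis $u\leq 1-\varepsilon_1$ on $B_{\rho/2}(x_1)$ implies
$$
\bigl\{y\in B_\rho(x_2):u(y)>1-\varepsilon_1\bigr\}\subset B_\rho(x_2)\setminus B_{\rho/2}(x_1),
$$
so the density hypothesis of Lemma \ref{lem-decaydg} is satisfied with any $\gamma<1$ provided one can show the geometric lower bound
$$
\frac{\mu\bigl(B_\rho(x_2)\cap B_{\rho/2}(x_1)\bigr)}{\mu\bigl(B_\rho(x_2)\bigr)}\geq c_0>0
$$
for a constant $c_0$ independent of $x_1,x_2,\rho$. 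I would prove this by picking a point $y\in B_{\rho/2}(x_1)\cap B_{\rho/2}(x_2)$ together with a ball $B_r(y)$ of radius $r$ comparable to $\rho$ contained in $B_\rho(x_2)\cap B_{\rho/2}(x_1)$, and then invoking the doubling property of $\mu$ to get $\mu(B_r(y))\gtrsim\mu(B_\rho(x_2))$. With $\gamma:=1-c_0<1$, Lemma \ref{lem-decaydg} yields $t=t(p,\gamma,\varepsilon_1)>0$ such that $u\leq 1-t$ on $B_{\rho/2}(x_2)$, and one simply sets $\varepsilon_2:=t$.

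The main obstacle is the geometric measure-density estimate, since in the limiting case $|x_1-x_2|\to\rho$ the sub-ball sitting in the overlap degenerates, and a purely non-emptiness hypothesis on the overlap cannot by itself force a universal $c_0$. In the framework of \cite{KS} this hypothesis is understood in a doubling, quasiconvex setting where overlapping balls at a fixed scale automatically accommodate a sub-ball at a comparable scale, which is exactly what makes $c_0$ universal; once this quantitative overlap is extracted, the remainder of the argument is the direct application of the decay lemma outlined above.
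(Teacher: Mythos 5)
Your reduction to Lemma~\ref{lem-decaydg} with center $x_2$, $\rho_1=\rho$, and $s=\varepsilon_1$ is exactly the argument of \cite{KS} for their Corollary~3.7, which the paper cites without reproducing a proof. The structure is right: the set $\{y\in B_\rho(x_2): u(y)>1-\varepsilon_1\}$ is disjoint from $B_{\rho/2}(x_1)$, so the density hypothesis of the decay lemma reduces to a lower bound on $\mu\bigl(B_\rho(x_2)\cap B_{\rho/2}(x_1)\bigr)/\mu\bigl(B_\rho(x_2)\bigr)$.

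However, your worry about degeneration as $|x_1-x_2|\to\rho$ is misplaced, and you should not need to lean on an unproved ``quantitative overlap'' hypothesis. The set that shrinks as $|x_1-x_2|\to\rho$ is $B_{\rho/2}(x_1)\cap B_{\rho/2}(x_2)$, but that is not the set you need to measure; the relevant set is $B_\rho(x_2)\cap B_{\rho/2}(x_1)$, and this one never degenerates. Indeed, in the Euclidean (or any geodesic) ambient space, the overlap hypothesis is used only to conclude $|x_1-x_2|<\rho$. Given that, set $d=|x_1-x_2|$ and take $z$ on the segment $[x_1,x_2]$ at distance $\min(\rho/4,d)$ from $x_1$ (so $z=x_1$ if $d=0$). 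Then $|z-x_1|\le\rho/4$ gives $B_{\rho/8}(z)\subset B_{3\rho/8}(x_1)\subset B_{\rho/2}(x_1)$, while $|z-x_2|=\max(0,d-\rho/4)<3\rho/4$ gives $B_{\rho/8}(z)\subset B_{7\rho/8}(x_2)\subset B_\rho(x_2)$. Thus $B_\rho(x_2)\cap B_{\rho/2}(x_1)$ always contains a ball of radius $\rho/8$ whose center lies in $B_\rho(x_2)$, and the doubling property of $\mu=|t|^\theta\,dx\,dt$ (an $A_2$ weight) yields the universal constant $c_0$; in particular $\gamma:=1-c_0<1$ is uniform in $x_1,x_2,\rho$. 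With this supplied, your application of Lemma~\ref{lem-decaydg} closes the proof.
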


\section{Proof of Theorem \ref{thm:main1}}

 The proof includes a series of lemmas. 

\begin{Lemma}
    Condition (i) $\Leftrightarrow$ Condition (ii).
\end{Lemma}
\begin{proof}
    First, we prove condition (i) $\Rightarrow$ condition (ii). From the definition we know $\phi_{a,\sigma}\in\Lambda_{\sigma}(\R^N\setminus\Omega)$ with $\|\phi_{a,\sigma}\|_{\Lambda_\sigma(\R^N\setminus\Omega)}\leq 2$, which yields
    \[
    \begin{split}
      |P_\Omega\phi_{a,\sigma}(x)-P_\Omega\phi_{a,\sigma}(y)|&\leq \|P_\Omega\|_{\sigma\rightarrow\sigma}|\phi_{a,\sigma}(x)-\phi_{a,\sigma}(y)|\\
      &\leq 2\|P_\Omega\|_{\sigma\rightarrow\sigma}d(x,y)^\sigma\quad\rm for\ x,y\in\Omega.  
    \end{split}
    \]
    Since $a$ is an $s$-regular point by assumption, we conclude that condition (ii) with $C_1=2\|P_\Omega\|_{\sigma\rightarrow\sigma}d$ by letting $y\rightarrow a$.

    Now we prove that condition (ii) $\Rightarrow$ condition (i).

    As in the extension setting of Definition \ref{def:frachmbyc-s}, we have $\partial\Omega^*=\Omega^c\times\{0\}$, where $\Omega^c=\R^N\setminus\Omega$. Let $f\in \Lambda_\sigma(\Omega^c)$, then we know $f\in \Lambda_\sigma(\partial\Omega^*)$. As we know from Caffarelli-Silvestre extension that $P_{\Omega^*}f=P_\Omega f$ on $\Omega$, then by the maximum principle (see \cite{HKM}, 6.5) we have 
    $$
        \sup_{x\in\Omega}|P_{\Omega}f(x)|\leq\sup_{x\in\Omega^*}|P_{\Omega^*}f(x)|\leq\sup_{\xi\in\partial\Omega^*}|f(\xi)|\leq\|f\|_{\Lambda_\sigma(\partial\Omega^*)}=\|f\|_{\Lambda_\sigma(\R^N\setminus\Omega)}.
    $$ 
    Since $\Omega$ is bounded, we just need to show for $\forall x,y\in\Omega$ such that $d(x,y)\leq 1$, there holds
    $$
        |P_\Omega f(x)-P_\Omega f(y)|\leq C \|f\|_{\Lambda_\sigma(\R^N\setminus\Omega)}d(x,y)^\sigma.
    $$
    
    Now let $x,y\in\Omega$ with $d(x,y)\leq 1$. Let $M=\max\{d(x,\partial\Omega), d(y,\partial\Omega)\}/2$. Generally, we can just suppose $d(x,\partial\Omega)\geq d(y,\partial\Omega)$. Since $\partial\Omega$ is compact, there $\exists x_0\in\partial\Omega$ such that $d(x,\partial\Omega)=d(x,x_0)$.
    Set $f_0:\R^N\setminus\Omega\mapsto\R$ by $f_0(\xi):=f(\xi)-f(x_0)$ for $\forall\xi\in\R^N\setminus\Omega$. Then we have the inequality 
    \begin{equation}\nonumber
        |f_0(\xi)|\leq\left\{
        \begin{split}
            &|f(\xi)-f(x_0)|\leq\|f\|_{\Lambda_\sigma(\R^N\setminus\Omega)}d(\xi,x_0)^\sigma\leq\|f\|_{\Lambda_\sigma(\R^N\setminus\Omega)}\phi_{x_0,\sigma}(\xi),\quad d(x_0,\xi)\leq 1;\\
            &|f(\xi)|+|f(x_0)|\leq2\|f\|_{\Lambda_\sigma(\R^N\setminus\Omega)}\leq2\|f\|_{\Lambda_\sigma(\R^N\setminus\Omega)}\phi_{x_0,\sigma}(\xi),\quad d(x_0,\xi)\geq 1,
        \end{split}
        \right.
    \end{equation}
    where $\phi_{x_0,\sigma}$ is the boundary test function.
    Now from $(ii)$ we know for $\forall z\in\Omega$ there holds
    \begin{equation}\label{equ-main-1}
        |P_{\Omega}f_0(z)|=|P_{\Omega^*}f_0(z)|\leq 2C_1\|f\|_{\Lambda_\sigma(\R^N\setminus\Omega)}d(z,x_0)^\sigma
    \end{equation}
    by the definiton of $\phi_{x_0,\sigma}$.

    Then let $x,y\in\Omega$, we have two cases as:
    \begin{itemize}
        \item[\underline{Case 1}:] $d(x,y)\leq M=d(x,\partial\Omega)/2$.
        Since $P_{\Omega^*}f_0$ is $|t|^\theta$-harmonic, we have that $P_{\Omega^*}f_0$ belongs to De Giorgi class $DG^{|t|^\theta}(\BB_{2M}(x))$. Hence from Lemma \ref{lem-holderdg} we obtain
        $$
            \mathop{\rm osc}\limits_{B_r(x)}P_{\Omega^*}f_0\leq\mathop{\rm osc}\limits_{\BB_r(x)}P_{\Omega^*}f_0
            \leq C\left(\frac{r}{M}\right)^{\alpha_0}\mathop{\rm osc}\limits_{\BB_M(x)}P_{\Omega^*}f_0,\quad  {\rm for}\  0<r\leq M.
        $$
        Then from the definition of $M$ we observe that 
        $$
            d(z,x_0)\leq d(x,z)+d(z,x_0)\leq 3M
        $$
        whenever $z\in \BB_M(x)$. Then by \ref{equ-main-1} we have
        $$
            \mathop{\rm osc}\limits_{B_r(x)}P_{\Omega^*}f_0\leq C\|f\|_{\Lambda_\sigma(\Omega^c)}M^\sigma.
        $$
        Hence there holds
        \[
        \begin{split}
                |P_{\Omega^*}f(x)-P_{\Omega^*}f(y)|&=|P_{\Omega^*}f_0(x)-P_{\Omega^*}f_0(y)|\\
                &\leq C\left(\frac{d(x,y)}{M}\right)^{\alpha_0}\|f\|_{\Lambda_\sigma(\R^N\setminus\Omega)}M^\sigma\\
                &\leq C\|f\|_{\Lambda_\sigma(\R^N\setminus\Omega)}d(x,y)^\sigma,
        \end{split}
        \]
        where in the last inequality we used $\sigma\leq\alpha_0$ and $d(x,y)\leq M$.
        
        \item[\underline{Case 2}:] $d(x,y)\geq M=d(x,\partial\Omega)/2$ implies $d(y,x_0)\leq d(x,y)+d(x,x_0)\leq 3d(x,y)$. Then again from \ref{equ-main-1} we obtain
        \[
        \begin{split}
            |P_{\Omega^*}f(x)-P_{\Omega^*}f(y)|&=|P_{\Omega^*}f_0(x)-P_{\Omega^*}f_0(y)|
            \leq |P_{\Omega^*}f_0(x)|+|P_{\Omega^*}f_0(y)|\\
            &\leq 2C_1\|f\|_{\Lambda_\sigma(\R^N\setminus\Omega)}\left(d(x,x_0)^\sigma+d(y,x_0)^\sigma\right)\\
            &\leq 2C_1\|f\|_{\Lambda_\sigma(\R^N\setminus\Omega)}\left(2^\sigma+3^\sigma\right)d(x,y)^\sigma.
        \end{split}
        \]
    \end{itemize}
    By combining two cases we conclude the desired results by noticing \ref{equ-main-1}.

\end{proof}

\begin{Lemma}
    Theorem \ref{thm:main1} (ii)$\Rightarrow$ Theorem \ref{thm:main1} (iii).
\end{Lemma}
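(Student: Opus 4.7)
The plan is to prove (ii)$\Rightarrow$(iii) by a direct comparison argument: I will construct an $s$-superharmonic majorant of the characteristic function $\chi_{\Omega^c\setminus B_r(a)}$ out of the test function $\phi_{a,\sigma}$, then invoke (ii) to convert this bound into the decay estimate $\GHMDs_\sigma$ demands.

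First I would fix $a\in\partial\Omega$ and $0<r<r_0$, where $r_0$ is chosen smaller than $\min\{1,\diam{\Omega}/2\}$ so that $\phi_{a,\sigma}$ is in its non-truncated regime at radius $r$. On the complement $\R^N\setminus\Omega$ I would verify the pointwise inequality
\begin{equation*}
\chi_{\Omega^c\setminus B_r(a)}(y)\;\leq\; r^{-\sigma}\phi_{a,\sigma}(y),
\end{equation*}
which is immediate in both regions: if $y\in\Omega^c\setminus B_r(a)$ then $d(y,a)\geq r$ forces $\phi_{a,\sigma}(y)\geq\min\{r^\sigma,1\}=r^\sigma$; if $y\in\Omega^c\cap B_r(a)$ the left side is zero while the right side is non-negative.

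Next I would apply the comparison principle for fractional harmonic measure: both $r^{-\sigma}P_\Omega\phi_{a,\sigma}$ and $\omega(\Omega^c\setminus B_r(a),\Omega;s)$ are $(s,2)$-harmonic in $\Omega$, and on $\R^N\setminus\Omega$ the first dominates the boundary data of the second by the inequality above. Consequently any element of the upper class $\U^{s,2}_{\chi_{\Omega^c\setminus B_r(a)},\Omega}$ can be compared with $r^{-\sigma}P_\Omega\phi_{a,\sigma}$, yielding
\begin{equation*}
\omega(\Omega^c\setminus B_r(a),\Omega;s)(x)\;\leq\; r^{-\sigma}P_\Omega\phi_{a,\sigma}(x)\quad\text{for all }x\in\Omega.
\end{equation*}
Hypothesis (ii) then gives the right-hand side the pointwise bound $r^{-\sigma}C_1 d(x,a)^\sigma$, and combining these inequalities produces exactly the decay estimate in Definition \ref{def-gd} with constant $C_2=C_1$.

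The only mildly delicate step is the justification of the comparison above within the Perron framework of Definition \ref{def:fracperronsol}, since $r^{-\sigma}\phi_{a,\sigma}$ does not literally coincide with $\chi_{\Omega^c\setminus B_r(a)}$ on $\R^N\setminus\Omega$; to handle this cleanly I would argue at the level of upper classes, noting that for any $u\in\U^{s,2}_{\phi_{a,\sigma},\Omega}$ the function $r^{-\sigma}u$ is $(s,2)$-superharmonic, bounded below, satisfies $r^{-\sigma}u\geq\chi_{\Omega^c\setminus B_r(a)}$ on $\R^N\setminus\Omega$, and dominates the required boundary $\liminf$, so truncating to $\min\{r^{-\sigma}u,1\}$ (which remains $s$-superharmonic) lies in $\U^{s,2}_{\chi_{\Omega^c\setminus B_r(a)},\Omega}$. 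Taking infima over $u$ and applying Definition \ref{def:frachm} then closes the argument.
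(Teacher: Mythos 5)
Your argument is essentially the same as the paper's: the key inequality $\chi_{\Omega^c\setminus B_r(a)}\leq r^{-\sigma}\phi_{a,\sigma}$ on $\Omega^c$ followed by a comparison and an appeal to (ii). The one genuine difference is the framework in which the comparison is run: the paper immediately passes through the Caffarelli--Silvestre extension and compares the two upper Perron solutions in $\Omega^*\subset\R^{N+1}$ with the weight $|t|^\theta$, where the standard weighted Perron theory of \cite{HKM} gives monotonicity of $\overline{P}^{|t|^\theta}_{\Omega^*}$ in the boundary data for free; you instead stay in the intrinsic fractional framework and try to build an admissible competitor in $\U^{s,2}_{\chi_{\Omega^c\setminus B_r(a)},\Omega}$ by hand.

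Here your last step has a genuine (if small) gap that you yourself flag but do not close. Definition \ref{def:fracperronsol} requires \emph{equality} $u=\chi_G$ a.e.\ on $\R^N\setminus\Omega$ as condition (iv) for membership in $\U^{s,2}_{G,\Omega}$. Your truncated function $\min\{r^{-\sigma}u,1\}$ equals $\min\{r^{-\sigma}\phi_{a,\sigma},1\}$ on $\R^N\setminus\Omega$, which dominates $\chi_{\Omega^c\setminus B_r(a)}$ but is strictly positive on $\Omega^c\cap B_r(a)$ near $\partial B_r(a)$, so it is not identically the characteristic function there and hence does not literally belong to $\U^{s,2}_{\chi_{\Omega^c\setminus B_r(a)},\Omega}$. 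What you actually need is the \emph{monotonicity} of the fractional Perron solution in the exterior datum (if $g_1\leq g_2$ a.e.\ on $\Omega^c$ then $\overline{P}^{s,2}_\Omega g_1\leq\overline{P}^{s,2}_\Omega g_2$), which is true and standard for the nonlocal Perron method of \cite{KKP} via the comparison principle for $(s,p)$-superharmonic functions, but is not the same as membership of a truncation in a single upper class. Replacing the truncation argument by a direct citation of that monotonicity (or, as in the paper, passing to $\Omega^*$ and using Chapter 11 of \cite{HKM}) closes the proof cleanly. Everything else, including the careful tracking of the regime $r<1$ where $\phi_{a,\sigma}$ is non-truncated, is correct and in fact slightly more explicit than the paper on that point.
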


\begin{proof}
    Let $a\in\partial\Omega$. Let $0<r<\min\{r_0, \frac{1}{2}\rm diam(\Omega)\}$, where $r_0$ is the same as in the definition of GHMD. Then we have
    $$
      \chi_{\R^N\setminus\left(\Omega\cup B_r(a)\right)}(\xi)\leq r^{-\sigma}\phi_{a,\sigma}(\xi),\quad\rm for\ \xi\in\R^N\setminus\Omega.
    $$
    Hence, the comparison principle condition implies that
    \[
        \begin{split}
            \omega\left(\R^N\setminus\left(\Omega\cup B_r(a)\right),\Omega^*;|t|^\theta\right)&=\overline{P}_{\Omega^*}\chi_{\R^N\setminus\left(\Omega\cup B_r(a)\right)}
            \leq r^{-\sigma}\overline{P}_{\Omega^*}\phi_{a,\sigma}
        \end{split}    
    \]
    Then by condition (ii) and the definition of fractional harmonic measure, we obtain
    \[
    \begin{split}
        \omega\left(\R^N\setminus\left(\Omega\cup B_r(a)\right),\Omega;s\right)(x)&=\omega\left(\R^N\setminus\left(\Omega\cup B_r(a)\right),\Omega^*;|t|^\theta\right)(x)\\
        &\leq C_2r^{-\sigma}|x-a|^\sigma\quad\text{for\ $\forall x\in\Omega\cap B_r(a)$},
    \end{split}
    \]
    which concludes condition (iii).
\end{proof}

\begin{Lemma}
    ${\rm LHMD}^s_{\sigma^\prime}$ $\Rightarrow$ condition $\rm (ii)$ with $\sigma^\prime>\sigma$.
\end{Lemma}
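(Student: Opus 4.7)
The plan is to iterate a one-step decay estimate on a geometric sequence of balls $B_{r_k}(a)$ with $r_k=\lambda^k r_0$, exploiting the strict gap $\sigma'-\sigma>0$ to absorb a multiplicative constant at each step. Fix $a\in\partial\Omega$ and abbreviate $u:=P_\Omega\phi_{a,\sigma}$; since $0\leq\phi_{a,\sigma}\leq 1$, the comparison principle gives $0\leq u\leq 1$ on $\Omega$.

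The core ingredient I will establish first is a one-step decay inequality: for every $r\in(0,r_0)$ and every $x\in\Omega\cap B_r(a)$,
\begin{equation*}
u(x)\;\leq\;r^\sigma\;+\;M(r)\,\omega\bigl((B_r(a))^c,\,\Omega\cap B_r(a);\,s\bigr)(x),
\end{equation*}
where $M(r):=\sup_{\Omega\cap\overline{B_r(a)}}u$. The idea is to split the non-local boundary data for $u$ relative to the subdomain $\Omega\cap B_r(a)$: on $\Omega^c\cap B_r(a)$ one has $u=\phi_{a,\sigma}\leq r^\sigma$, while on $\R^N\setminus B_r(a)$ the only control is the global bound $M(r)$, which is picked up with weight exactly the fractional harmonic measure of $(B_r(a))^c$ in $\Omega\cap B_r(a)$. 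I will justify this by passing to the Caffarelli--Silvestre extension, applying the weighted comparison principle on $\Omega^*\cap\BB_r(a,0)$, and invoking Theorem \ref{thm-cs-frachm} for the subdomain $\Omega\cap B_r(a)$ to identify the weighted harmonic measure with the fractional one.

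Given the one-step decay, the second step is the iteration. Choose $\lambda\in(0,1)$ so small that $C_3\lambda^{\sigma'-\sigma}\leq 1/2$, which is possible precisely because $\sigma'>\sigma$. Set $r_k:=\lambda^k r_0$ and $M_k:=M(r_k)$. Applying the one-step decay at scale $r_k$ together with $\mathrm{LHMD}^s_{\sigma'}$ (noting $|x-a|/r_k\leq\lambda$ for $x\in B_{r_{k+1}}(a)$) yields the recursion
\begin{equation*}
M_{k+1}\;\leq\;r_k^\sigma\;+\;C_3\lambda^{\sigma'}\,M_k.
\end{equation*}
A direct induction then gives $M_k\leq A\,r_k^\sigma$ with $A:=\max\{2\lambda^{-\sigma},\,r_0^{-\sigma}\}$, using the base case $M_0\leq 1\leq A r_0^\sigma$. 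For $x\in\Omega$ with $|x-a|\leq r_0$, picking $k$ so that $r_{k+1}\leq|x-a|\leq r_k$ produces $u(x)\leq M_k\leq A\lambda^{-\sigma}|x-a|^\sigma$; for $|x-a|>r_0$ the trivial bound $u(x)\leq 1\leq r_0^{-\sigma}|x-a|^\sigma$ closes the argument, establishing condition (ii) with $C_1:=\max\{A\lambda^{-\sigma},\,r_0^{-\sigma}\}$.

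The main obstacle will be the careful justification of the one-step decay inequality in the non-local setting. Because the fractional Laplacian is non-local, $u$ on $\Omega\cap B_r(a)$ is not determined by its values on the topological boundary $\partial(\Omega\cap B_r(a))$ alone, so the classical localized maximum-principle splitting does not apply directly. The cleanest route is through the Caffarelli--Silvestre extension: on $\Omega^*\cap\BB_r(a,0)$ the extended function is $|t|^\theta$-harmonic and the weighted Perron theory of Chapter 11 of \cite{HKM} applies, so one may dominate the extension on this set by $r^\sigma$ plus a weighted-harmonic-measure multiple of $M(r)$; the identification of the resulting weighted harmonic measure with $\omega((B_r(a))^c,\Omega\cap B_r(a);s)$ via Theorem \ref{thm-cs-frachm} then yields the stated one-step bound. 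Once this decomposition is in hand, the remainder of the argument is the routine geometric iteration sketched above.
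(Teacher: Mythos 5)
The core one-step decay inequality you propose is not correct, and the error is exactly at the point you flagged as the "main obstacle." You define $M(r):=\sup_{\Omega\cap\overline{B_r(a)}}u$ and claim that on $\R^N\setminus B_r(a)$ the control on the exterior datum is $M(r)$. But $M(r)$ is a supremum over the interior region $\Omega\cap\overline{B_r(a)}$, which is essentially disjoint from $\R^N\setminus B_r(a)$; it does not bound $u=P_\Omega\phi_{a,\sigma}$ there, and indeed on $\Omega^c\setminus B_1(a)$ we have $u=\phi_{a,\sigma}=1$. If anything, the non-local maximum principle gives the opposite inequality $M(r)\leq\sup_{\R^N\setminus(\Omega\cap B_r(a))}u$. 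Consequently, the comparison principle applied to $\Omega\cap B_r(a)$ yields only
\begin{equation*}
u(x)\;\leq\;r^\sigma\;+\;1\cdot\omega\bigl((B_r(a))^c,\,\Omega\cap B_r(a);\,s\bigr)(x),
\end{equation*}
with coefficient $1=\sup_{\R^N}u$, not $M(r)$. Your iteration then collapses to $M_{k+1}\leq r_k^\sigma+C_3\lambda^{\sigma'}$, which is bounded below by the fixed constant $C_3\lambda^{\sigma'}$ and does not produce the decay $M_k\lesssim r_k^\sigma$. Passing to the Caffarelli--Silvestre extension does not repair this either: the extended function on the spherical cap $\Omega^*\cap\s_r(a,0)$ is likewise not controlled by $M(r)$, and the weighted harmonic measure of that cap in $\Omega^*\cap\BB_r(a,0)$ is a different object from $\omega\bigl((B_r(a))^c,\Omega\cap B_r(a);s\bigr)$ --- Theorem \ref{thm-cs-frachm} identifies the latter with the weighted measure in $(\Omega\cap B_r(a))^*$, which is not the same set as $\Omega^*\cap\BB_r(a,0)$.

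The structural reason the iteration fails is non-locality: at every scale the exterior datum still lives on all of $\R^N\setminus B_r(a)$ at unit size, so the "tail" contribution cannot be absorbed into a local supremum. The paper handles this by decomposing the boundary function itself, not the scales: one writes $\phi_{a,\sigma}\leq r^\sigma\chi_{B_r(a)}+\sum_{j\geq 1}(2^jr)^\sigma\chi_{A(2^{j-1}r,2^jr)}$ on $\Omega^c$, applies $P_\Omega$ term by term, dominates $P_\Omega\chi_{A(2^{j-1}r,2^jr)}$ by the global harmonic measure $\omega\bigl(\Omega^c\setminus B_{2^{j-1}r}(a),\Omega;s\bigr)$, invokes the global decay $\GHMDs_{\sigma'}$ (equivalent to $\LHMDs_{\sigma'}$), and sums the resulting geometric series $\sum_j 2^{j(\sigma-\sigma')}$, which converges precisely because $\sigma'>\sigma$. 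Each annular piece of the datum has small $L^\infty$ norm weighted by the harmonic measure of its far support, and this one-shot decomposition is exactly what the non-local structure demands in place of the local iteration you attempted.
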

\begin{proof}
    In this proof, we mainly use linearity of $-\dv\left(|t|^\theta\nabla u(x,t)\right)$ and comparison principle.
    
    Let $a\in\partial\Omega$, $x\in\Omega$, such that $r=d(x,a)$. Then based on the definition of $\phi_{a,\sigma}$ and comparison principle we have
    \[
    \begin{split}
        P_{\Omega^*}\phi_{a,\sigma}
        &\leq P_{\Omega^*}\left(r^\sigma\chi_{\partial\Omega^*}\right)+\Sigma^\infty_{j=1}(2^jr)^\sigma P_{\Omega^*}\chi_{\partial\Omega^*\cap A^*_a(2^{j-1}r,2^jr)}\\
        &\leq r^\sigma+\Sigma^\infty_{j=1}(2^jr)^\sigma\omega(\cdot, \partial\Omega^*\cap \BB_a(2^{j-1}r),\Omega^*)\quad \rm on\ \Omega.\\
        \overset{\rm(iv)}{\Rightarrow}P_{\Omega^*}\phi_{a,\sigma}(x)&\leq r^\sigma+\Sigma^\infty_{j=1}(2^jr)^\sigma C_3\left(\frac{|x-a|}{|2^jr|}\right)^{\sigma^\prime}=\left(1+\frac{2^\sigma C_3}{1-2^{\sigma^\prime-\sigma}}\right)|x-a|^\sigma.
    \end{split}
    \]

\end{proof}

\section{Proof of Theorem \ref{thm:main1} continued}

Now we prove $\GHMDs_\sigma\Leftrightarrow \LHMDs_\sigma$.

\begin{Lemma}
    $\LHMDs_\sigma\Rightarrow\GHMDs_\sigma$.
\end{Lemma}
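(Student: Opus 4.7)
The plan is a direct comparison of two fractional harmonic measures, restricted to the ball $B_r(a)$. Fix $a\in\partial\Omega$ and $0<r<r_0$, and set
\[
u:=\omega\!\left(\Omega^c\setminus B_r(a),\Omega;s\right),\qquad v:=\omega\!\left((B_r(a))^c,\Omega\cap B_r(a);s\right).
\]
Both functions are $s$-harmonic in $D:=\Omega\cap B_r(a)$ (for $u$ this follows from being $s$-harmonic on the whole of $\Omega$, together with $D\subset\Omega$; for $v$ it is by definition). My plan is to show $u\leq v$ on $D$ via the comparison principle for $s$-harmonic functions, and then invoke $\LHMDs_\sigma$ on $v$.

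To apply the comparison principle on the domain $D$, I need $u\leq v$ on the ``nonlocal boundary'' $\R^N\setminus D=\Omega^c\cup(\Omega\setminus B_r(a))$. I would split this into three regions and check:
\begin{itemize}
\item on $\Omega^c\cap B_r(a)$: both $u$ and $v$ equal $0$ (the prescribed boundary data);
\item on $\Omega^c\setminus B_r(a)$: both $u$ and $v$ equal $1$;
\item on $\Omega\setminus B_r(a)\subset(B_r(a))^c$: $v\equiv 1$ by its prescribed boundary data, while $u\leq 1$ by the maximum principle applied on $\Omega$ (since the boundary datum $\chi_{\Omega^c\setminus B_r(a)}$ is bounded by $1$).
\end{itemize}
Hence $u\leq v$ on $\R^N\setminus D$. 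Then the comparison principle for $s$-harmonic functions (as used in the references \cite{KKP, LL}) gives $u\leq v$ on $D$. Combining with $\LHMDs_\sigma$ applied to $v$, for every $x\in\Omega\cap B_r(a)$,
\[
\omega\!\left(\Omega^c\setminus B_r(a),\Omega;s\right)(x)=u(x)\leq v(x)\leq C_3\left(\frac{d(x,a)}{r}\right)^\sigma,
\]
which is exactly $\GHMDs_\sigma$ with $C_2=C_3$ and the same $r_0$.

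The only genuinely delicate point in the argument is the behaviour on the intermediate region $\Omega\setminus B_r(a)$, where the two functions have different natures: $v$ is forced to equal $1$ there (it is part of its prescribed data), while $u$ is only $s$-harmonically determined there and merely satisfies $u\leq 1$. Once the global $L^\infty$ bound $u\leq 1$ is in hand, the comparison goes through mechanically, and no iteration or capacity estimate is needed at this step.
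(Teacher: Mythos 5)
Your proof is correct and follows essentially the same route as the paper: both compare $\omega\!\left(\Omega^c\setminus B_r(a),\Omega;s\right)$ with $\omega\!\left((B_r(a))^c,\Omega\cap B_r(a);s\right)$ on $\Omega\cap B_r(a)$ by checking that the former's ``boundary'' values are dominated by the latter's on $\R^N\setminus(\Omega\cap B_r(a))$, and then invoking the comparison principle. The only presentational difference is that the paper routes the comparison through the Caffarelli--Silvestre extension in $\R^{N+1}$ (restating the inequality for $|t|^\theta$-weighted Perron solutions on $(\Omega\cap B_r(a))^*$, invoking regularity of the extended domain, and transferring back via Theorem \ref{thm-cs-frachm}), whereas you stay directly in $\R^N$ with the fractional comparison principle from \cite{KKP, LL} -- a cosmetic rather than substantive distinction.
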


\begin{proof}
    Let $a\in\partial\Omega$ and $0<r<r_0$, where $r_0$ is specified in definition of $\LHMDs_\sigma$. By noticing that
    $$
        \chi_{\R^N\setminus B_r(a)}\geq \omega\left(\partial\Omega^*\setminus \BB_r(a),\Omega^*; |t|^\theta\right)\quad {\rm on}\quad \R^N\setminus\left(\Omega\cap B_r(a)\right),
    $$
    we obtain from the comparison principle that on $\left(\Omega\cap B_r(a)\right)^*$ 
    \[
    \begin{split}
        \omega\left(\R^N\setminus B_r(a),\left(\Omega\cap B_r(a)\right)^*; |t|^\theta\right)&=\overline{P}_{\left(\Omega\cap B_r(a)\right)^*}\chi_{\R^N\setminus B_r(a)}\\
        &\geq\overline{P}_{\left(\Omega\cap B_r(a)\right)^*}\omega\left(\partial\Omega^*\setminus \BB_r(a),\Omega^*; |t|^\theta\right).
    \end{split}
    \]
    We know $\Omega$ is a $s$-regular domain in $\R^N$, so $\Omega^*$ is $|t|^\theta$-regular in $\R^{N+1}$ by Lemma \ref{lem-besovcapest}, then every point on $\partial\left(\Omega\cap\overline{B_r(a)}\right)^*$ is $|t|^\theta$-regular boundary point for $\left(\Omega\cap\overline{B_r(a)}\right)^*$ by \cite{BB1}. Since the upper Perron solution is the largest $|t|^\theta$-harmonic solution to the Dirichlet problem with given boundary data, we have
    $$
        \omega\left(\R^N\setminus B_r(a),\left(\Omega\cap B_r(a)\right)^*; |t|^\theta\right)
        \geq
        \omega\left(\partial\Omega^*\setminus \BB_r(a),\Omega^*; |t|^\theta\right)\quad\text{on}\ \left(\Omega\cap B_r(a)\right)^*.
    $$
    Then we get the desired result from assumption $\LHMDs_\sigma$ on $\Omega\cap B_r(a)$ and Theorem \ref{thm-cs-frachm}.

\end{proof}

\subsection{$\GHMDs_\sigma\Rightarrow\LHMDs_\sigma$}

We have proved $\LHMDs_\sigma$ implies $\GHMDs_\sigma$. For the converse part, firstly we need to define {\it uniformly perfect}.
Here we denote $A(x,r_1,r_2)$ be the annulus $B_{r_2}(x)\setminus B_{r_1}(x)$ with center $x$ and radii $0<r_1<r_2$.
\begin{Def}
    Given a set $E\in\R^N$, we say $E$ is {\it uniformly perfect} if there are constants $0<C_5<1$ and $r_0>0$ such that $A(x,C_5r, r)\cap E\neq\emptyset$ for every $x\in E$ and all $0<r<r_0$.
\end{Def}

Here we use two lemmas to conclude the desired result.
\begin{Lemma}\label{lem-gtoperf}
    If $\Omega$ satisfies the $\GHMDs_\sigma$ for some $\sigma\in(0,1)$, then $\partial\Omega$ is uniformly perfect.
\end{Lemma}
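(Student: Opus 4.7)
The plan is to argue by contradiction. Assume $\partial\Omega$ is not uniformly perfect, so for every $\tau\in(0,1)$ there exist $a\in\partial\Omega$ and $r\in(0,r_0)$ with $A(a,\tau r,r)\cap\partial\Omega=\emptyset$, where $r_0<\diam{\Omega}/2$. Since the annulus is open and connected (as $N\geq 2$) and disjoint from $\partial\Omega$, it lies entirely in $\Omega$ or entirely in $\mathrm{int}(\Omega^c)$. The second alternative can be ruled out: openness of $\Omega$ forces $\Omega\cap S_{\tau r}(a)=\Omega\cap S_r(a)=\emptyset$, so $\Omega=(\Omega\cap B_{\tau r}(a))\sqcup(\Omega\setminus\overline{B_r(a)})$ is a disjoint open decomposition; as $\Omega$ is connected and $\Omega\cap B_{\tau r}(a)\neq\emptyset$ (because $a\in\overline{\Omega}$), we obtain $\Omega\subset B_{\tau r}(a)$, whence $\diam{\Omega}\leq 2\tau r<2r_0<\diam{\Omega}$, a contradiction. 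Hence $A(a,\tau r,r)\subset\Omega$ and $\Omega^c\cap B_r(a)\subset\overline{B_{\tau r}(a)}$.

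For $p=2$ the fractional harmonic measure is linear in its boundary data, and $\omega(\Omega^c,\Omega;s)\equiv 1$, so
$$
\omega(\Omega^c\cap B_r(a),\Omega;s)(x)+\omega(\Omega^c\setminus B_r(a),\Omega;s)(x)=1,\qquad x\in\Omega.
$$
Fix $\delta>0$ with $C_2\delta^\sigma<\tfrac12$ (depending only on $C_2$ and $\sigma$), and impose $\tau<\delta$. Pick $x\in\Omega\cap S_{\delta r}(a)\subset A(a,\tau r,r)$. Then $\GHMDs_\sigma$ gives $\omega(\Omega^c\setminus B_r(a),\Omega;s)(x)\leq C_2\delta^\sigma<\tfrac12$, so $u(x):=\omega(\Omega^c\cap B_r(a),\Omega;s)(x)\geq\tfrac12$.

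The contradiction comes from an upper bound on $u(x)$ that tends to $0$ as $\tau\downarrow 0$. I would pass to the Caffarelli--Silvestre extension (Theorem~\ref{thm-cs-frachm}): $u^*$ is $|t|^\theta$-harmonic in $\R^{N+1}\setminus(\Omega^c\times\{0\})$, vanishes at infinity, and has nonzero ``boundary data'' supported in $\overline{\BB_{\tau r}(a,0)}\cap\{t=0\}$. On the exterior $\R^{N+1}\setminus\overline{\BB_{\tau r}(a,0)}$, compare $u^*$ with a radial $|t|^\theta$-harmonic supersolution $g$ satisfying $g=1$ on $\s_{\tau r}(a,0)$ and $g(\infty)=0$; by the weighted maximum principle $u^*\leq g$ there, and the natural scaling together with $N-2s>0$ gives the decay $g(z)\lesssim(\tau r/|z-(a,0)|)^{N-2s}$, so $u(x)=u^*(x,0)\lesssim(\tau/\delta)^{N-2s}\to 0$, contradicting $u(x)\geq\tfrac12$. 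The main technical obstacle is producing the radial $|t|^\theta$-weighted supersolution with the correct polynomial decay exponent, since the Green kernel of $L_\theta$ is not rotationally symmetric in $\R^{N+1}$; one can either compute the radial profile explicitly (exploiting $N-2s>0$) or iterate a weighted Harnack/maximum-principle argument via the $A_2$-Poincar\'e machinery of Section~3. An alternative route dispenses with the extension and estimates $u(x)$ directly through the fractional Poisson representation on the exterior of a ball, which produces the same $(\tau/\delta)^{N-2s}$-type tail.
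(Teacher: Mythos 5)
Your approach is genuinely different from the paper's and, modulo one point you flag as an obstacle, it is correct.

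The paper's proof does not estimate a harmonic measure directly at a test point; instead it applies $\GHMDs_\sigma$ twice (once at the center $a$ and once at a second boundary point $b\in\partial B_{\rho_2}(a)$) to pin the capacitary potential $u_E$ above $2/3$ near $a$ and below $1/3$ near $b$, then combines an easy monotonicity upper bound $\rcap_{|t|^\theta}(E,\Omega^*\cup E)\lesssim\eta^{N-2s}$ with a fractional Poincar\'e inequality to produce a lower bound $\gtrsim\gamma^{N-2s}$; the contradiction is a capacity inequality that forces $\gamma/\eta$ bounded. Your route uses linearity and only one application of $\GHMDs_\sigma$ to get a \emph{lower} bound $\geq 1/2$ on the harmonic measure of the tiny ball, and then needs a matching \emph{upper} decay bound; the contradiction is in the harmonic measure itself. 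Both are valid; the paper's argument is more robust to loss of linearity, while yours is shorter once the barrier is in hand.

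The ``main technical obstacle'' you describe is not actually there. The function $g_0(z)=|z-(a,0)|^{-(N-2s)}$ is genuinely $L_\theta$-harmonic on $\R^{N+1}\setminus\{(a,0)\}$: writing $\theta=1-2s$, $\alpha=N-2s$ and $z=(x,t)$, one computes
\begin{equation*}
\dv\bigl(|t|^\theta\nabla|z|^{-\alpha}\bigr)=-\alpha\,\dv\bigl(|t|^\theta|z|^{-\alpha-2}z\bigr)
=-\alpha\bigl(N+\theta-\alpha-1\bigr)|t|^\theta|z|^{-\alpha-2},
\end{equation*}
and the prefactor $N+\theta-\alpha-1=N+(1-2s)-(N-2s)-1=0$ vanishes identically. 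This is exactly the fundamental-solution cancellation one expects from the Caffarelli--Silvestre extension with pole on the hyperplane $\{t=0\}$; the Green kernel of $L_\theta$ is non-radial only for poles with $t\neq 0$. So $g(z)=\min\{1,(\tau r/|z-(a,0)|)^{N-2s}\}$ is $L_\theta$-superharmonic on $\R^{N+1}$, dominates $\chi_{\Omega^c\cap B_r(a)}$ on $\partial\Omega^*$, and vanishes at $\infty$, hence lies in the upper class for $u^*$; the comparison yields the clean bound $u(x)\leq(\tau/\delta)^{N-2s}$ at $|x-a|=\delta r$ with no implicit constant. Since $N\geq 2$ and $s<1$ guarantee $N-2s>0$, choosing $\tau<\delta\cdot 2^{-1/(N-2s)}$ contradicts $u(x)\geq 1/2$, and your argument closes. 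Fill in this computation (or cite it as the fundamental solution of the extension operator) and the proof is complete; as written, the gap is only that you stopped short of verifying the one identity that makes the radial barrier work.
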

\begin{proof}
    Let $a\in\partial\Omega$, and let $0<\rho_1<\rho_2<\diam{\Omega}/2$.
    
    Suppose that $\partial\Omega\cap A(a, \rho_1,\rho_2)=\emptyset$, we prove that $\rho_1/\rho_2\geq\varepsilon>0$.
    W.L.O.G we suppose that $C_4>1$ and $C_4\rho_1\leq\rho_2/(2C_4)$. Since $A(a,\rho_1,\rho_2)\cap\partial\Omega=\emptyset$, we have
    $A(a, C_4\rho_1, \rho_2/C_4)\subset\Omega$. We write $\eta=C_4\rho_1$ and $\gamma=\rho_2/C_4$, then 
    \begin{equation}\label{equ:gtol1}
        A(a,\eta, \gamma)\subset\Omega\subset\Omega^*.
    \end{equation}
    Generally we can choose appropriate $\rho_2$ such that $\partial B_{\rho_2}(a)\cap\partial\Omega=b$. Then by noticing \ref{equ:gtol1} we define
    $$
        E=\overline{\BB_\eta(a)}\setminus\Omega^*=\BB_\gamma(a)\setminus\Omega^*,
    $$
    which is not empty since $a\in\partial\Omega\subset\partial\Omega^*$.
    Then we have 
    \begin{equation}\label{equ:gtol2}
        \rcap_{|t|^\theta}(E,\Omega^*\cup E)\leq \rcap_{|t|^\theta}(\overline{\BB_\eta(a)}, \BB_\gamma(a))\leq C\eta^{N-2s}.
    \end{equation}

    Denote $u_E$ as the potential for condenser $(E,\Omega^*\cup E)$, i.e., $u_E=1$ quasi-everywhere on $E$, and $u_E=0$ quasi-everywhere on $\R^{N+1}\setminus\left(\Omega^*\cup E\right)$.
    Since $\eta\leq \gamma/2$, $A(a,\eta, \gamma)\cap\partial\Omega=\emptyset$ and $b\in\partial\Omega\cap\partial B(a,\rho_2)$, by comparison principle we obtain
    $$
        u_E\leq \omega\left(\partial\Omega^*\setminus B_{\gamma/2}(b),\Omega^*; |t|^\theta\right)\quad {\rm on}\ \Omega, 
    $$
    which implies 
    $$
        u_E(x)\overset{\GHMDs_\sigma}{\leq} C_2\left(\frac{d(x,b)}{\gamma/2}\right)^\sigma, \quad{\rm for}\ x\in\Omega\cap B_{\gamma/2}(b).
    $$
    Then we can choose $\beta=\frac{1}{2(3C_2)^{1/\sigma}}$ such that $u_E\leq 1/3$ on $B_{\beta\gamma}(b)$ since $u_E=0$ on $B_{\gamma/2}(b)\setminus\Omega$.

    As $E=\BB_\gamma(a)\setminus\Omega^*$, $A(a,\eta,\gamma)\subset\Omega$,  by Theorem 11.4 and Corollary 11.8 in \cite{HKM} we obtain that
    $$
        u_E=1-\omega\left(\partial\Omega^*\setminus B_\gamma(a),\Omega^*; |t|^\theta\right)\quad{\rm on}\ \Omega.
    $$
    Since $u_E=1$ on $B_{\beta\gamma}(a)\setminus\Omega\subset B_\gamma(a)\setminus\Omega$, then by $\GHMDs_\sigma$ on $\Omega$ we have
    $$
        u_E\geq 2/3\quad {\rm on} \ B_{\beta\gamma}(a).
    $$
    Define $v=\max\{u_E,1/3\}-1/3$. By the definition of $\beta$ we have
    $$
        \frac{|\{x\in B_{2\rho_2}(a):v(x)=0\}|}{|B_{2\rho_2}(a)|}\geq\frac{|B_{\beta\gamma}(b)|}{|B_{2\gamma}(a)|}\geq C_\beta>0.
    $$
    Then by fractional Poincar\'e inequality (see e.g. Lemma 2.4 in \cite{BLP}) and the definition of $\gamma$ we obtain
    \[
    \begin{split}
        \gamma^s\left(\int_{B_{2\rho_2}(a)}\fint_{B_{2\rho_2}(a)}\frac{|v(x)-v(y)|^2}{|x-y|^{N+2s}}\,dxdy\right)^{1/2}&\geq\left(\fint_{B_{2\rho_2}(a)}v^2\right)^{1/2}\\
        &\geq \left(\fint_{B_{\beta\gamma}(a)}(1/3)^2\right)^{1/2},
    \end{split}
    \]
    which implies
    \[
    \begin{split}
        \rcap_{|t|^\theta}(E,\Omega^*\cup E)&=\int_{\R^{N+1}}|\nabla u_E|^2|t|^\theta\,dxdt\overset{\ref{global-besov}}{\geq}\int_{\R^N}\int_{\R^N}\frac{|u_E(x)-u_E(y)|^2}{|x-y|^{N+2s}}\,dxdy\\
        &\geq\int_{B_{2\rho_2}(a)}\int_{B_{2\rho_2}(a)}\frac{|u_E(x)-u_E(y)|^2}{|x-y|^{N+2s}}\,dxdy\\
        &\geq \int_{B_{2\rho_2}(a)}\int_{B_{2\rho_2}(a)}\frac{|v(x)-v(y)|^2}{|x-y|^{N+2s}}\,dxdy\\
        &\geq C\gamma^{-2s}|B_{2\rho_2}(a)|\geq C\gamma^{N-2s},
    \end{split}
    \]
    which together with \ref{equ:gtol2} yields that $\gamma/\eta$ is bounded, and so is $\rho_2/\rho_1$. 
\end{proof}

\begin{Lemma}
    $\GHMDs_\sigma\Rightarrow\LHMDs_\sigma$.
\end{Lemma}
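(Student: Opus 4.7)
The plan is to split the boundary data of $u:=\omega((B_r(a))^c,\Omega\cap B_r(a);s)$ into a piece controlled by $\GHMDs_\sigma$ and a piece treated by iteration on the Caffarelli--Silvestre extension using the uniform perfectness just obtained in Lemma \ref{lem-gtoperf}. Fix $a\in\partial\Omega$ and $0<r<r_0$, and decompose by linearity $u=u_1+u_2$ with
\begin{equation*}
u_1:=\omega\left(\Omega^c\setminus B_r(a),\Omega\cap B_r(a);s\right),\quad u_2:=\omega\left(\Omega\setminus B_r(a),\Omega\cap B_r(a);s\right),
\end{equation*}
so that the boundary data sum to $\chi_{(B_r(a))^c}$ on $\R^N\setminus(\Omega\cap B_r(a))$. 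Since $\omega(\Omega^c\setminus B_r(a),\Omega;s)-u_1$ is fractional-harmonic on $\Omega\cap B_r(a)$ with boundary values $0$ on $\Omega^c$ and $\omega(\Omega^c\setminus B_r(a),\Omega;s)\geq 0$ on $\Omega\setminus B_r(a)$, the comparison principle gives
\begin{equation*}
u_1(x)\leq\omega\left(\Omega^c\setminus B_r(a),\Omega;s\right)(x)\leq C_2\left(d(x,a)/r\right)^\sigma
\end{equation*}
by $\GHMDs_\sigma$, exactly the bound required.

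For $u_2$, whose boundary data vanish on all of $\Omega^c$, pass to the Caffarelli--Silvestre extension: $u_2^*$ is $|t|^\theta$-harmonic in $(\Omega\cap B_r(a))^*$, vanishes on $\Omega^c\times\{0\}$, equals $1$ on $(\Omega\setminus B_r(a))\times\{0\}$, and tends to $0$ at infinity (since $\Omega$ is bounded). Put $\lambda=C_5/2\in(0,\tfrac12)$ where $C_5$ is the uniform-perfectness constant of Lemma \ref{lem-gtoperf}, set $r_k=\lambda^k r$, and for each $k\geq 1$ pick, via uniform perfectness, a point $b_k\in\partial\Omega\cap A(a,C_5 r_{k-1},r_{k-1})$. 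The point $(b_k,0)$ is an $s$-regular boundary point of $(\Omega\cap B_r(a))^*$ (by Lemma \ref{lem-equivalentcap}), so $u_2^*$ attains its zero boundary value continuously there. Rescaling $x\mapsto (x-a)/r_{k-1}$, $t\mapsto t/r_{k-1}$ -- under which $L_\theta$ is scale-invariant up to a harmless universal factor -- produces a universal neighbourhood $\BB_{c r_k}(b_k,0)$ on which $u_2^*\leq\tfrac12 M_{k-1}$, where $M_k:=\sup_{\BB_{r_k}(a,0)}u_2^*$. Iterating Corollary \ref{col-chaindecay} along a chain of uniformly many overlapping extension balls connecting $\BB_{c r_k}(b_k,0)$ to $\BB_{r_k}(a,0)$ then yields $M_k\leq\gamma M_{k-1}$ for a universal $\gamma\in(0,1)$; iteration gives $u_2(x)\leq C(d(x,a)/r)^{\sigma'}$ with $\sigma':=\log(1/\gamma)/\log(1/\lambda)$. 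Combining the two bounds establishes $\LHMDs_{\min(\sigma,\sigma')}$, and the final sentence of Theorem \ref{thm:main1} absorbs the potential loss from $\sigma$ to $\min(\sigma,\sigma')$.

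The main obstacle is establishing scale-invariance of the ``smallness'' neighbourhood around $(b_k,0)$. Lemma \ref{lem-decaydg} in its stated form requires a set of positive $|t|^\theta$-measure on which $u_2^*$ is small, yet the flat zero-set $\Omega^c\times\{0\}$ carries zero $|t|^\theta$-measure, and continuity at a single regular point produces a neighbourhood of smallness with an a priori uncontrolled radius -- which would let the number of chain balls in Corollary \ref{col-chaindecay} blow up with $k$ and destroy the iteration. Producing a universal boundary modulus of continuity for $|t|^\theta$-harmonic functions at $s$-regular points -- quantitative, scale-invariant, and depending only on $N$, $s$ and the perfectness constant $C_5$ -- is the technical heart of the argument; it will draw on the De Giorgi/Moser theory of Section \ref{sec-dgclass}, Remark \ref{rem-har-mini}, the equivalence of capacities in Lemma \ref{lem-equivalentcap}, and the standing $s$-regularity of $\Omega$, and may require first extracting enough fatness of $\R^N\setminus\Omega$ from the $\GHMDs_\sigma$ hypothesis to apply a capacitary decay estimate in place of the volume-density Lemma \ref{lem-decaydg}.
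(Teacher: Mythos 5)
Your decomposition $u = u_1 + u_2$ and the bound on $u_1$ are correct: comparing with $\omega(\Omega^c\setminus B_r(a),\Omega;s)$ and invoking $\GHMDs_\sigma$ once cleanly handles the piece of the boundary data carried by $\Omega^c\setminus B_r(a)$. But the gap you yourself flag in the $u_2$ part is a genuine one, and your proposed repairs do not close it. Extracting ``enough fatness of $\R^N\setminus\Omega$'' from $\GHMDs_\sigma$ is circular in the architecture of this paper: fatness is tied to $\LHMDs_\sigma$ through Lemma \ref{lem-capden-lhmd}, so one cannot import a fatness-based capacitary decay estimate while proving $\GHMDs_\sigma\Rightarrow\LHMDs_\sigma$. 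And the alternative --- a scale-invariant boundary modulus of continuity at a single $s$-regular point --- is exactly what $s$-regularity alone does not give you; without a quantitative density hypothesis the radius on which $u_2^*\leq\tfrac12 M_{k-1}$ is uncontrolled, so the number of balls in the Corollary \ref{col-chaindecay} chain can blow up with $k$ and the geometric decay $M_k\leq\gamma M_{k-1}$ is not justified.

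The paper's proof sidesteps the iteration entirely. It works at a single scale $\rho\simeq r$ (not an infinite sequence $r_k$), and it does not decompose the measure $u$ you study; instead it studies the complementary harmonic measure $u=\omega\left(\partial\Omega^*\cap\BB_{\eta r}(a),\Omega^*;|t|^\theta\right)$, using the identity $\omega(E,\Omega^*)+\omega(\partial\Omega^*\setminus E,\Omega^*)=1$ (Proposition 9.31 and Theorem 11.4 in \cite{HKM}) to convert at the end. Uniform perfectness (Lemma \ref{lem-gtoperf}) produces one boundary point $z\in S_\rho(a)\cap\partial\Omega$, and $\GHMDs_\sigma$ is invoked a second time to give $u\leq\tfrac12-\varepsilon$ on the thin slice $\BB_{\beta r}(z)\cap(\Omega\times\{0\})$. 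The device that upgrades this zero-measure information to a full-dimensional one --- precisely the step you are missing --- is that $u$, extended by $0$ outside $\Omega^*$, is genuinely $|t|^\theta$-harmonic on the whole annular union $\bigcup_j\BB_{4cr}(z_j)$ (Theorem 11.6 in \cite{HKM}, Remark \ref{rem-har-mini}), so the interior oscillation estimate of Lemma \ref{lem-holderdg} propagates the bound from the slice to the full ball $\BB_{\beta r}(z)$, which then has positive $|t|^\theta$-measure and seeds Lemma \ref{lem-decaydg} and Corollary \ref{col-chaindecay} along a chain of boundedly many balls covering $S_\rho(a)$. Since the chain length is fixed (depending only on $N$ and the covering constant $c$), no control on an iterated quantity is needed.

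In short: your splitting gives a clean account of the $u_1$ part, but the heart of the lemma is exactly the $u_2$-type estimate, and there the paper's single-scale chain argument combined with the interior-to-slice oscillation trick is what is required; the iterative scheme you propose would first need a quantitative boundary-regularity input that, in this framework, is only available \emph{after} $\LHMDs_\sigma$ has been established.
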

\begin{proof}
    Let $a\in\partial\Omega$ and $0<r<r_0$. By Lemma \ref{lem-gtoperf}, we know $\GHMDs_\sigma$ implies uniformly perfectness of $\partial\Omega$, then we can find some $\rho>0$ such that $S_\rho(a)\cap\partial\Omega\neq\emptyset$ with $C_5r\leq\rho<r$.
    
    Let $c$ be a small positive number to be determined later. 
    It's obvious to see that we can find finitely many points $z_1,\ z_2,\ ...,\ z_n\in A(a,\rho/C_4,C_4\rho)\subset\R^N$ such that $S_\rho(a)\subset\bigcup^n_{j=1}\BB_{cr}(z_j)$, and $\{\BB_{cr}(z_j)\}$ form a chain, which is defined as, for $\forall k,\ell\in\{1,\ 2,\ ...,\ n\}$, there is a sub-collection of balls $\{\BB_1,\ \BB_2,\ ...,\ \BB_t\}\subset\{\BB_{cr}(z_j)\}$ such that $\BB_{cr}(z_k)=\BB_1$ and $\BB_{cr}(z_\ell)=\BB_t$, also $\BB_i\cap \BB_{i+1}\neq\emptyset$ for $\forall i\in\{1,\ 2,\ ...,\ t\}$. Here $n$ only depends on $c$ and $N$.

    By scaling we easily see that
    \begin{equation}\label{equ-gtol4}
        \begin{split}
            \bigcup^n_{j=1}B_{4cr}(z_j)&\subset A\left(a, \frac{\rho}{C_4}-4cr, C_4\rho+4cr\right)\\
            &\subset A\left(a, \left(\frac{C_5}{C_4}-4c\right)r, \left(C_4+4c\right)r\right).
        \end{split}
    \end{equation}
    Now let $c$ be small enough such that 
    \begin{equation}\label{equ-gtol5}
        4c\leq\frac{C_5}{2C_4}:=\eta.
    \end{equation}
    Define
    \begin{equation}\nonumber
        u=\left\{
        \begin{split}
            \omega\left(\partial\Omega^*\cap \BB_{\eta r}(a), \Omega^*; |t|^\theta\right)&\quad{\rm on}\ \Omega^*,\\
            0\qquad\qquad&\quad{\rm on}\ \R^{N+1}\setminus\Omega^*.
        \end{split}
        \right.
    \end{equation}
    
    Then $0\leq u\leq 1$ on $\R^{N+1}$ is a $|t|^\theta$-subminimizer in $\R^{N+1}\setminus\overline{\BB_{\eta r}(a)}\supsetneqq\bigcup^n_{j=1}\BB_{4cr}(z_j)$ by \ref{equ-gtol4} and \ref{equ-gtol5}. 
    We may assume and fix one point $z\in\partial\Omega\cap S_\rho(a)$ such that $z\in B_{cr}(z_1)$ without loss of generality. Obviously $\BB_{3cr}(z)\subset \BB_{4cr}(z_1)\subsetneqq\R^{N+1}\setminus\overline{\BB_{\eta r}(a)}$, then it follows from the comparison principle that
    $$
        u\leq\omega\left( \partial\Omega^*\setminus \BB_{3cr}(z), \Omega^*; |t|^\theta\right)\quad{\rm on}\ \Omega^*.
    $$
    Then by $\GHMDs_\sigma$ property on $\Omega$ we have
    $$
        u\leq 1/2-\varepsilon\quad{\rm on} \ \BB_{\beta r}(z)\cap\Omega
    $$
    for some $\beta>0$ independent of $a$ and $r$ and for small enough $\forall\varepsilon>0$ depending on $\beta$.

    In fact by Theorem 11.6 in \cite{HKM} and Remark \ref{rem-har-mini} we know $u=\omega\left( \partial\Omega^*\cap \BB_{\eta r}(a), \Omega^*; |t|^\theta\right)$ on  $\bigcup^n_{j=1}\BB_{4cr}(z_j)$, hence $|t|^\theta$-harmonic on $\bigcup^n_{j=1}\BB_{4cr}(z_j)$. Then we know that
    $$
        u, -u\in DG_{|t|^\theta}\left(\bigcup^n_{j=1}\BB_{4cr}(z_j)\right).
    $$
    Hence by Lemma \ref{lem-holderdg} we can make $\beta$ small enough such that
    $$
        \max\limits_{\BB_{\beta r}(z)} u\leq 1/2-\varepsilon+\mathop{\rm osc}\limits_{\BB_{\beta r}(z)}u\leq 1/2
    $$
    based on the fact $0\leq u\leq 1$ on $\R^{N+1}$. 
    Then by choosing $\rho_1=2cr$ it follows from Lemma \ref{lem-decaydg} that $u\leq 1-\varepsilon_1$ on $\BB_{cr}(z_1)$ for some $\varepsilon_1$ independent of $a$ and $r$.
    Since $\bigcup^n_{j=1}\BB_{cr}(z_j)$ is a chain, we can always find some ball, say $\BB_{cr}(z_2)$, intersecting $\BB_{cr}(z_1)$. Then Corollary \ref{col-chaindecay} yields $u\leq 1-\varepsilon_2$ on $\BB_{cr}(z_2)$ for some $\varepsilon_2>0$. Because of finite cover of $S_\rho(a)$ by $\{\BB_{cr}(z_j)\}$, by repeating this process finite times we eventually obtain $u\leq 1-\varepsilon_0$ on $\bigcup^n_{j=1}\BB_{cr}(z_j)$ for some $\varepsilon_0>0$ which is independent of $a$ and $r$. Especially we have $u\leq 1-\varepsilon_0$ on $S_\rho(a)$. See Fig. \ref{fig:chain}.

    Since by Proposition 9.31 and Theorem 11.4 in \cite{HKM} we have
    $$
        \omega\left( \partial\Omega^*\cap \BB_{\eta r}(a), \Omega^*; |t|^\theta\right)+\omega\left( \partial\Omega^*\setminus \BB_{\eta r}(a), \Omega^*; |t|^\theta\right)=1\quad{\rm on} \ \Omega^*,
    $$
    and hence together with the fact that
    $$
        \frac{C_5}{2C_4}r=\eta r<C_5r\leq\rho,
    $$ 
    we obtain $\omega\left( \partial\Omega^*\setminus\BB_{\eta r}(a), \Omega^*; |t|^\theta\right)\geq\varepsilon_0$ on $\Omega^*\cap \s_\rho(a)$. Then by comparison principle, we obtain
    $$
        \frac{1}{\varepsilon_0}\omega\left( \partial\Omega^*\setminus \BB_{\eta r}(a), \Omega^*; |t|^\theta\right)
        \geq \omega\left(\R^N\setminus B_\rho(a), (\Omega\cap B_\rho(a))^*; |t|^\theta\right)\quad{\rm on}\ \Omega^*\cap \BB_{\rho}(a).
    $$
    Then by $\GHMDs_\sigma$ we have that for all $x\in\Omega\cap B_\rho(a)$
    \[
    \begin{split}
        \omega\left( \R^N\setminus B_r(a), (\Omega\cap B_r(a))^*; |t|^\theta\right)&\leq \omega\left(\R^N\setminus B_\rho(a), (\Omega\cap B_\rho(a))^*; |t|^\theta\right)\\
        &\leq \frac{C_2}{\varepsilon_0}\left(\frac{d(x,a)}{\eta r}\right)^\sigma.
    \end{split}
    \]
    For the inequality on the part $\Omega\cap\left(B_r(a)\setminus B_\rho(a)\right)$, we just notice the facts that 
    $\rho\geq C_5r$,
    and
    $$
        \omega\left(\R^N\setminus B_r(a), (\Omega\cap B_r(a))^*; |t|^\theta\right)\leq 1\quad{\rm on}\ \Omega\cap\left(B_r(a)\setminus B_\rho(a)\right),
    $$
    then for $x\in\Omega\cap\left(B_r(a)\setminus B_\rho(a)\right)$, we obtain
    \[
    \begin{split}
        \omega\left(\R^N\setminus B_r(a), (\Omega\cap B_r(a))^*; |t|^\theta\right)
        \leq\left(\frac{d(x,a)}{d(x,a)}\right)^\sigma
        \leq\left(\frac{d(x,a)}{\rho}\right)^\sigma
        \leq\left(\frac{d(x,a)}{C_5r}\right)^\sigma,
    \end{split}
    \]
    which yields the desired decay on $\Omega\cap\left(B_r(a)\setminus B_\rho(a)\right)$.
    Therefore the $\LHMDs_\sigma$ property follows.

    \begin{figure}[t]
        \centering
        \includegraphics[scale=0.6]{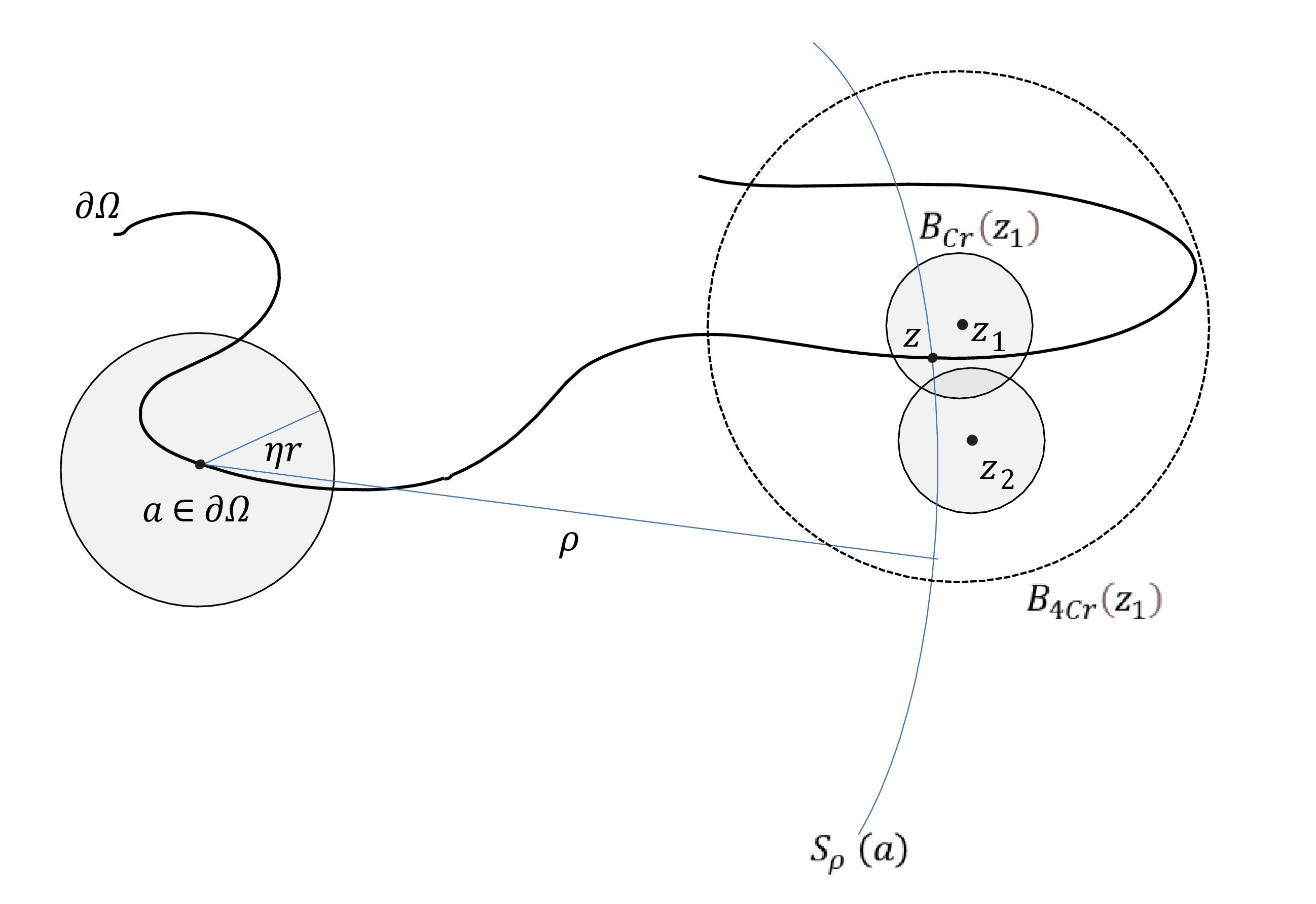}
        \caption{$S_\rho(a)\subset\bigcup^n_{j=1}\BB_{cr}(z_j)$ and $u\in DG_{|t|^\theta}\left(\bigcup^n_{j=1}\BB_{4cr}(z_j)\right)$.}
        \label{fig:chain}
    \end{figure}

\end{proof}

\section{Proof of Theorem \ref{thm:main2}}
We only need to prove the following.
\begin{Lemma}\label{lem-capden-lhmd}
    $\LHMDs_\sigma$ holds on $\Omega$ for some $\sigma>0$ if and only if $\R^N\setminus\Omega$ is uniformly $s$-fat.
\end{Lemma}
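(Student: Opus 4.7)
The strategy is to transfer everything through the Caffarelli-Silvestre extension (Theorem \ref{thm-cs-frachm}) so that the non-local fractional harmonic measure is replaced by a $|t|^\theta$-weighted harmonic measure in $\R^{N+1}$, and then to invoke the classical Wiener / Maz'ya theory for this degenerate but local elliptic equation, iterated over dyadic scales in the spirit of Aikawa-Shanmugalingam \cite{AS}. The Besov-to-weighted capacity equivalence of Lemma \ref{lem-equivalentcap} and Lemma \ref{lem-besovcapest} lets us read the fatness hypothesis \eqref{equ:fat} in either framework with no loss of constants.

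For the implication uniformly $s$-fat $\Rightarrow \LHMDs_\sigma$, fix $a\in\partial\Omega$ and $0<\rho\leq r_0$, and set
$$v_\rho := \omega\bigl((B_\rho(a))^c,\,\Omega \cap B_\rho(a);\, s\bigr).$$
By Theorem \ref{thm-cs-frachm}, its Caffarelli-Silvestre lift $v_\rho^*$ is $|t|^\theta$-harmonic off the thin trace set $E_\rho^*:=(B_\rho(a)\setminus\Omega)\times\{0\}$, vanishes on $E_\rho^*$, and equals $1$ on $(\R^N\setminus B_\rho(a))\times\{0\}$. The weighted Maz'ya-Wiener inequality of \cite{HKM}, Chapter 6, applied to $1-v_\rho^*$, yields
$$\inf_{\BB_{\rho/2}(a,0)}\bigl(1-v_\rho^*\bigr)\;\geq\; c_{N,s}\,\frac{\rcapw\bigl(E_\rho^*,\,\BB_{2\rho}(a,0)\bigr)}{\rcapw\bigl(\overline{\BB_\rho(a,0)},\,\BB_{2\rho}(a,0)\bigr)}.$$
Combining this with Lemma \ref{lem-equivalentcap}, Lemma \ref{lem-besovcapest} and the hypothesis \eqref{equ:fat} produces a universal $\eta=\eta(C_0,N,s)\in(0,1)$ with
$$\sup_{B_{\rho/2}(a)} v_\rho \;\leq\; 1-\eta\qquad\text{for every}\ 0<\rho\leq r_0.$$
A standard dyadic iteration using this one-step contraction at each scale $\rho=r/2^k$ (compare \cite{AS, KS}) converts it into
$$\omega\bigl((B_r(a))^c,\,\Omega\cap B_r(a);\,s\bigr)(x)\;\leq\; C\left(\frac{d(x,a)}{r}\right)^\sigma,\qquad \sigma:=\log_2\tfrac{1}{1-\eta},$$
which is precisely $\LHMDs_\sigma$.

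For the converse $\LHMDs_\sigma\Rightarrow$ uniformly $s$-fat, I would argue by contradiction. If uniform fatness fails, there exist $a_k\in\partial\Omega$ and $r_k\in(0,r_0)$ along which
$$\gamma_k \;:=\; \frac{\rcapb\bigl((\R^N\setminus\Omega)\cap B_{r_k}(a_k),\,B_{2r_k}(a_k)\bigr)}{\rcapb\bigl(\overline{B_{r_k}(a_k)},\,B_{2r_k}(a_k)\bigr)}\;\longrightarrow\; 0.$$
Set $w_k := 1-v_{r_k}=\omega\bigl((\R^N\setminus\Omega)\cap B_{r_k}(a_k),\,\Omega\cap B_{r_k}(a_k);\,s\bigr)$, whose extension $w_k^*$ is the $|t|^\theta$-harmonic measure of the thin trace set $E_{r_k}^*$. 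The dual upper Maz'ya estimate (\cite{HKM}, Chapter 6) combined with Lemma \ref{lem-equivalentcap} gives
$$w_k^*(x,0) \;\leq\; C\,\gamma_k\qquad \text{for every } x\in\Omega\cap B_{r_k/4}(a_k) \text{ with } \operatorname{dist}(x,E_{r_k}^*)\geq r_k/16.$$
Since $a_k\in\partial\Omega$ and the trace set has vanishing relative capacity, one can pick $x_k\in\Omega$ with $|x_k-a_k|=\varepsilon r_k$ at distance $\geq r_k/16$ from the trace for any preassigned small $\varepsilon>0$. Then $v_{r_k}(x_k)\geq 1-C\gamma_k$, while $\LHMDs_\sigma$ gives $v_{r_k}(x_k)\leq C_3\varepsilon^\sigma$. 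Choosing $\varepsilon$ so small that $C_3\varepsilon^\sigma<\tfrac12$ and then $k$ so large that $C\gamma_k<\tfrac12$ produces the contradiction.

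The main obstacle I foresee is the quantitative form of the Maz'ya-Wiener inequality in the degenerate weighted setting: the inequality is available in \cite{HKM} for every $p$-admissible weight, but one must track the $A_2$-constant of $|t|^\theta$ (and the associated doubling and $(1,2)$-Poincar\'e constants) through the argument so that the resulting $\eta$, and hence the H\"older exponent $\sigma=\log_2\tfrac{1}{1-\eta}$, depend only on $C_0$, $N$ and $s$, uniformly in $\Omega$ and the scale. Once this universal form is in hand, the dyadic iteration in the first direction and the capacity-vs-decay contradiction in the second close automatically via Lemma \ref{lem-equivalentcap} and Lemma \ref{lem-besovcapest}.
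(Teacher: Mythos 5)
Your argument for the implication (uniform $s$-fatness $\Rightarrow \LHMDs_\sigma$) is sound and close to the paper's: the paper also lifts to the $|t|^\theta$-weighted setting and obtains the H\"older decay from Maz'ya's capacitary estimate, though it invokes the pre-packaged exponential form (Lemma~\ref{lem-mod-cont}, a weighted variant of \cite{AS}, Lemma~6.2) instead of running the one-step contraction plus dyadic iteration by hand. These are two presentations of the same mechanism and both close cleanly once the Besov-to-weighted capacity comparisons (Lemmas~\ref{lem-equivalentcap}, \ref{lem-besovcapest}) are in place.

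The converse direction contains a genuine gap. You want to pick $x_k\in\Omega$ with $|x_k-a_k|=\varepsilon r_k$ \emph{and} $\operatorname{dist}(x_k,E_{r_k}^*)\geq r_k/16$, justifying the existence of such $x_k$ by ``the trace set has vanishing relative capacity.'' But small capacity does not imply metric sparsity, and more decisively, $a_k\in\partial\Omega\subset\R^N\setminus\Omega$ is itself a point of the trace set, so $\operatorname{dist}(x_k,E_{r_k}^*)\leq|x_k-a_k|=\varepsilon r_k$ always. For $\varepsilon<1/16$ the requested point simply does not exist; for $\varepsilon\geq 1/16$ the $\LHMDs_\sigma$ bound $C_3\varepsilon^\sigma$ need not be below $1$ (and there is no freedom left to shrink $\varepsilon$ because of the distance constraint), so no contradiction is forced. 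Weakening the distance requirement to $\operatorname{dist}(x_k,E_{r_k}^*)\gtrsim\varepsilon r_k$ does not help either, since the upper Maz'ya bound on the harmonic measure of $E_{r_k}^*$ degenerates as the observation point approaches the set, and in any case the existence of such interior points requires an interior corkscrew-type condition on $\Omega$ that is not assumed. The paper sidesteps this entirely by arguing directly rather than by contradiction: $\LHMDs_\sigma$ forces the complementary potential $v$ (for $\overline{\BB_r(a)}\setminus\Omega^*$) to satisfy $v\geq 1/2$ q.e.\ on the full ball $\overline{B_{r/C_8}(a)}$ (on $\Omega\cap\overline{B_{r/C_8}(a)}$ from the decay estimate, and trivially on the trace where $v=1$ q.e.), so $2v$ is an admissible test function for $\rcap_{|t|^\theta}\bigl(\overline{B_{r/C_8}(a)}\times\{0\},\BB_{2r}(a)\bigr)$, yielding the capacity lower bound without ever needing a point well inside $\Omega$. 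You should replace your contradiction argument with a direct one of this type.
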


For preparation, we need the following lemma, which is a weighted invariant of Lemma 6.2 in \cite{AS} (see also Lemma 2.16 in \cite{HKM}). For readers' convenience, we give it below.

\begin{Lemma}\label{lem-capest}
    Let $x_0\in\R^{N+1}$, $E\subset \overline{B_r(x_0)}\subset\R^{N+1}$ and $0<r\leq r_0$.
    \begin{itemize}
        \item[(i)] If $0<\ell\leq 1$, then
        $$
            \rcapw\left(\overline{B_{\ell r}(x_0)}, B_{2r}(x_0)\right)\leq\rcapw\left(\overline{B_{r}(x_0)}, B_{2r}(x_0)\right)\leq\C\rcapw\left(\overline{B_{\ell r}(x_0)}, B_{2r}(x_0)\right),
        $$
        where $\C$ depends only on $\ell$.
        \item[(ii)] Let $\gamma\geq 1$, then
        $$
            \rcapw\left(E, B_{2\gamma r}(x_0)\right)\leq\rcapw\left(E, B_{2r}(x_0)\right)\leq\C\rcapw\left(E, B_{2\gamma r}(x_0)\right),
        $$
        where $\C$ depends only on $\gamma$.
    \end{itemize}
\end{Lemma}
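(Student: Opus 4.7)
The plan is to adapt the classical argument of \cite{HKM}, Lemma 2.16 to the weight $|t|^\theta$, exploiting that $|t|^\theta$ is a Muckenhoupt $A_p$ weight on $\R^{N+1}$ (for $-1<\theta<p-1$), so that $d\mu_\theta=|t|^\theta\,dt\,dx$ is doubling and supports the weighted $(p,p)$-Poincar\'e inequality uniformly on balls of $\R^{N+1}$. Throughout, constants may depend on $\ell$, $\gamma$, $N$, $p$, $\theta$ and on the $A_p$ constant of $|t|^\theta$, but never on $r$ or $x_0$.

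For part (i), the left inequality is immediate by monotonicity of the admissible class: any $v\in C^\infty_0(B_{2r}(x_0))$ with $v\ge 1$ on $\overline{B_r(x_0)}$ also satisfies $v\ge 1$ on $\overline{B_{\ell r}(x_0)}\subset\overline{B_r(x_0)}$. For the right inequality, the plan is to show that both capacities are comparable to the intrinsic quantity $r^{-p}\mu_\theta(B_r(x_0))$. The upper bound
\[
\rcapw\bigl(\overline{B_r(x_0)},B_{2r}(x_0)\bigr)\le C\,r^{-p}\,\mu_\theta(B_{2r}(x_0))
\]
follows from the explicit radial cutoff $v(z)=\eta(|z-x_0|/r)$ with $\eta\in C^\infty_c([0,2))$ equal to $1$ on $[0,1]$ and $|\eta'|\le 2$. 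The matching lower bound
\[
\rcapw\bigl(\overline{B_{\ell r}(x_0)},B_{2r}(x_0)\bigr)\ge c\,r^{-p}\,\mu_\theta(B_{\ell r}(x_0))
\]
is obtained by applying the weighted $(p,p)$-Poincar\'e inequality in $B_{2r}(x_0)$ to any admissible $v$ (which vanishes on $\partial B_{2r}(x_0)$), yielding
\[
\mu_\theta\bigl(\overline{B_{\ell r}(x_0)}\bigr)\le\int_{B_{2r}(x_0)} v^p\,d\mu_\theta\le C\,r^p\int_{B_{2r}(x_0)} |\nabla v|^p\,d\mu_\theta.
\]
The $A_p$-doubling property then gives $\mu_\theta(B_{2r}(x_0))\le C_\ell\,\mu_\theta(B_{\ell r}(x_0))$, which closes the comparison.

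For part (ii), the left inequality again follows from monotonicity in the ambient domain: extending an admissible function for $(E,B_{2r}(x_0))$ by zero produces an admissible function for $(E,B_{2\gamma r}(x_0))$ with the same Dirichlet energy, so the latter capacity is not larger. For the reverse inequality, the plan is a standard cutoff argument: choose $\psi\in C^\infty_0(B_{2r}(x_0))$ with $\psi\equiv 1$ on $\overline{B_{3r/2}(x_0)}\supset E$ and $|\nabla\psi|\le C/r$. Then for any admissible $v$ for $(E,B_{2\gamma r}(x_0))$, the product $\psi v$ lies in $C^\infty_0(B_{2r}(x_0))$ and is admissible for $(E,B_{2r}(x_0))$. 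The product rule yields
\[
\int |\nabla(\psi v)|^p\,d\mu_\theta\le 2^{p-1}\!\!\int_{B_{2r}(x_0)}|\nabla v|^p\,d\mu_\theta+\frac{C}{r^p}\int_{B_{2r}(x_0)} v^p\,d\mu_\theta,
\]
and the second term is absorbed by applying the weighted Poincar\'e inequality to $v$ on $B_{2\gamma r}(x_0)$, giving $\int v^p\,d\mu_\theta\le C(\gamma r)^p\int|\nabla v|^p\,d\mu_\theta$. Taking the infimum over admissible $v$ yields the claim.

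The main technical point I expect to manage carefully is ensuring that the weighted Poincar\'e inequality holds in the correct global form with constants uniform in $x_0\in\R^{N+1}$, including centers on or near the degenerate hyperplane $\{t=0\}$. This is precisely what the Muckenhoupt $A_p$ property of $|t|^\theta$ guarantees (cf.\ \cite{HKM}, Chapter~20), and it is the reason for working within the $p$-admissible framework throughout the preceding sections; once this is in hand the rest of the argument is the standard cutoff-and-Poincar\'e scheme.
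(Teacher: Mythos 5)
Your proof is correct and follows the standard cutoff-and-Poincar\'e scheme that the paper implicitly relies on; the paper itself gives no proof of Lemma~\ref{lem-capest} but simply cites Lemma~6.2 of \cite{AS} and Lemma~2.16 of \cite{HKM}, both of which use exactly this argument. Your one substantive observation — that the uniform validity of the weighted Friedrichs/Poincar\'e inequality across all centers $x_0$, including those on or near the degenerate hyperplane $\{t=0\}$, rests on the Muckenhoupt $A_p$ property of $|t|^\theta$ and its attendant doubling — is precisely the point that makes the ``weighted invariant'' of the classical lemma go through, so your proof is a complete and correct substitute for the omitted argument.
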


For $a\in\R^N$, $E\subset\R^N$, and $r>0$, we denote
$$
    \psi(a, E, r)=\frac{\rcapb\left(E\cap B_r(a), B_{2r}(a)\right)}{\rcapb\left(B_r(a), B_{2r}(a)\right)}.
$$
Then the {\it uniform fractional $p$-factness} of $E$ means $\psi(a, E, r)\geq C>0$ for $a\in E$ and $0<r<r_0$. The following lemma states that it's sufficient to prove the uniform fractional $p$-factness of boundary points $\partial E$ to conclude the uniform fractional $p$-factness of $E$.
\begin{Lemma}\label{lem-fatbdr}
    If $\psi(x, E, r) \geq \C$ for every $x\in\partial E$ and $0<r<r_0$, then $E$ is uniformly fractional $p$-fat.
\end{Lemma}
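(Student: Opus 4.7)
The plan is to reduce the capacity density estimate at an arbitrary point $x \in E$ to the one assumed at a nearby boundary point, splitting the argument into two regimes depending on $d(x,\partial E)$ relative to $r$. Fix $x \in E$ and $0 < r < r_0/4$. Set $\delta = d(x,\partial E)$. I will treat separately the \emph{deep interior} case $\delta \geq r/4$ and the \emph{boundary-proximal} case $\delta < r/4$, and verify that in both a uniform lower bound on $\psi(x,E,r)$ can be extracted.

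In the deep interior case, the ball $B_{r/4}(x)$ is contained in $E$, so by monotonicity of the Besov capacity
\[
\rcapb\bigl(E \cap B_r(a), B_{2r}(x)\bigr) \geq \rcapb\bigl(B_{r/4}(x), B_{2r}(x)\bigr).
\]
Passing through Lemma \ref{lem-equivalentcap} to the weighted $(N{+}1)$-dimensional capacity and applying Lemma \ref{lem-capest}(i) with $\ell = 1/4$ (together with Lemma \ref{lem-besovcapest} to compare $B_{r/4}$ and $B_r$), one gets
\[
\rcapb\bigl(B_{r/4}(x),B_{2r}(x)\bigr) \simeq \rcapb\bigl(B_r(x),B_{2r}(x)\bigr),
\]
so $\psi(x,E,r)$ is bounded below by an absolute constant.

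In the boundary-proximal case, pick $y \in \partial E$ with $|x-y| = \delta < r/4$. Then the inclusions
\[
B_{r/2}(y) \subset B_r(x) \qquad\text{and}\qquad B_{2r}(x) \subset B_{3r}(y)
\]
hold, and monotonicity in both arguments yields
\[
\rcapb\bigl(E \cap B_r(x), B_{2r}(x)\bigr) \;\geq\; \rcapb\bigl(E \cap B_{r/2}(y), B_{3r}(y)\bigr).
\]
Transferring to the weighted picture and applying Lemma \ref{lem-capest}(ii) (with $\gamma$ of order one) to replace the outer ball $B_{3r}(y)$ by $B_{r}(y)$, then invoking the assumption $\psi(y,E,r/2) \geq C$, one obtains
\[
\rcapb\bigl(E \cap B_{r/2}(y), B_{r}(y)\bigr) \gtrsim \rcapb\bigl(B_{r/2}(y), B_r(y)\bigr) \simeq r^{N-sp},
\]
where the last equivalence is Lemma \ref{lem-besovcapest}. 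Since also $\rcapb(B_r(x),B_{2r}(x)) \simeq r^{N-sp}$, dividing gives $\psi(x,E,r) \gtrsim 1$.

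The main obstacle I anticipate is not any single estimate but the bookkeeping of the comparison constants through the shifts between balls centred at $x$ and balls centred at $y$: one must keep the inner and outer radii of the condensers proportional while changing the centre, so that both Lemma \ref{lem-capest}(i) and Lemma \ref{lem-capest}(ii) apply in their stated form. The threshold $\delta \lessgtr r/4$ is chosen precisely so that the inclusions above hold with room to spare; any fixed fraction strictly less than $1/2$ would work, at the price of tweaking the numerical constants. Once those inclusions are in place, the proof is a chain of monotonicity, doubling of capacity between concentric balls, and Lemma \ref{lem-besovcapest}.
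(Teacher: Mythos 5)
Your proposal is correct and follows essentially the same route as the paper: split on whether $x$ is deep inside $E$ (ball inclusion gives the bound directly) or close to $\partial E$ (shift the condenser to a nearest boundary point and invoke the hypothesis there, using Lemmas \ref{lem-equivalentcap}, \ref{lem-besovcapest}, and \ref{lem-capest} to control the change of centre and radii). The only differences from the paper are cosmetic: the paper uses the threshold $r \lessgtr 2\,d(a,\partial E)$ rather than $d(x,\partial E) \lessgtr r/4$, and it bounds the numerator and denominator of $\psi(a,E,r)$ separately so as to arrive at $\psi(a,E,r)\geq \C\,\psi(b,E,r/2)$, whereas you lower-bound the numerator by $r^{N-sp}$ using Lemma \ref{lem-besovcapest} before dividing; both routes are sound and use the same lemmas.
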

\begin{proof}
    The constant $\C$ below is generic and depends only on $N$, $p$, and $s$.
    
    Let's suppose that $a\in E$ is an arbitrary interior point. We only need to prove that $\psi(a, E, r)\geq \C>0$.
    Let $\rho_0=d(a, \R^N\setminus E)>0$, then we can find $b\in\partial E$ such that $\rho_0=d(a, b)$.
    Then we amount to the following two cases.
    
    \quad{\it Case 1:} $r< 2\rho_0$. Then $\overline{B_{r/2}(a)}\subset E$. In view of Lemma \ref{lem-capest} we have
    \[
    \begin{split}
        \psi(a, E, r)&\geq
        \frac{\rcapb\left(\overline{B_{r/2}(a)}, B_{2r}(a)\right)}{\rcapb\left(\overline{B_{r}(a)}, B_{2r}(a)\right)}
        \overset{\text{Lemma}\ \ref{lem-besovcapest}}{\geq}\frac{\rcapw\left(\overline{\BB_{r/4}(a)}, \BB_{2r}(a)\right)}{\rcapw\left(\overline{\BB_{r}(a)}, \BB_{2r}(a)\right)}\geq \C>0.
    \end{split}
    \]

    \quad{\it Case 2:} $r\geq2\rho_0$. Then we have 
    $\overline{B_{r/2}(b)}\subset\overline{B_{r}(a)}\subset\overline{B_{3r/2}(b)}$, and $\overline{B_{r}(b)}\subset\overline{B_{2r}(a)}\subset\overline{B_{5r/2}(b)}$.
    In view of Lemma \ref{lem-capest} we have
    \[
    \begin{split}
        \rcapb\left(E\cap\overline{B_r(a)}, B_{2r}(a)\right)
        &\overset{\text{Lemma}\ \ref{lem-equivalentcap}}{\simeq}
        \rcapw\left((E\cap\overline{B_r(a)})\times\{0\}, \BB_{2r}(a)\right)\\
        &\geq\rcapw\left((E\cap\overline{B_{r/2}(b)})\times\{0\}, \BB_{2r}(a)\right)\\
        &\geq\rcapw\left((E\cap\overline{B_{r/2}(b)})\times\{0\}, \BB_{5r/2}(b)\right)\\
        &\geq \C\rcapw\left((E\cap\overline{B_{r/2}(b)})\times\{0\}, \BB_{r}(b)\right)\\
        &\simeq\rcapb\left(E\cap\overline{B_{r/2}(b)}, B_{r}(b)\right).
    \end{split}
    \]
    In the meanwhile, we also have
    \[
    \begin{split}
        \rcapb\left(\overline{B_r(a)}, B_{2r}(a)\right)&\overset{\text{Lemma}\ \ref{lem-besovcapest}}{\leq}\rcapw\left(\overline{\BB_r(a)}, \BB_{2r}(a)\right)
        \leq\rcapw\left(\overline{\BB_{3r/2}(b)}, \BB_{2r}(a)\right)\\
        &\overset{\text{Lemma}\ \ref{lem-capest}}{\leq} \C\rcapw\left(\overline{\BB_{r/2}(b)}, \BB_{2r}(b)\right)\leq \C \rcapw\left(\overline{\BB_{r/2}(b)}, \BB_{r}(b)\right)\\
        &\overset{\text{Lemma}\ \ref{lem-capest}}{\leq}\C\rcapw\left(\overline{\BB_{r/3}(b)}, \BB_{r}(b)\right)\\
        &\overset{\text{Lemma}\ \ref{lem-besovcapest}}{\leq}\C \rcapb\left(\overline{B_{r/2}(b)}, B_{r}(b)\right).
    \end{split}
    \]
    Therefore, we have
    $$
        \psi(a, E, r)\geq \C\psi(b, E, r/2)\geq \C>0,
    $$
    which yields the desired results.
    
\end{proof}

\begin{Lemma}\label{lem-fatequal}
    Denote by $\Omega^c$ as $\R^N\setminus\Omega$.
    Let $0<r<r_0$, and let $a\in\Omega^c$. Then the uniformly fractional $p$-fat of $\R^N\setminus\Omega$ in $\R^N$ is equivalent to the uniformly $|t|^\theta$-weighted $p$-fat of $\partial\Omega^*$ in $\R^{N+1}$, that is,
    $$
        \frac{\rcapb\left(\Omega^c\cap B_r(a), B_{2r}(a)\right)}{\rcapb\left(B_r(a), B_{2r}(a)\right)}\geq \C_1\Leftrightarrow
        \frac{\rcapw\left(\partial\Omega^*\cap \BB_r(a), \BB_{2r}(a)\right)}{\rcapw\left(\BB_r(a), \BB_{2r}(a)\right)}\geq \C_2,
    $$
    where $\C_1$ and $\C_2$ depend only on $N$, $s$, and $p$.
\end{Lemma}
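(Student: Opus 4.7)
The plan is to derive the lemma as a direct translation of the uniform fatness condition through the Caffarelli--Silvestre correspondence, using Lemma~\ref{lem-equivalentcap} for the numerators and Lemma~\ref{lem-besovcapest} together with Lemma~\ref{lem-capest}(i) for the denominators. In the extension setup the boundary $\partial\Omega^*$ of the domain $\R^{N+1}\setminus(\Omega^c\times\{0\})$ equals $\Omega^c\times\{0\}$, and the elementary set identity
$$
(\Omega^c\times\{0\})\cap\BB_r(a) \;=\; (\Omega^c\cap B_r(a))\times\{0\}
$$
(identifying $a\in\R^N$ with $(a,0)\in\R^{N+1}$) lets me replace the trace set $\partial\Omega^*\cap\BB_r(a)$ by the flat trace $(\Omega^c\cap B_r(a))\times\{0\}$.

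For the numerator, I would apply Lemma~\ref{lem-equivalentcap} to the Borel set $E:=\Omega^c\cap B_r(a)\subset\overline{B_r(a)}$ to obtain
$$
\rcapb(\Omega^c\cap B_r(a), B_{2r}(a)) \;\simeq\; \rcapw\bigl((\Omega^c\cap B_r(a))\times\{0\}, \BB_{2r}(a)\bigr),
$$
with comparison constants depending only on $N$, $s$, $p$. For the denominator, Lemma~\ref{lem-besovcapest} applied with $\rho=r$ sandwiches $\rcapb(\overline{B_r(a)}, B_{2r}(a))$ between $\rcapw(\BB_{r/2}(a), \BB_{2r}(a))$ and $\rcapw(\BB_r(a), \BB_{2r}(a))$; Lemma~\ref{lem-capest}(i) with $\ell=1/2$ then shows that these two weighted capacities are themselves comparable. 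The last statement of Lemma~\ref{lem-besovcapest} identifies all three quantities with $r^{N-sp}$ up to universal constants, so the denominators on the two sides of the claimed equivalence agree up to multiplicative constants depending only on $N$, $s$, $p$.

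Combining the numerator and denominator comparisons yields equivalence of the two ratios themselves, which is precisely the equivalence of uniform fatness with constants $\C_1$ and $\C_2$ of the same dependence. The argument is essentially a bookkeeping translation between the Besov and weighted capacity theories; the only point needing care is the interpretation of $\partial\Omega^*$ as $\Omega^c\times\{0\}$, consistent with its role as the carrier of Dirichlet data for the Caffarelli--Silvestre extension, together with the harmless distinction between open and closed inner balls in the various capacity formulations. There is no serious analytic obstacle beyond the capacity equivalences already established earlier in the note.
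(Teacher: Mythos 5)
Your proof is correct and uses the same three lemmas the paper relies on (Lemma~\ref{lem-equivalentcap} for the numerators, Lemma~\ref{lem-besovcapest} and Lemma~\ref{lem-capest} for the denominators), so it is essentially the same argument. Your organization is a touch cleaner: you establish two-sided comparability of the numerators and of the denominators at the fixed scale $r$ and then divide, whereas the paper treats one implication as immediate and proves the converse through a chain of one-sided estimates at a shrunken radius $\ell r$, also invoking Lemma~\ref{lem-capest}(ii) to move between outer balls $\BB_{2r}$ and $\BB_{2\ell r}$.
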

\begin{proof}
    It's obvious that uniformly $|t|^\theta$-weighted $p$-fat in $\R^{N+1}$ implies the uniformly fractional $p$-fat in $\R^N$ by Lemma \ref{lem-equivalentcap}, Lemma \ref{lem-besovcapest}, and the fact that $\partial\Omega^*\cap\BB_r(a)=(\Omega^c\cap B_r(a))\times\{0\}$.

    Let $\ell\in(0,1)$ be fixed. Then in the other direction, we notice that
    \[
    \begin{split}
        \rcapw\left(\partial\Omega^*\cap\BB_r(a), \BB_{2r}(a)\right)
        &\geq\rcapw\left(\partial\Omega^*\cap\BB_{\ell r}(a), \BB_{2r}(a)\right)\\
        &\overset{\text{Lemma}\ \ref{lem-capest}}{\simeq}
        \rcapw\left((\partial\Omega^*\cap\BB_{\ell r}(a), \BB_{2\ell r}(a)\right)\\
        &\overset{\text{Lemma}\ \ref{lem-equivalentcap}}{\simeq}
        \rcapb\left(\Omega^c\cap B_{\ell r}(a), B_{2\ell r}(a)\right).
    \end{split}
    \]
    In the meanwhile, we have
    \[
    \begin{split}
        \rcapw\left(\BB_r(a), \BB_{2r}(a)\right)&\overset{\text{Lemma}\ \ref{lem-capest}}{\leq}\C\rcapw\left(\BB_{\ell r}(a), \BB_{2r}(a)\right)\\
        &\overset{\text{Lemma}\ \ref{lem-capest}}{\leq}\C\rcapw\left(\BB_{\ell r}(a), \BB_{2\ell r}(a)\right)\\
        &\overset{\text{Lemma}\ \ref{lem-equivalentcap}}{\simeq}\rcapb\left(B_{\ell r}(a), B_{2\ell r}(a)\right).
    \end{split}
    \]
    Then by combining the two estimates above we get the other direction.
\end{proof}
 
We also need the following estimates on the modulus of continuity of solutions to Dirichlet problems. For the classical case one can refer to \cite{Mazya2}, and for the non-local case, one can refer to the recent result in \cite{Li}.
\begin{Lemma}[\cite{AS}, Lemma 6.4]\label{lem-mod-cont}
    Let $a\in\partial\Omega$, let $r>0$ be fixed. Let $u$ be the $p$-potential for $\overline{\BB_r(a)}\setminus\Omega^*$ with respect to $\BB_{5r}(a)$. Then
    $$
        1-u(x)\leq\exp\left(-C\int^r_\rho\psi(a,\R^{N+1}\setminus\Omega^*,t)^{1/(p-1)}\frac{\,dt}{t}\right)\quad\text{for}\ 0<\rho\leq r\ \text{and}\ x\in \BB_\rho(a).
    $$
\end{Lemma}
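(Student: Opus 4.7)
The plan is to execute a Wiener-type iteration on dyadic concentric balls centered at $a$, following the classical scheme of Maz'ya and its adaptation to the weighted setting in [AS, Lemma 6.4]. Set $r_k=2^{-k}r$ and $M_k=\sup_{\BB_{r_k}(a)}(1-u)$. Since $u$ is the $p$-potential of $\overline{\BB_r(a)}\setminus\Omega^*$ with respect to $\BB_{5r}(a)$, we have $u\equiv 1$ quasi-everywhere on $\overline{\BB_r(a)}\setminus\Omega^*$ and $0\le u\le 1$ by the comparison principle, so $1-u$ is a nonnegative $p$-subharmonic function in $\Omega^*\cap\BB_{5r}(a)$ which vanishes q.e.\ on $\overline{\BB_r(a)}\setminus\Omega^*$. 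In particular $M_0\le 1$, and $M_k$ is monotone non-increasing in $k$.

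The heart of the argument is a one-step decay: there exists $C>0$, depending only on $N$, $p$, $s$, and the Muckenhoupt constant of $|t|^\theta$, such that
\[
M_{k+1}\le\Bigl(1-C\,\psi(a,\R^{N+1}\setminus\Omega^*,r_k)^{1/(p-1)}\Bigr)M_k.
\]
To prove this, I would normalize and consider $w_k=(M_k-(1-u))/M_k$ on $\BB_{r_k}(a)$: it is nonnegative, $p$-superharmonic in $\Omega^*\cap\BB_{r_k}(a)$, bounded by $1$, and equals $1$ q.e.\ on $\BB_{r_k}(a)\setminus\Omega^*$. A weighted capacitary weak-minimum estimate (in the spirit of Maz'ya's inequality, available in the Newtonian-space framework pointed out in Remark \ref{rem:wsneq}) then produces a lower bound of the form $w_k\ge C\,\psi_k^{1/(p-1)}$ on $\BB_{r_{k+1}}(a)$, where $\psi_k=\psi(a,\R^{N+1}\setminus\Omega^*,r_k)$. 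This reads, after unwinding the normalization, exactly as the claimed one-step decay. The capacity equivalences $\rcapw\simeq\rcapb$ (Lemma \ref{lem-equivalentcap}) together with Lemma \ref{lem-capest} are used to pass between reference balls without losing constants, and Lemma \ref{lem-fatequal} allows us to work with $\psi$ on the original $\R^N$-side when convenient.

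Iterating the one-step estimate and using $1-s\le e^{-s}$ yields
\[
M_K\le\exp\!\Bigl(-C\sum_{k=0}^{K-1}\psi_k^{1/(p-1)}\Bigr).
\]
The discrete sum is then compared with the Wiener integral via the dyadic bound
\[
\sum_{k=0}^{K-1}\psi_k^{1/(p-1)}\ge C\int_{r_K}^{r}\psi(a,\R^{N+1}\setminus\Omega^*,t)^{1/(p-1)}\,\frac{dt}{t},
\]
which holds because $t\mapsto\psi(a,\cdot,t)$ is essentially constant (up to multiplicative constants) on each dyadic annulus $[r_{k+1},r_k]$, a consequence of the doubling and quasi-additivity of the weighted capacity. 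Choosing $K$ so that $r_K\le\rho<r_{K-1}$ and noting that $\BB_\rho(a)\subset\BB_{r_{K-1}}(a)$ concludes the proof for every $x\in\BB_\rho(a)$.

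The main obstacle is the weighted capacitary weak-minimum estimate underlying the one-step decay, as it is the ingredient most sensitive to the change from the unweighted to the $|t|^\theta$-weighted framework. Classically this estimate is obtained by combining a truncation argument with the Poincar\'e inequality and a comparison of the Dirichlet energy with the capacity of the level set; each of these tools is available here since $|t|^\theta$ is $A_p$-Muckenhoupt and the resulting space fits into the Newtonian-space setting (Remark \ref{rem:wsneq}). Consequently, the structural proof of [AS, Lemma 6.4] transfers essentially verbatim, and the remaining work is careful bookkeeping of constants through the weighted capacity equivalences recorded earlier in Section 2.
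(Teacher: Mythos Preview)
The paper does not supply its own proof of this lemma; it simply records it as a citation of \cite{AS}, Lemma~6.4 (with a pointer to \cite{Mazya2} for the classical case and \cite{Li} for a nonlocal variant). Your sketch reproduces exactly the Maz'ya-type dyadic Wiener iteration that underlies the cited result in \cite{AS}: normalize $1-u$, obtain a one-step capacitary decay via the weighted weak-minimum/Poincar\'e estimate, iterate, and compare the dyadic sum with the Wiener integral. This is the same approach the paper is invoking by reference, so there is nothing to contrast. One minor remark: the appeals to Lemmas~\ref{lem-equivalentcap} and~\ref{lem-fatequal} are not actually needed here, since the whole argument lives in the $|t|^\theta$-weighted $\R^{N+1}$ setting and no passage to the Besov capacity on $\R^N$ is required; only the intrinsic doubling/Poincar\'e properties of the $A_p$ weight (via Remark~\ref{rem:wsneq}) and Lemma~\ref{lem-capest} enter.
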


\medskip

Now we give the proof of Lemma \ref{lem-capden-lhmd}.
\begin{proof}[Proof of Lemma \ref{lem-capden-lhmd}]
    Firstly let's suppose that $\R^N\setminus\Omega$ is uniformly $s$-fat, then $\left(\R^N\setminus\Omega\right)\times\{0\}$ is uniformly $|t|^\theta$-fat in $\R^{N+1}$ by Lemma \ref{lem-fatequal}. Let $a\in\partial\Omega\subset\partial\Omega^*$, $0<r<r_0$. 
    Let $u$ be the potential for $\overline{\BB_{r/5}(a)}\setminus\Omega^*$ with respect to $\BB_r(a)$. Hence $u=0$ q.e. on $\R^{N+1}\setminus\BB_r(a)$. Then by the comparison principle, we have
    $$
        \omega\left(\R^N\setminus B_r(a), (\Omega\cap B_r(a))^*; |t|^\theta\right)\leq 1-u\quad\text{on}\ \Omega^*\cap \BB_r(a),
    $$
    and then by Lemma \ref{lem-mod-cont} we can forward as
    \[
    \begin{split}
        \omega\left(\R^N\setminus B_r(a), (\Omega\cap B_r(a))^*; |t|^\theta\right)&\leq 1-u(x)\\
        &\leq C\left(\frac{\rho}{r/5}\right)^\delta\quad\text{for}\ x\in \BB_\rho(a)\ \text{and}\ 0<\rho\leq r/5, 
    \end{split}
    \]
    in which $\delta$ depends only on $C_0$ (see Definition \ref{def:capdensity}). Then we get that $\LHMD^{|t|^\theta}_\sigma$ holds on $\BB_{r/5}(a)\cap\Omega^*$, and so $\LHMDs_\sigma$ holds on $B_{r/5}(a)\cap\Omega$.

    Now let's suppose that $\LHMDs_\sigma$ holds on $\Omega$ for some $\sigma>0$. Thanks to Lemma \ref{lem-fatbdr} we just need to prove that $\psi(a, \R^N\setminus\Omega, r)\geq C>0$ holds for every $a\in\partial\Omega$ and $0<r<r_0$. Now let's fix $a\in\partial\Omega$, and some $r\in(0,r_0)$. Then let function $v$ be the $|t|^\theta$-potential function for $\overline{\BB_r(a)}\setminus\Omega^*$ (i.e. $\overline{B_r(a)}\setminus\Omega$) with respect to $\BB_{2r}(a)$. Then by the comparison principle we have
    $$
        \omega\left(\R^N\setminus B_r(a), (B_r(a))^*; |t|^\theta\right)\geq 1-v\quad\text{on}\ \Omega^*\cap \BB_r(a)\supsetneqq\Omega\cap B_r(a),
    $$
    and then by $\LHMDs_\sigma$, we have
    for some $C_8>1$ such that
    $$
        \omega\left(\R^N\setminus B_r(a), (B_r(a))^*; |t|^\theta\right)\leq 1/2 \quad\text{on}\ \Omega\cap\overline{B_{r/C_8}(a)}.
    $$
    Consequently, we have $v\geq 1/2$ on $\Omega\cap\overline{B_{r/C_8}(a)}$. As $v=1$ quasi-everywhere on $\overline{\BB_r(a)}\setminus\Omega^*$, we then have $v\geq 1/2$ quasi-everywhere on $\overline{B_{r/C_8}(a)}$. Hence $2v$ is an admissible function for the relative capacity $\rcap_{|t|^\theta}\left(\overline{B_{r/C_8}(a)}\times\{0\}, \BB_{2r}(a)\right)$. So we have
   \[
   \begin{split}
       \rcap_{B^s_2}\left(\overline{B_r(a)}\setminus\Omega, B_{2r}(a)\right)&\overset{\text{Lemma}\ \ref{lem-equivalentcap}}{\simeq}\rcap_{|t|^\theta}\left(\left(\overline{B_r(a)}\setminus\Omega\right)\times\{0\}, \BB_{2r}(a)\right)\\
       &=\int_{\BB_{2r}(a)}|\nabla (2v)|^2|t|^\theta\,dxdt\\
       &\geq 4\rcap_{|t|^\theta}\left(\overline{B_{r/C_8}(a)}\times\{0\}, \BB_{2r}(a)\right)\\
       &\overset{\text{Lemma}\ \ref{lem-equivalentcap}}{\simeq}C^\prime\rcap_{B^s_2}\left(\overline{B_{r/C_8}(a)}, B_{2r}(a)\right)\\
       &\overset{\text{Lemma}\ \ref{lem-besovcapest}}{\geq}C^\prime\rcap_{|t|^\theta}\left(\overline{\BB_{r/(2C_8)}(a)}, \BB_{2r}(a)\right).
   \end{split}
   \]
    Therefore, we have 
    \[
    \begin{split}
        \psi(a, \R^N\setminus\Omega,r)=\frac{\rcap_{B^s_2}\left(\overline{B_r(a)}\setminus\Omega, B_{2r}(a)\right)}{\rcap_{B^s_2}\left(\overline{B_r(a)}, B_{2r}(a)\right)}
        &\overset{\text{Lemma}\ \ref{lem-besovcapest}}{\geq}
        C^\prime\frac{\rcap_{|t|^\theta}\left(\overline{\BB_{r/(2C_8)}(a)}, \BB_{2r}(a)\right)}{\rcap_{|t|^\theta}\left(\overline{\BB_r(a)}, \BB_{2r}(a)\right)}\\
        &\geq C>0,
    \end{split}
    \]
    which yields the desired result.
    
\end{proof}

Now we are in the right position to give the proof of Proposition \ref{prop-lebeguetocap}.
\begin{proof}
    In fact, the proof is just a combination of Corollary 2.6 in \cite{AS}, Lemma \ref{lem-fatequal}, and Theorem \ref{thm:main2}.
\end{proof}

\medskip



\begin{thebibliography}{99}


\bibitem{Aikawa} Aikawa H. Hölder continuity of the Dirichlet solution for a general domain. Bulletin of the London Mathematical Society. 2002 Nov;34(6):691-702.

\bibitem{AS} Aikawa H, Shanmugalingam N. H\"older estimates of p-harmonic extension operators. Journal of Differential Equations. 2006 Jan 1;220(1):18-45.

\bibitem{BFR} Barrios B, Figalli A, Ros-Oton X. Global regularity for the free boundary in the obstacle problem for the fractional Laplacian. American Journal of Mathematics, 2018, 140(2): 415-447.

\bibitem{BB1} Bj\"orn A, Bj\"orn J. Boundary regularity for p-harmonic functions and solutions of the obstacle problem on metric spaces. Journal of the Mathematical Society of Japan, 2006, 58(4): 1211-1232.

\bibitem{BB} Bj\"orn A, Bj\"orn J. Nonlinear potential theory on metric spaces. European Mathematical Society; 2011.

\bibitem{Bjorn} Bj\"orn, J., Boundary estimates and a Wiener criterion for the fractional Laplacian, arXiv:2107.04364.



\bibitem{BMS} Bj\"orn J, MacManus P, Shanmugalingam N. Fat sets and pointwise boundary estimates for $p$-harmonic functions in metric spaces. Journal d’Analyse Math\'ematique. 2001 Dec;85:339-69. 

\bibitem{Bogdan} Bogdan K. The boundary Harnack principle for the fractional Laplacian. Studia Mathematica, 1997, 123(1): 43-80.

\bibitem{BLP} Brasco L, Lindgren E, Parini E. The fractional Cheeger problem. Interfaces and Free Boundaries, 2014, 16(3): 419-458.

\bibitem{BLS} Brasco, L, Lindgren E, Schikorra A. Higher H\"older regularity for the fractional $p$-Laplacian in the superquadratic case. Advances in Mathematics 2018, (338): 782-846.


\bibitem{CSS} Caffarelli L A, Salsa S, Silvestre L. Regularity estimates for the solution and the free boundary of the obstacle problem for the fractional Laplacian[J]. Inventiones Mathematicae, 2008, 171: 425-461.

\bibitem{CS} Caffarelli L, Silvestre L. An extension problem related to the fractional Laplacian. Communications in partial differential equations. 2007 Aug 8;32(8):1245-60.


\bibitem{CaSi} Caffarelli, L., Sire, Y. (2018). Bounds on the Green function for integral operators and fractional harmonic measure with applications to boundary Harnack. Proceedings of the American Mathematical Society, 146(3), 1207-1216.

\bibitem{GZ} Gariepy R, Ziemer WP. A regularity condition at the boundary for solutions of quasilinear elliptic equations. Archive for Rational Mechanics and Analysis. 1977 Mar;67(1):25-39.




\bibitem{HKM} Heinonen J, Kipel\"ainen T, Martio O. Nonlinear potential theory of degenerate elliptic equations. Courier Dover Publications; 2018 May 16.


 
\bibitem{ILPS} Iannizzotto A, Liu SB, Perera K, Squassina M. Existence results for fractional p-Laplacian problems via Morse theory. Advances in Calculus of Variations 9.2 (2016): 101-125.

\bibitem{IMS} Iannizzotto A, Mosconi SJ, Squassina M. Global H\"older regularity for the fractional $p$-Laplacian. Revista Matem\'atica Iberoamericana. 2016 Dec 16;32(4):1353-92.

\bibitem{KLL} Kim M, Lee KA, Lee SC. The Wiener criterion for non-local Dirichlet problems. Communications in Mathematical Physics. 2023 Jan 12:1-43.

\bibitem{KS} Kinnunen J, Shanmugalingam N. Regularity of quasi-minimizers on metric spaces. manuscripta mathematica. 2001 Jul;105:401-23.


\bibitem{KKP} Korvenp\"a\"a J, Kuusi T, Palatucci G. Fractional superharmonic functions and the Perron method for nonlinear integro-differential equations. Mathematische Annalen. 2017 Dec;369(3):1443-89.



\bibitem{Lewis} Lewis JL. Uniformly fat sets. Transactions of the American Mathematical Society. 1988;308(1):177-96.

\bibitem{Li} Li, F. Asymptotic behavior of nonlocal $p$-Rayleigh quotients. ArXiv. /abs/1907.08032v3. 2022

\bibitem{LZLH} Lian Y, Zhang K, Li D, Hong G. Boundary H\"older regularity for elliptic equations. Journal de Math\'ematiques Pures et Appliqu\'ees. 2020 Nov 1;143:311-33.


\bibitem{LL} Lindgren E, Lindqvist P. Perron’s Method and Wiener’s Theorem for a non-local Equation. Potential Analysis. 2017 May;46(4):705-37.



\bibitem{Mazya} Maz'ya, V. G., Regularity at the boundary of solutions of elliptic equations and conformal mapping, Dokl. Akad. Nauk SSSR {\bf 150}, 1297-1300 (1963) (Russian). English transl.: Soviet Math. Dokl. {\bf 4}, 1547-1551 (1963)



\bibitem{Mazya2} Maz'ya, V. G., On the continuity at a boundary point of solutions of quasi-linear elliptic equations, Vestnik Leningrad. Univ. Mat. Mekh. Astronom. {\bf 25} 42-55 (1970) (Russian). English transl.: Vestnik Leningrad. Univ. Math. {\bf 3}, 225-242 (1976)

\bibitem{Mazya3} Maz'ya V. Sobolev Spaces: with Applications to Elliptic Partial Differential Equations. Springer Science $\&$ Business Media; 2011 Feb 11.

\bibitem{RS} Ros-Oton X, Serra J. The Dirichlet problem for the fractional Laplacian: regularity up to the boundary. Journal de Math\'ematiques Pures et Appliqu\'ees. 2014 Mar 1;101(3):275-302.

\bibitem{RS1} Ros-Oton X, Serra J. The Pohozaev identity for the fractional Laplacian. Archive for Rational Mechanics and Analysis 213 (2014): 587-628.

\bibitem{RS2} Ros-Oton X, Serra J. BOUNDARY REGULARITY FOR FULLY NONLINEAR INTEGRO-DIFFERENTIAL EQUATIONS. DUKE MATHEMATICAL JOURNAL. 2016;165(11).

\bibitem{Shanmug} Shanmugalingam N. Personal communication.

\bibitem{Wu} Wu J M. Harmonic measures for symmetric stable processes[J]. Studia Mathematica, 2002, 149(3): 279-291.





\end{thebibliography}
\end{document}